\documentclass[11pt,leqno]{article}
\usepackage{amsmath}
\usepackage{amsthm}
\usepackage{amstext}
\usepackage{amsopn}
\usepackage{texdraw}
\usepackage{graphicx}
\oddsidemargin 0in \topmargin 0in \textwidth 6.2in \textheight 8.4in %9.2in
\baselineskip=20pt
\parskip=2mm
\parindent=20pt

\newtheorem{theorem}{Theorem}[section]
\newtheorem{lemma}[theorem]{Lemma}

\theoremstyle{definition}
\newtheorem{definition}[theorem]{Definition}
\newtheorem{example}[theorem]{Example}

\theoremstyle{remark}
\newtheorem{remark}[theorem]{Remark}

% searching program: ZhifuSearch2012_36;[A V Date path1 Var]=ZhifuPlay2012_36(M, x, phi);[Var path]=ZhifuVar2012_36(x, M, phi)
%Stability: [yy]=JALM4dyVm; Zhifu_Stability; creat full path for stability, CrSPVmass.m case 1 (2).
%Action: IM5act; ActVm
\begin{document}
\title{ A continuum of periodic solutions to the planar four-body problem with various choices of masses }
\author{Tiancheng Ouyang \\
 Department of Mathematics, Brigham Young University\\
 Provo, Utah 84602, USA\\
 Email: ouyang@math.byu.edu\\Zhifu Xie %\footnote{Partially supported by RIG Grant (code 2137)
%from Virginia State University 2008-2009. %To appear in the Transactions of American Mathematical Society %and partially supported by an NSF grant 2009-2011.}
\\
Department of Mathematics and Computer Science\\
 Virginia State University\\
Petersburg, Virginia 23806, USA\\
 Email: zxie@vsu.edu \\
 }
\date{}

\maketitle
\begin{abstract}

In this paper, we apply the variational method with the Structural Prescribed Boundary Conditions (SPBC) to prove the existence of periodic and quasi-periodic solutions for planar four-body problem with $m_1=m_3$ and $m_2=m_4$. A path $q(t)$ in $[0,T]$ satisfies SPBC if the boundaries $q(0)\in \mathbf{A}$ and $q(T)\in \mathbf{B}$, where $\mathbf{A}$ and $\mathbf{B}$ are two structural configuration spaces in $(\mathbf{R}^2)^4$ and they depend on a rotation angle $\theta\in (0,2\pi)$ and the mass ratio $\mu=\frac{m_2}{m_1}\in \mathbf{R}^+$. %where $\mathbf{A}=\{[0 , -a_3; -a_1 , a_2; 0 , -2\mu a_2+a_3; a_1 , a_2 ]  R(\theta)|(a_1,a_2,a_3)\in\mathbf{R}^3\},$ $ \mathbf{B}=\{[ a_4,  a_5;0, -a_6; -a_4, a_5; 0, -\frac{2}{\mu}a_5+a_6]|(a_4,a_5,a_6)\in \mathbf{R}^3 $, and $R(\theta)=[\cos(\theta), -\sin(\theta); $ $\sin(\theta), $ $\cos(\theta)]$.

 We show that  there is a region $\Omega\subseteq (0,2\pi)\times R^+$ such that there exists at least one local minimizer of the Lagrangian action functional on the path space satisfying SPBC $\{q(t)\in H^1([0,T],$ $(\mathbf{R}^2)^4)| $  $q(0)\in $ $\mathbf{A}, q(T)\in  $ $\mathbf{B}\}$ for any $(\theta,\mu)\in \Omega$. %The minimizer has lower action than the action of the homographic solutions satisfying SPBC.
 The corresponding minimizing path of the minimizer can be extended to a non-homographic periodic solution if $\theta$ is  commensurable with $\pi$ or  a quasi-periodic  solution if $\theta$ is not commensurable with $\pi$. In the variational method with SPBC, we only impose constraints on boundary and we do not impose any symmetry constraint on solutions. Instead, we prove that our solutions extended from the initial minimizing pathes have the  symmetries. %But variational methods by minimizing action functional over loop space or by using Palais' principle of symmetric criticality have to impose symmetry constraints on solution space.

 The periodic solutions can be further classified as simple choreographic solutions, double choreographic solutions and non-choreographic solutions. Among the many stable simple choreographic orbits,  the most extraordinary one is the stable star pentagon choreographic solution when $(\theta,\mu)=(\frac{4\pi}{5},1)$. Remarkably the unequal-mass variants of the stable star pentagon are just as stable as the basic equal mass choreography (See figure \ref{fig1}). %The fact that unequal-mass variants of the stable start pentagon seem to be just as stable as the basic equal mass choreography makes the beautiful star pentagon orbit all the more remarkable!\\

\end{abstract}
{\bf Key word:} Variational Method, Choreographic Periodic Solutions, Structural Prescribed Boundary Conditions (SPBC), Stability, Central Configurations, $n$-body Problem.\\
{\bf AMS classification number:} 37N05, 70F10, 70F15, 37N30, 70H05, 70F17\\

\section{ Introduction}
%ZhifuSearch2012_36.m;
\begin{figure}
\includegraphics[height=5cm,width=.32\textwidth]{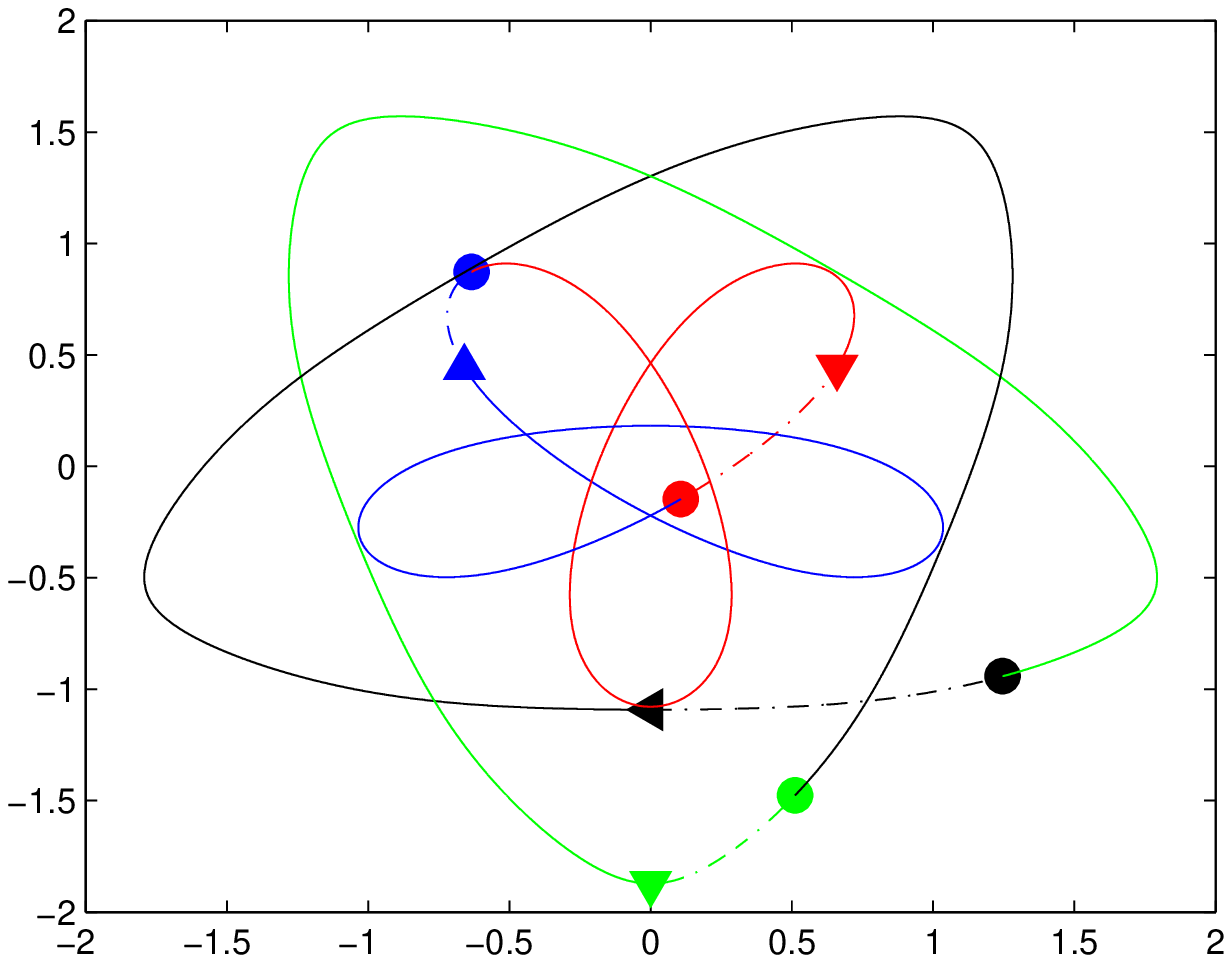}%{Vm03A4f5.eps}
\includegraphics[height=4.5cm,width=.33\textwidth]{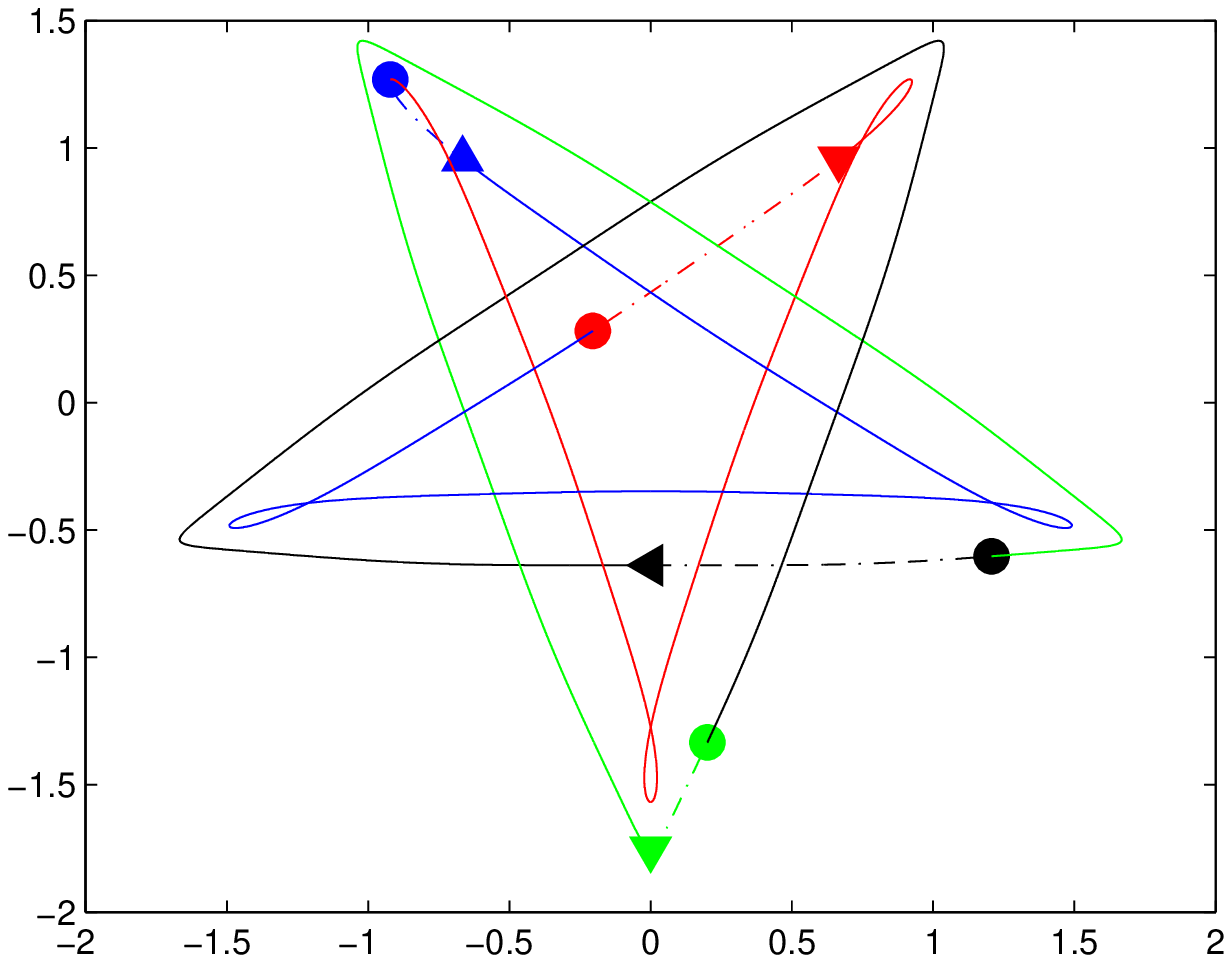}
\includegraphics[height=4.5cm,width=.33\textwidth]{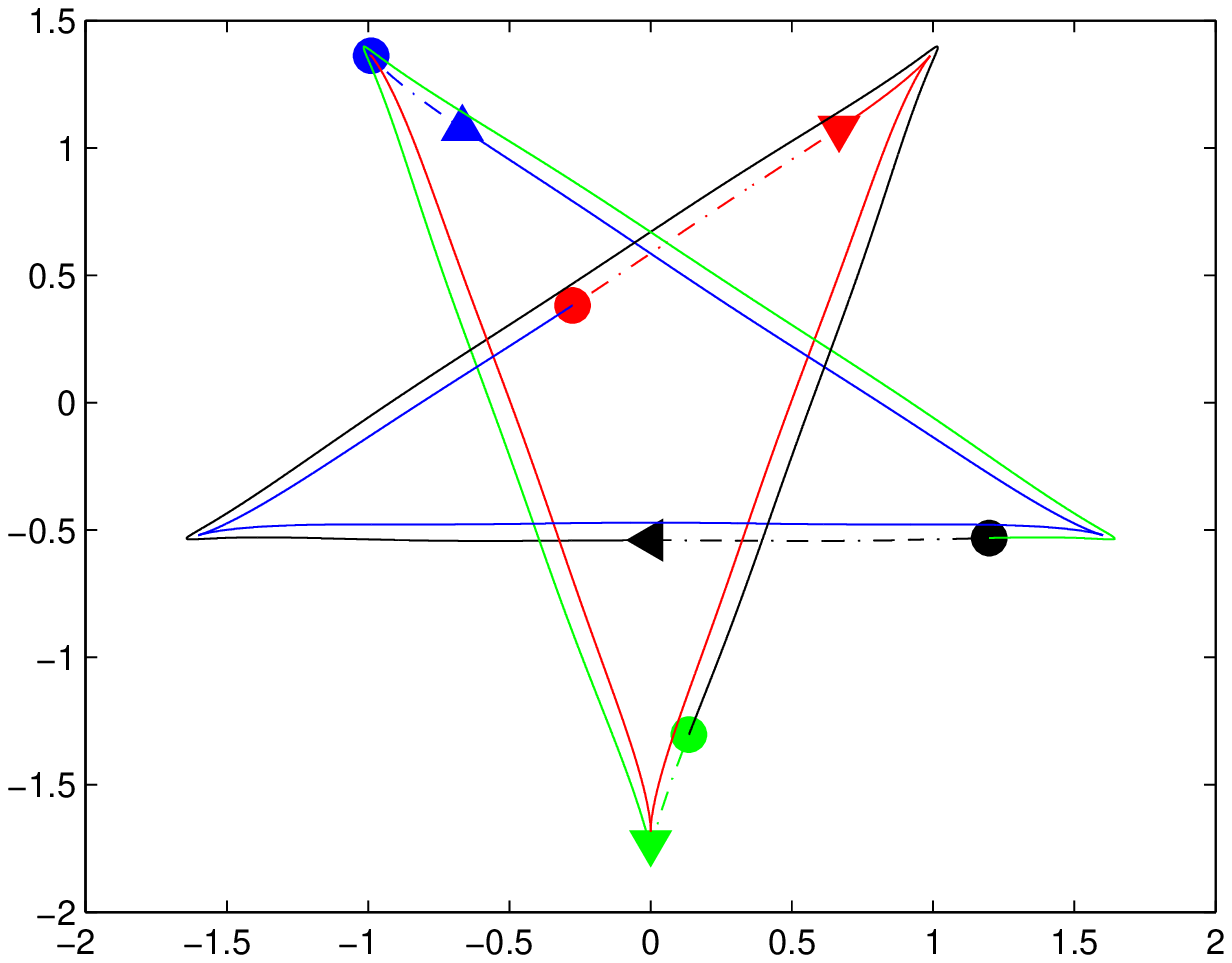}
\includegraphics[height=4.5cm,width=.33\textwidth]{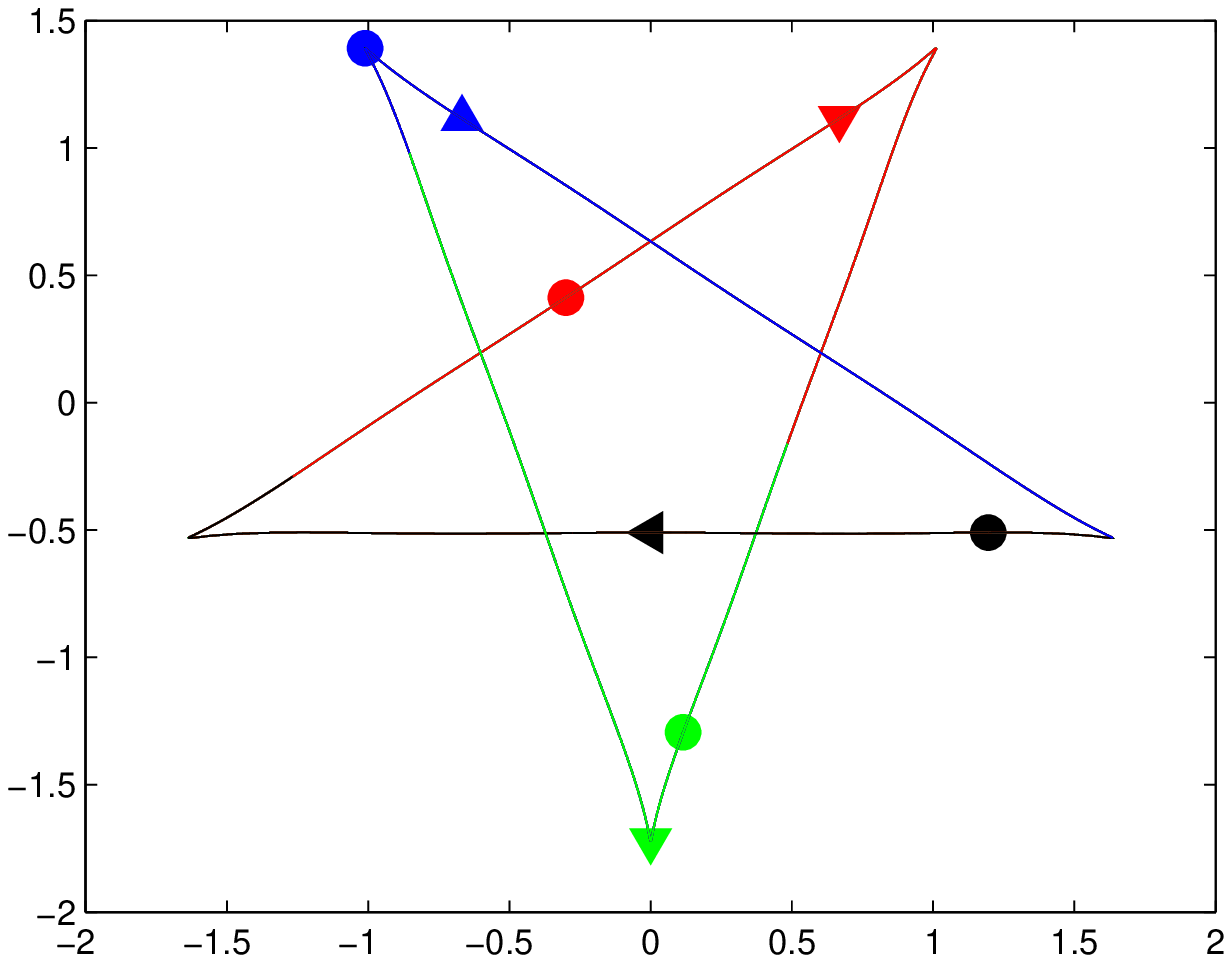}
\includegraphics[height=5cm,width=.32\textwidth]{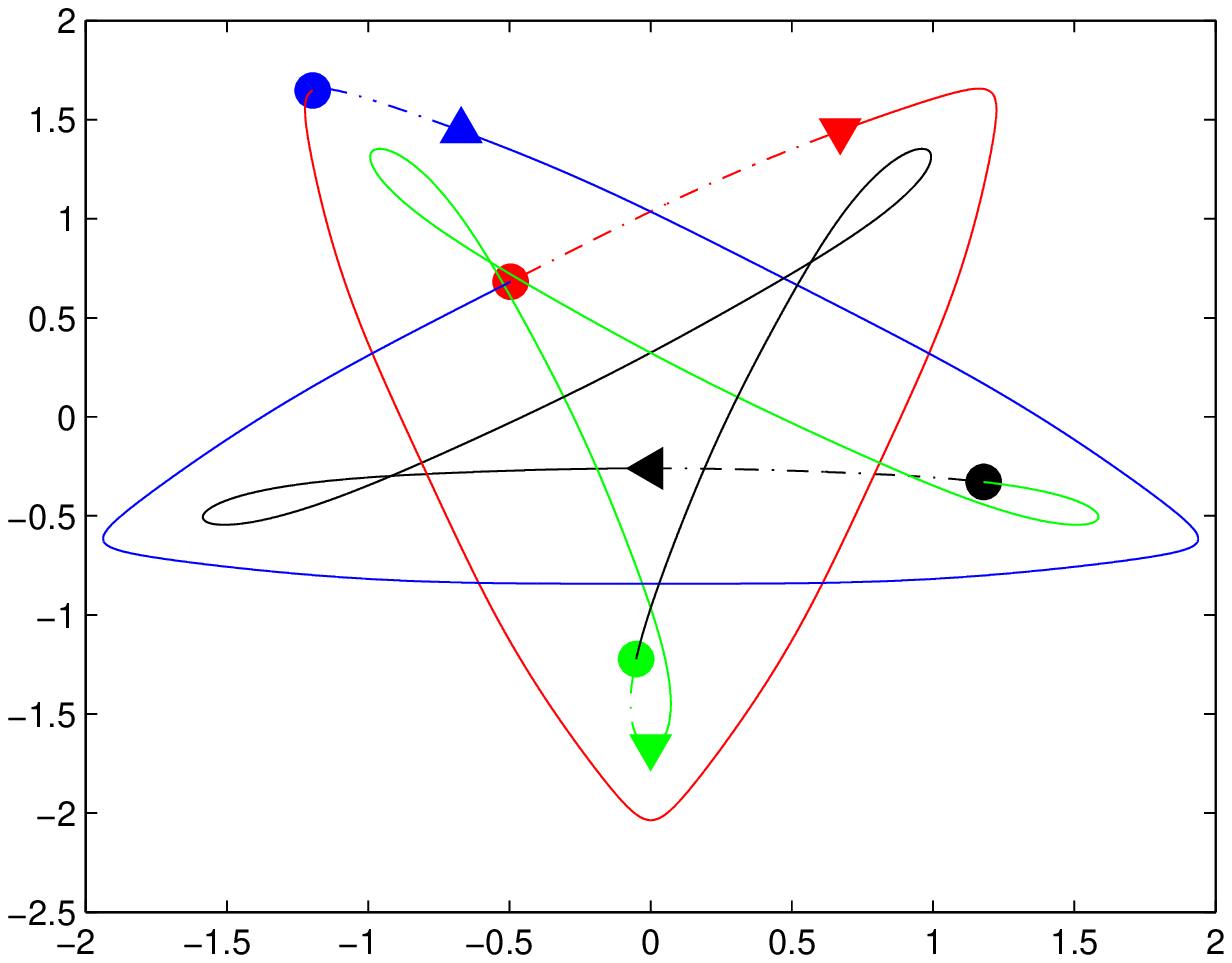}
\includegraphics[height=4.5cm,width=.32\textwidth]{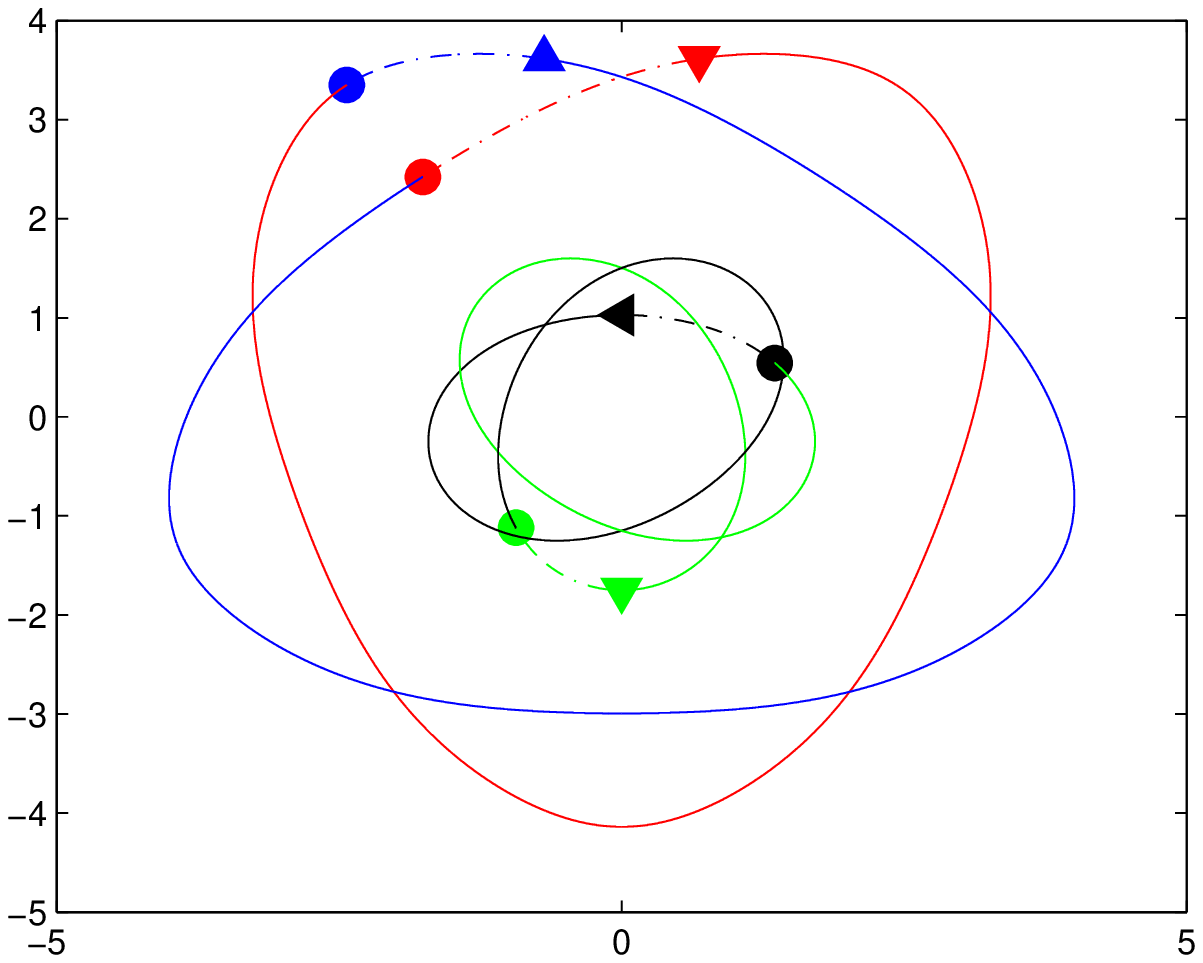}%{Vm5A4f5s.eps}%{Vm1_5A4f5.eps}
\caption{\small $\theta=\frac{4\pi}{5}$, $m_1=m_3=1, m_2=m_4=\mu$. From left to right $\mu=0.3,$ $0.8,$ $0.95 ,$ $ 1,$ $1.5,$ and $10.$ Solutions start from an isosceles triangle $q(0)$ (circular spots) with one in the axis of its symmetry to another isosceles triangle $q(T)$ (triangular spots).    }\label{fig1}\end{figure}
\begin{figure}
\includegraphics[height=5cm,width=.32\textwidth]{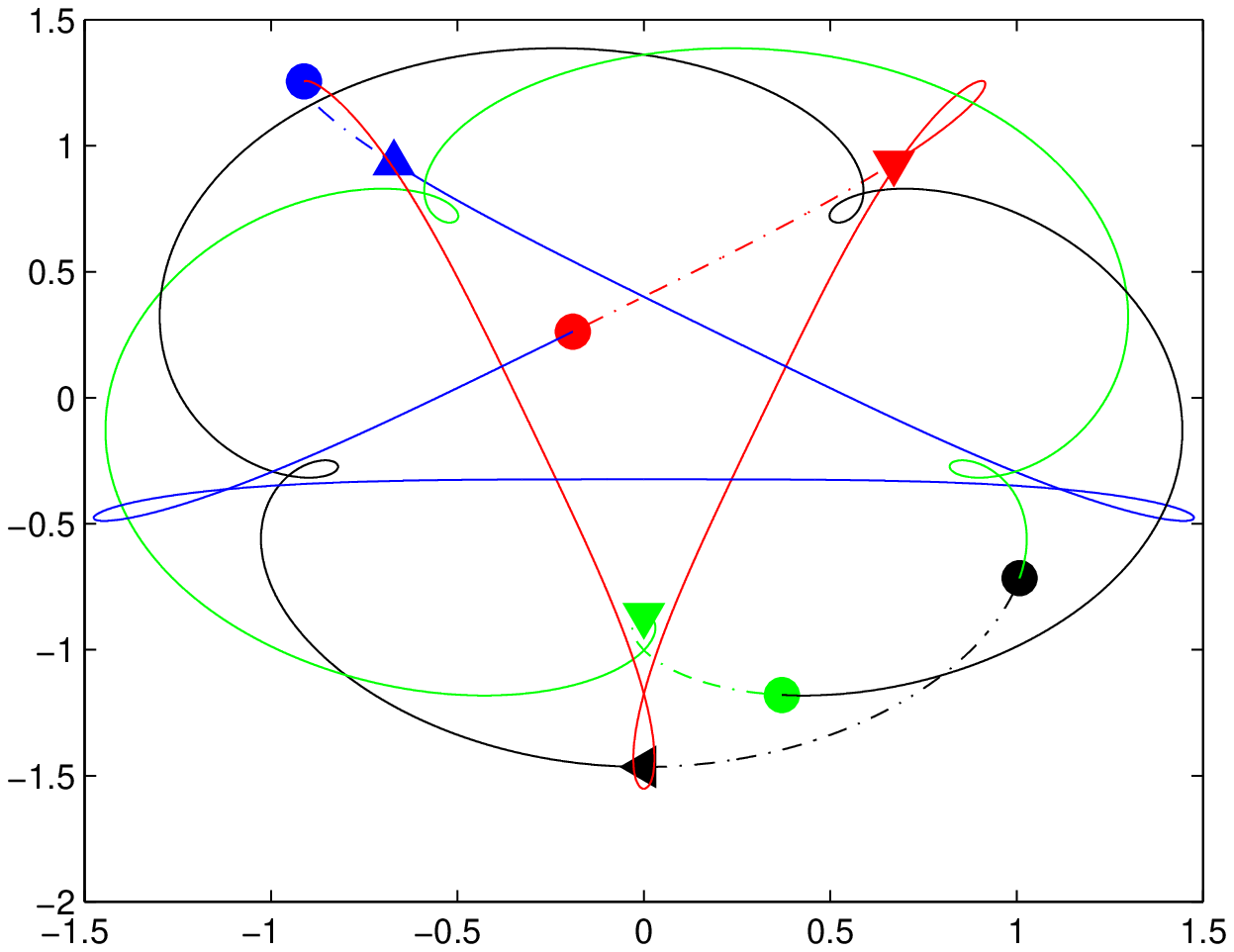}
\includegraphics[height=4.5cm,width=.33\textwidth]{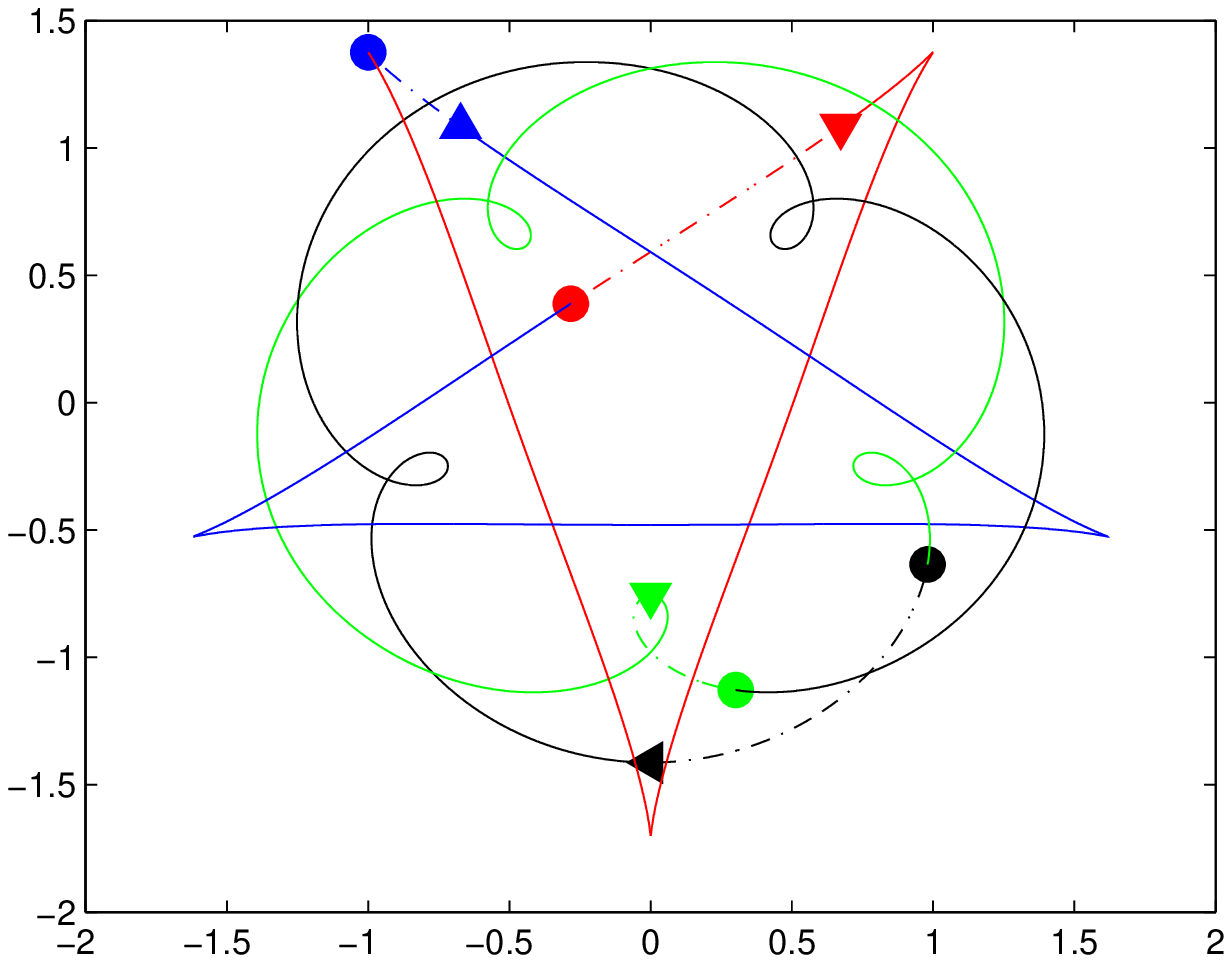}
\includegraphics[height=5cm,width=.32\textwidth]{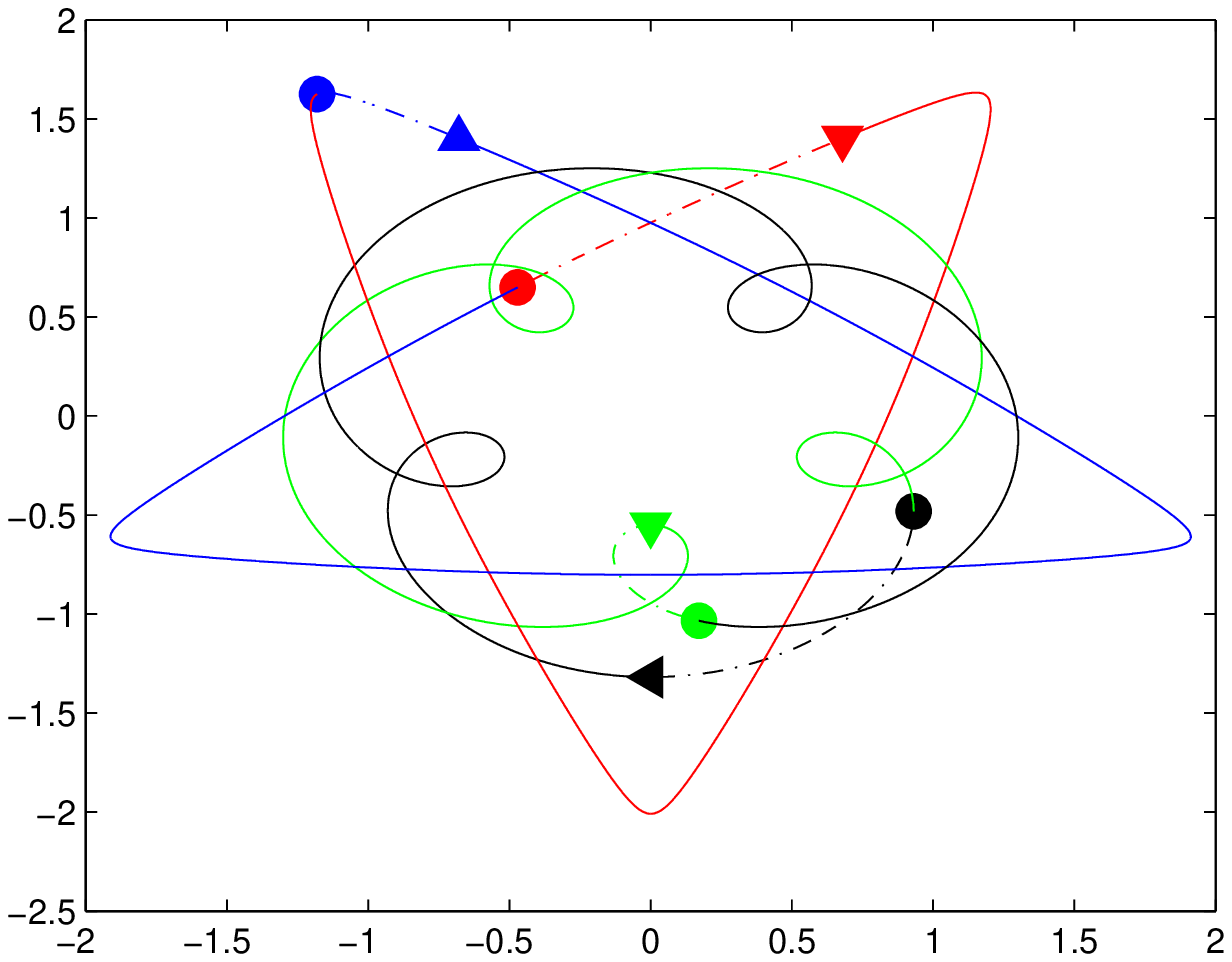}
\caption{\small $\theta=\frac{4\pi}{5}$, $m_1=m_3=1, m_2=m_4=\mu$. From left to right $\mu=0.8,$ $1,$  and $1.5.$ Solutions start from an isosceles triangle $q(0)$ (circular spots) with one in the axis of its symmetry to another isosceles triangle $q(T)$ (triangular spots).    }\label{fig111}\end{figure}

Given $n$ bodies, let $m_i$ denote the mass and $q_i(t)$ denote the position in $\mathbf{R}^d, d\geq 2$ of body $i$ at time $t$ in $d$-dimensional space. The {\it action functional} is a mapping from the space of all trajectories $q_1(t), q_2(t),\cdots, q_n(t)$ into the reals.  It is defined as the integral:% of the kinetic minus the potential energy:
\begin{equation}\label{Action}
\mathcal{A}(q(t))=\int_{0}^{T}K(\dot{q}(t))+U(q(t))dt,
\end{equation}
where $K(\dot{q}(t))=\frac{1}{2}\sum_{i=1}^{n} m_i|\dot{q}_i(t)|^2$ is the kinetic energy and $U$ is the Newtonian
potential function
%\begin{equation}\label{1.1b}
$U=\sum_{1\leq i<j\leq n}\frac{m_i m_j}{|{q}_i-{q}_j|}.$
%\end{equation}
Critical points of the action functional are trajectories that satisfy the equations of motion, i.e. Newton's equations:
\begin{equation}\label{Newton}
m_i\ddot{q}_i=\frac{\partial U}{\partial q_i}=\sum_{j=1,j\not= i}^{n} \frac{m_im_j(q_j-q_i)}{|q_j-q_i|^3} \hspace{1cm} 1\leq i\leq n.
\end{equation}
Without loss of generality, we assume that the center of mass $c=(1/M)\sum_{i=1}^{n}m_iq_i$ is always at the origin, where $M=\sum m_i$ is the total mass. Let $p_i=m_i\dot{q}_i$. Then the Hamiltonian of the Newton's equations is
\begin{equation}\label{Hamilton}
H(q,p)=\sum_{i=1}^{n}\frac{|p_i|^2}{2m_i} -U(q).
\end{equation}

In the past decade, the existence of many new interesting periodic orbits are proved by using variational method for the n-body problem. Most of them are found by minimizing the Lagrangian action on a symmetric loop space with some topological constraints (for example, see \cite{BT, BT2, Chen1, DengZhangZhou, FG, FT, TE, TV}).  %Only a few of them are able to study their dynamical properties such as stability because most of them are unable to provide some essential detail information such as initial positions and initial velocities of the orbits.  %But very few results were obtained on the general periodic function space because of lack of the structural information of solutions on the space. In addition,It is natural and typical to impose suitable symmetries or topological conditions such as equivariant group on the function space. The lack of coercivity of the action functional on the general periodic function space and the fact that collision orbits may give a bounded action, are the main mathematical obstructions in the search of periodic solutions of the $n$-body problem as minimizers of the action.Many new periodic solutions have been exploited

Following the notions in \cite{BT, CGMS},  a {\it simple choreographic solution} (for short, choreographic solution) is a periodic solution that all bodies chase one another along a single closed orbit. If the orbit of a periodic solution consists of two
 closed curves, then it is
called a {\it double-choreographic solution}.  If the orbit of a periodic solution consists of three
 closed curves, then it is
called a {\it triple-choreographic solution}. If the orbit of a periodic solution consists of different closed curves, each of which is the trajectory of exact one body,  it is called {\it non-choreographic solution}. Many relative equilibria give rise to simple choreographic solutions and they are called trivial choreographic solutions (circular motions). After the discovery of the  first remarkable non-trivial choreographic solution -- the figure eight of the three body problem  by Moore (1993 \cite{Moor}) and Chenciner and Montgomery (2000, \cite{CM}), many expertise attempt to study choreographic solutions and a large number of simple choreographic solutions have been discovered numerically  but very few of them have rigorous existence proofs. More results can be found in \cite{ABT,  BCPS, Br, CGMS, CV, Chen2, Ouyang1,DengZhangZhou} and the reference therein.

  In this paper, we are interested in the action minimizing solutions in path space satisfying the SPBC for planar four-body problem with unequal masses. We prove the existence of periodic and quasi-periodic  orbits for two pairs of unequal masses. We look for the continuum of the periodic and quasi-periodic solutions for equal masses discovered by the  variational method with SPBC developed in the recent paper \cite{OuXie}. Among the many stable simple choreographic orbits in \cite{OuXie},  the most extraordinary one is the stable star pentagon choreographic solution. The variants of star pentagon from equal mass to unequal mass is continuously deforming from  simple choreographic orbit to double choreographic orbits (see figure \ref{fig1}).

Figure eight is a remarkably  non-trivial simple choreographic solution, but more importantly, it is stable and the stability was proved in (\cite{KS,RG}).  It seems very hard to find a stable simple choreographic solution (C. Sim\'o \cite{SM} and R. Vanderbei \cite{Va}). To the best knowledge of the authors, all of the above known simple choreographic solutions are unstable except the figure eight \cite{ CM, Moor} and the family of star pentagon \cite{OuXie}. The journal {\it Science} had two articles \cite{MaD,Sc} on the figure-eight orbit. %with the titles \lqlq Triple Star Systems May Do Crazy Eights\rq\rq and \lq\lq Planetary ballet\rq\rq.
They deal with the idea that there could exist a planet system of equal masses. But the window of stability of figure-eight orbit is very small with slightly change of masses. Hence it seems unlikely that any real stars follow such an orbit.
Significantly different from the remarkable figure-eight orbit, the unequal-mass variants of the stable star pentagon seem to be just as stable as the basic equal mass choreography. This fact makes the beautiful star pentagon orbit all the more remarkable because such periodic solutions actually have more chance to be seen in some quadruple star system.

%The simulation of the solutions for the n-body problem can be found at http://sest.vsu.edu/$\sim$zxie/N\underline{\hbox{ }}body\underline{\hbox{ }}Simulation.htm.
In order to get a possible preassigned periodic orbit, we have to find an appropriate SPBC. Throughout the paper, we assume $m_1=m_3$ and $m_2=m_4$ and let $\mu=\frac{m_2}{m_1}$. Let $\Gamma=\mathbf{R}^6$ and the rotation matrix $R(\theta)= \left( \begin{array}{ll} \cos(\theta) & -\sin(\theta)\\
\sin(\theta) & \cos(\theta)\end{array} \right)$.

\begin{quote} {\bf \Large Our Settings on SPBC:} \\
Given $\vec{a}=(a_1,a_2,\cdots,a_6)\in \Gamma$, two fixed configurations are defined by $Qstart=   \left( \begin{array}{cc} 0 & -a_3\\
-a_1 & a_2\\ 0 & \frac{-m_2a_2-m_4a_2+m_1a_3}{m_3}\\
a_1 & a_2 \end{array} \right) R(\theta),$
and
$Qend=\left( \begin{array}{cc} a_4 & a_5\\
0 & -a_6\\ -a_4 & a_5\\
0 & \frac{-m_1a_5-m_3a_5+m_2 a_6}{m_4} \end{array} \right).$ Let
\begin{equation}\label{Fixpath}
 \mathcal{P}(Qstart,Qend):=\{q(t)\in H^1([0,T], (\mathbf{R}^2)^4) {\big | } q(0)=Qstart, q(T)= Qend\}.
 \end{equation}
 Then the set $S(\vec{a})$ of minimizers is defined by $$S(\vec{a})=\{ q(t) =(q_1, q_2, q_3, q_4)(t) \in C^2((0,T),(\mathbf{R^2})^4)\quad {\big | }\quad   q(0)=Qstart, q(T)=Qend,$$ $$ q(t) \hbox{ is a minimizer of the action functional } \mathcal{A} \hbox{ over } \mathcal{P}(Qstart,Qend) \}.$$
So the configuration of the bodies changes from an isosceles triangle with one on the axis of symmetry  to another isosceles triangle for some positive $\vec{a}$.\end{quote}

For any given $\vec{a}\in \Gamma$, the minimizers of $\mathcal{A}$ that connect $Qstart$ and $Qend$ are classical collision-free solutions in the interval $(0, T)$.  The motion starting from $Qstart$ to $Qend$ will continue beyond the $Qend$ under the universal gravitation but the continuation is hard to predict in general. The second minimizing process can find an appropriate $\vec{a}$ such that the motion can be extended in the way we expected.

The real value function $\tilde{\mathcal{A}}(\vec{a}): \Gamma\rightarrow \mathbf{R}$ is well defined by
 \begin{equation}\label{VAR1}
 \tilde{\mathcal{A}}(\vec{a}) = \int_{0}^{T}  \frac{1}{2}\sum_{i=1}^{n} m_i\|\dot{q}_i(t;\vec{a})\|^2+U(q(t;\vec{a}))dt,
 \end{equation}
where $q(t;\vec{a})\in S(\vec{a})$  is a minimizer of the action functional $\mathcal{A}$ over $\mathcal{P}(Qstart,Qend)$ for the given $\vec{a}\in \Gamma$. If it is clear that $q(t;\vec{a})$ is a minimizer for the given $\vec{a}$ from context, we still use $q(t)$ for $q(t;\vec{a})$ for convenience.

Let $\vec{a}_0=(a_{10},a_{20},\cdots,a_{60})\in \Gamma$ be a minimizer of $\tilde{\mathcal{A}}(\vec{a})$ over the space $\Gamma$ and the corresponding path $q^*(t)=q^*(t;\vec{a}_0)\in S(\vec{a}_0)$, i.e.
  \begin{equation}\label{VAR}
  \begin{array}{ll}
  \tilde{\mathcal{A}}(\vec{a}_0) &= \min_{\vec{a}\in \Gamma} \tilde{\mathcal{A}}(\vec{a}) =\min_{\vec{a}\in\Gamma}\left\{\inf_{q(t)\in \mathcal{P}(Q_{start},Q_{end})} \mathcal{A}(q(t))\right\}\\
  \\
  &=\min_{\vec{a}\in\Gamma}\left\{\inf_{q(t)\in \mathcal{P}(Q_{start},Q_{end})}\int_{0}^{T}  \frac{1}{2}\sum_{i=1}^{n} m_i\|\dot{q}_i(t)\|^2+U(q(t))dt\right\}. \end{array}
 \end{equation}
   Then the path $q^*$  is the solution we want.

   \begin{theorem}[Existence and Extension Formula]\label{main1}
Assume  $m_1=m_3>0$, $m_2=m_4>0$ and $\mu=\frac{m_2}{m_1}$.  For any $\theta\in(0,2\pi)$ and $\theta \notin \{\frac{\pi}{2},\pi, \frac{3\pi}{2}\}$, there exists at least one local  minimizer of $\tilde{\mathcal{A}}(\vec{a})$ over the space $\Gamma$. For any minimizer $\vec{a}\in \Gamma$  of $\tilde{\mathcal{A}}$ over the space $\Gamma$,
the corresponding minimizing path $q^*(t)$  on $[0, T]$  connecting $q(0)$ and $q(T)$ can be extended to a classical solution $q(t)=(q_1(t), $ $ q_2(t),$ $ q_3(t),$ $q_4(t))$ of the Newton's equation \eqref{Newton} by the reflection $B=\left( \begin{array}{ll} -1 & 0\\
0 & 1\end{array} \right)$, the permutation $\sigma$ and the rotation $R(\theta)$ as follows: $q(t)=q^*(t)$ on $[0, T]$,
 $$q(t)=(q^*_3(2T-t),q^*_2(2T-t),q^*_1(2T-t),q^*_4(2T-t))B \quad \hbox{ on } \quad (T, 2T],$$
and
 \begin{equation}\label{qet}
 q(t)=\sigma^{k}(q(t-2kT))R(-2k\theta) \hbox{ for } t\in (2kT,(2k+2)T]
  \hbox{ and } k\in \mathbf{Z}^+,
 \end{equation}
where $\sigma=[3, 4, 1, 2]$ is a permutation such that  $\sigma(q(t-2T))=( q_3(t-2T), q_4(t-2T), q_1(t-2T), q_2(t-2T)).$

\end{theorem}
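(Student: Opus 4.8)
The plan is to break the statement into two logically independent parts: (i) the existence of a local minimizer $\vec{a}_0\in\Gamma$ of the reduced action $\tilde{\mathcal{A}}$, and (ii) the verification that the minimizing path $q^*$ attached to any such minimizer splices together, via the reflection $B$, the transposition $\sigma$, and the rotation $R(\theta)$, into a genuine $C^2$ solution of Newton's equations. For (i) I would first establish that for each fixed $\vec{a}\in\Gamma$ the inner infimum over $\mathcal{P}(Qstart,Qend)$ is attained by a collision-free classical solution on $(0,T)$: coercivity of the Lagrangian action on $H^1$ with fixed endpoints is standard, weak lower semicontinuity gives a minimizer, and the exclusion of interior collisions follows from the usual Marchal-type / local-deformation argument (or by invoking the collision-exclusion results that the paper surely proves just before this theorem). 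This makes $\tilde{\mathcal{A}}(\vec{a})$ well-defined, as asserted around~\eqref{VAR1}. Then I would show $\tilde{\mathcal{A}}$ is continuous on $\Gamma$ and that it has a local minimizer; the cleanest route is to exhibit a bounded sublevel set on which $\tilde{\mathcal{A}}$ is coercive — as $|\vec{a}|\to\infty$ the endpoints $Qstart,Qend$ are forced far apart (or far from the origin), forcing the kinetic part of the action up — so a global (hence local) minimum exists. The excluded angles $\theta\in\{\pi/2,\pi,3\pi/2\}$ are presumably exactly where the two configuration spaces $\mathbf{A},\mathbf{B}$ degenerate or become artificially aligned, so the argument needs $\theta$ outside that set.

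For part (ii), the mechanism is the reflection/symmetry trick of Chenciner–Venturelli and Marchal: a minimizing arc whose endpoints lie on suitable symmetry loci can be continued by repeatedly applying the boundary symmetries, and the continuation is automatically $C^1$ (hence $C^2$ and classical, by the equations of motion) precisely because the minimality kills the velocity jump. Concretely I would argue that the first-order (free-boundary) conditions satisfied by $q^*$ at $t=0$ and $t=T$ — the transversality conditions for the minimization over $Qstart\in\mathbf{A}$ and $Qend\in\mathbf{B}$ — say exactly that $\dot q^*(0)$ and $\dot q^*(T)$ are orthogonal to the respective configuration spaces. One then checks, by direct computation with the given matrix forms of $Qstart$ and $Qend$, that the map $q^*(t)\mapsto (q^*_3,q^*_2,q^*_1,q^*_4)(2T-t)B$ is an isometry of the problem (it preserves the masses, because $m_1=m_3$ and $m_2=m_4$, and preserves $U$ and $K$) that fixes the endpoint configuration $q(T)=Qend$ and reverses time there; hence the reflected arc on $(T,2T]$ meets $q^*$ at $t=T$ with matching position, and the transversality condition forces the velocities to match as well, giving a $C^1$ junction. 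A symmetric argument at $t=2T$, now using the permutation $\sigma=[3,4,1,2]$ composed with $R(-2\theta)$, propagates the solution to $(2kT,(2k+2)T]$ by~\eqref{qet}; one verifies $\sigma$ and $R(-2\theta)$ are again symmetries of the $N$-body Lagrangian (equal-mass pairs plus rotational invariance) and that the iterate is consistent, i.e. applying the rule twice over a $2T$-window agrees with the single $4T$-window rule.

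The step I expect to be the genuine obstacle is the endpoint/transversality bookkeeping in part (ii): one must pin down exactly which linear subspaces $\mathbf{A}$ and $\mathbf{B}$ are (the $6$-parameter families $Qstart,Qend$ read off from $\vec a$), show the reduced minimization in $\vec a$ makes $q^*$ stationary not just in the interior but also "across" the boundary in the right directions, and then match this against the fixed-point set of the candidate symmetry so that the natural boundary conditions become $C^1$-matching conditions rather than mere orthogonality statements. Subtleties here include making sure the symmetry $g:q^*(t)\mapsto(q^*_3,q^*_2,q^*_1,q^*_4)(2T-t)B$ actually maps the isosceles endpoint configuration to itself with the correct labeling (this is where the specific zero-pattern and the third-coordinate formula in the matrices $Qstart,Qend$, dictated by the center-of-mass constraint, are used), and confirming the combined action of $\sigma$ and $R(-2k\theta)$ is a bona fide symmetry for every integer $k$ — i.e. that no collision is introduced at the glueing times $t=kT$, which again uses the collision-exclusion input together with the fact that the junction configurations are the manifestly collision-free isosceles triangles. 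Once the $C^1$ matching is in hand, $C^2$ and real-analytic regularity of the full path $q(t)$ is immediate from Newton's equations~\eqref{Newton} and elliptic/ODE regularity, and the period or quasi-period statement follows by tracking how $R(-2k\theta)$ accumulates, which is where the dichotomy "$\theta$ commensurable with $\pi$ or not" enters.
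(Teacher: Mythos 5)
Your overall architecture matches the paper's: coercivity of $\tilde{\mathcal{A}}$ on $\Gamma$ (the kinetic term blows up as $|\vec{a}|\to\infty$ because the endpoints separate, which is exactly where the excluded angles $\frac{\pi}{2},\pi,\frac{3\pi}{2}$ enter) plus lower semicontinuity for existence, and then the first-variation/transversality conditions of the free-boundary problem on $\mathcal{P}(\mathbf{A},\mathbf{B})$ to force the velocity identities at $t=T$ and $t=2T$ that make the reflected, permuted, and rotated continuation $C^1$, hence a classical solution by Newton's equations. The paper carries out precisely the bookkeeping you flag as the main obstacle: it writes $\delta_\xi\mathcal{A}(q)=\sum_i m_i\langle\dot q_i,\xi_i\rangle\big|_0^T=0$ for a basis of admissible variations $\xi^{(i)}$ tangent to $\mathbf{A}$ and $\mathbf{B}$, combines the resulting linear relations with conservation of total momentum and $m_1=m_3$, $m_2=m_4$, and checks these are equivalent to the $C^1$-matching conditions \eqref{TT} and \eqref{T0}.

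One claim in your sketch is wrong and hides a substantive step. You assert that the junction configurations are ``manifestly collision-free isosceles triangles.'' They are not: the boundary spaces $\mathbf{A}$ and $\mathbf{B}$ contain collision configurations for degenerate parameter values (e.g.\ $a_1=0$ with $a_2=\mu^{-1}a_3$ gives a simultaneous binary collision, $a_1=a_2=a_3=0$ gives total collision, $a_1=0$, $a_2=-a_3$ a triple collision), and nothing a priori prevents the minimizer $\vec{a}_0$ from landing on one of these strata. Excluding this is the content of the paper's collision-free theorem, and it is not a free corollary of the Chenciner--Marchal interior result: one must run the local-deformation argument at $t=0$ and $t=T$ using perturbations that stay inside $\mathbf{A}$ (respectively $\mathbf{B}$), i.e.\ the deformation directions $\vec{s}$ must be chosen from the tangent directions of the SPBC (vectors of the form $(0,\pm1)R(\theta)$ or $(\pm1,0)R(\theta)$), and the ejection asymptotics of Sundman--Sperling are needed to show the constrained deformation still lowers the action. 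Your proposal would need this additional argument before the extension step can even be applied, since otherwise $q^*$ need not be a classical solution on the closed interval $[0,T]$.
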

\begin{remark}
For given $(\theta,\mu)$, there may exist more than one local minimizer other than homographic solution. But all the corresponding minimizing pathes can be extended by the same extension formula \eqref{qet}.  For example, solutions in figure \ref{fig111} are different from those solutions in figure \ref{fig1} for  $(\theta,\mu)=(\frac{4\pi}{5},0.8)$, $(\frac{4\pi}{5},1)$ and $(\frac{4\pi}{5},1.5)$. The  actions of solutions in figure \ref{fig111} are larger than the corresponding actions of those solutions in figure \ref{fig1}. By our numerical computation,  solutions with smaller action seem more likely stable.  % Solutions in figure \ref{fig1} and figure \ref{fig111} are two different type for same $\theta=\frac{4\pi}{5}$.
\end{remark}

\begin{figure}
\includegraphics[height=5cm,width=.80\textwidth]{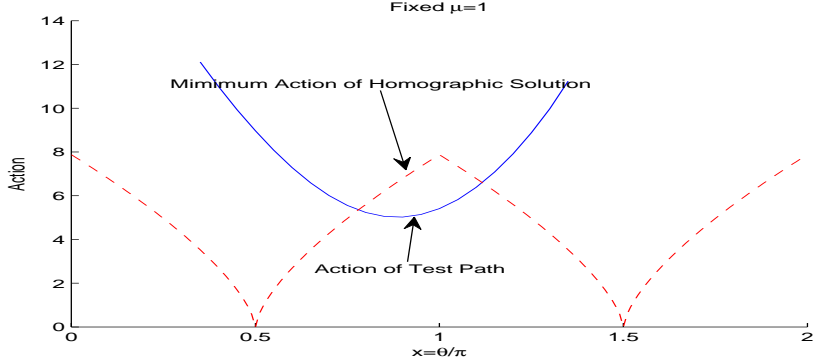}
\caption{Fixed $\mu=1$. The fixed SPBC $\vec{a}_{test}= [ 0.6676542303,$ $ 1.11499232,$ $ 0.5099504088,$ $
        0.6676542314, $ $1.11499232, $ $0.5099504078]$.  Test path has lower action than homographic solution when $0.78\pi<\theta<1.11\pi$.  }\label{ActionMu1}\end{figure}

There is no loss of generality in assuming $m_1=1$, $\mu=\frac{m_2}{m_1}$ and $T=1$ in numerical computation. But we still use $m_1$ and $T$ for the purpose of clarity. Define $\mathcal{A}_{\diamondsuit}, \mathcal{A}_{tpath}: (0,2\pi)\times \mathbf{R}^+\mapsto\mathbf{R}^+$ by
\begin{equation}\label{Ad}
\mathcal{A}_{\diamondsuit}(\theta,\mu)=3\omega^2(m_1r_1^2+m_2r_2^2)T,
\end{equation}
where
\begin{equation}
\omega=\left\{ \begin{array}{ll}|\frac{\pi}{2}-\theta|/T, &\hspace{1cm} \hbox{ if } 0<\theta<\pi, \\
|\frac{3\pi}{2}-\theta|/T, &\hspace{1cm} \hbox{ if } \pi\leq\theta<2\pi.
\end{array}\right.
\end{equation}
  $r_1,$ $r_2$ are uniquely determined by $(\theta,\mu)$ in equations \eqref{CC1} and \eqref{CC2} in section \ref{sec31}. $\mathcal{A}_{\diamondsuit}(\theta,\mu)$ is the minimum value of the action functional over the homographic solution satisfying the SPBC for $(\theta,\mu)$.   Now given a SPBC $\vec{a}_{test}$, the test path $\bar{q}(t)$ with constant velocity connecting the structural prescribed boundaries $Qstart$ and $Qend$ is given by
\begin{equation}\label{Tpath}
\bar{q}(t)=Qstart+\frac{t(Qend-Qstart)}{T}, t\in[0,T].
\end{equation}
Then the action of the test path is computed as
\begin{equation}\label{At}
\mathcal{A}_{tpath}(\theta,\mu)= \left\{ \begin{array}{ll}
& \sum_{k=1}^4\frac{1}{2T}m_k\|Qend_k-Qstart_k\|^2 \\
\\
  &+\int_0^T  \sum_{1\leq k<j\leq 4} \frac{m_k m_j}{\|(Qstart_k-Qstart_j)(1-\frac{t}{T})+(Qend_k-Qend_j)\frac{t}{T}\|}dt,
 \end{array} \right.
\end{equation}
which is an explicit function of $\theta$ and $\mu$. For example, fixed $\mu=1$, figure \ref{ActionMu1} shows the graph of $\mathcal{A}_{tpath}(\theta,1)$ and $\mathcal{A}_{\diamondsuit}(\theta,1)$. There exists an interval of $\theta$ such that test path has lower action than homographic solution has.\\
The  set $\Omega_{\vec{a}_{test}}$ for $\vec{a}_{test}$ is defined as
\begin{equation}%\label{thmu}
\Omega_{\vec{a}_{test}}= \{ (\theta,\mu)\in (0,2\pi)\times\mathbf{R}^+| \mathcal{A}_{\diamondsuit}(\theta,\mu)>\mathcal{A}_{tpath}(\theta,\mu) \hbox{ and } \theta\not=\pi \}.
 \end{equation}
 The size of the set $\Omega_{\vec{a}_{test}}$ strongly depends on the choice of $\vec{a}_{test}$. Figure \ref{Omega1} shows an example of the nonempty region $\Omega_{\vec{a}_{test}}$ on which the test path has lower action than homographic solution.\\
The admissible set $\Omega$ is defined as the union of all the set $\Omega_{\vec{a}_{test}}$.
\begin{equation}\label{thmu}
\Omega=\bigcup_{\vec{a}_{test}} \Omega_{\vec{a}_{test}}.\end{equation}
 \begin{figure}
\includegraphics[height=5cm,width=.68\textwidth]{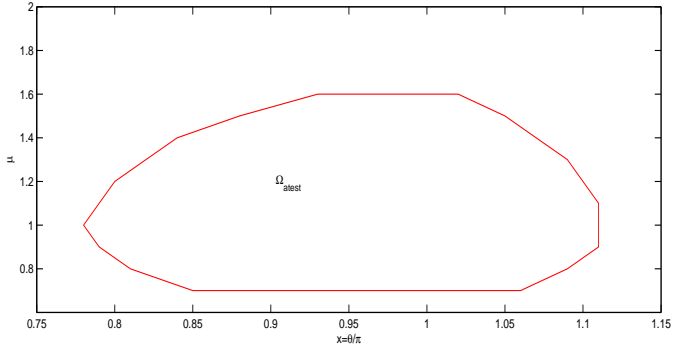}
\caption{ Test path has lower action than homographic solution when $(\theta,\mu)$ is in the region $\Omega_{\vec{a}_{test}}$. The fixed SPBC $\vec{a}_{test}= [ 0.6676542303,$ $ 1.11499232,$ $ 0.5099504088,$ $
        0.6676542314, $ $1.11499232, $ $0.5099504078]$. The $x$-axis is $\frac{\theta}{\pi}$ and the $y$-axis is $\mu$. }\label{Omega1}\end{figure}

\begin{theorem}[Classifications of Non-homographic Solutions]\label{main2}

%There exist $\theta_0$ and $\theta_1$ such that for  any $\theta\in (\theta_0,\theta_1)$ there exist $\mu_{0\theta}$ and $\mu_{1\theta}$. For any fixed $\theta \in (\theta_0,\theta_1)$ and any fixed $\mu\in (\mu_{0\theta},\mu_{1\theta})$, There exists a SPBC $\vec{a}_{test}$ such that $\Omega$ is nonempty.
 For any given $(\theta, \mu)\in \Omega$ and $\theta\not=\pi$,
 there exists at least one minimizer $\vec{a}_0\in \Gamma$  of $\tilde{\mathcal{A}}$ over the space $\Gamma$, such that,
the corresponding minimizing path $q^*(t)$  on $[0, T]$  connecting $q(0)$ and $q(T)$ can be extended to a non-homographic solution $q(t;\theta,\mu,\vec{a}_0)$ (for short $q(t)$) of the Newton's equation \eqref{Newton} by the extension formula \eqref{qet}. %The action of the solution is smaller than the action of its homographic solution.
Each curve $q_i(t), t\in[4kT,(4k+4)T]$ is called a side of the orbit since the orbit of the solution is assembled out the sides $q_i(t), t\in[0,4T]$ by rotation only. The non-homographic solution $q(t;\theta,\mu,\vec{a}_0)$ can be classified as follows (see figures \ref{qua1} to \ref{case4}).\\
 (1) {\bf [Quasi-Periodic Solutions]}  $q(t;\theta,\mu,\vec{a}_0)$ is a quasi-periodic solution if $\theta$ is not commensurable with $\pi$. \\
 (2) {\bf [Periodic Solutions]} $q(t;\theta,\mu,\vec{a}_0)$is a periodic solution if $\theta=\frac{P}{Q}\pi$, where the positive integers $P$ and $Q$ are relatively prime.
\begin{itemize}
\item  When $Q$ is even, the periodic solution $q(t;\theta,\mu,\vec{a}_0)$ is a non-choreographic solution. Each closed curve has $\frac{Q}{2}$ sides. The minimum period is $\mathcal{T}=2QT$.
%\item When $Q$ is odd and $P$ is odd, the periodic solution $q(t;\theta,\mu,\vec{a}_0)$ is a double-choreographic solution. Each closed curve has $Q$ sides. The minimum period is $\mathcal{T}=4QT$. Body $q_1$ chase body $q_3$ on a closed curve and body $q_2$ chase body $q_4$ on another closed curve. $q_1(t+2QT)=q_3(t)$ and $q_3(t+2QT)=q_1(t).$ $q_4(t+2QT)=q_2(t)$ and $q_2(t+2QT)=q_4(t).$
\item When $Q$ is odd, there are four cases.\\
 {\bf Case 1:} If $\mu\not=1$, the periodic solution $q(t;\theta,\mu,\vec{a}_0)$ is a double-choreographic solution. Each closed curve has $Q$ sides. The minimum period is $\mathcal{T}=4QT$. Body $q_1$ chases body $q_3$ on a closed curve and body $q_2$ chases body $q_4$ on another closed curve. $q_1(t+2QT)=q_3(t)$ and $q_3(t+2QT)=q_1(t).$ $q_4(t+2QT)=q_2(t)$ and $q_2(t+2QT)=q_4(t).$\\
 {\bf Case 2:}  If $\mu=1$ and  $P$ is odd , the periodic solution $q(t;\theta,\mu,\vec{a}_0)$ is a double-choreographic solution with minimum period $\mathcal{T}=4QT$. Body $q_1$ chases body $q_3$ on a closed curve and body $q_2$ chases body $q_4$ on another closed curve.\\
   {\bf Case 3:} If $\mu=1$, $P$ is even and the initial configuration $q(0)$ is geometrically same to the ending configuration $q(T)$, i.e. $(a_{10}, a_{20}, a_{30})= (a_{40}, a_{50}, a_{60})$,  then the periodic solution is a choreographic solution. The closed curve has $Q$ sides. The minimum period is $\mathcal{T}=4QT$.\\ (A) If $\frac{Q-1}{2}$ is odd, then the four bodies chase each other on the closed curve in the order of $q_1, q_2, q_3, q_4,$ and then $q_1$, i.e. $q_1(t+QT)=q_2(t),$ $q_2(t+QT)=q_3(t),$ $q_3(t+QT)=q_4(t),$ and $q_4(t+QT)=q_1(t).$\\  (B) If $\frac{Q-1}{2}$ is even, then the four bodies chase each other on the closed curve in the order of $q_1, q_4, q_3, q_2,$ and then $q_1$, i.e. $q_1(t+QT)=q_4(t),$ $q_4(t+QT)=q_3(t),$ $q_3(t+QT)=q_2(t),$ and $q_2(t+QT)=q_1(t).$\\
  {\bf Case 4: } If $\mu=1$, $P$ is even and the initial configuration $q(0)$ is not geometrically same to the ending configuration $q(T)$, i.e. $(a_{10}, a_{20}, a_{30})\not= (a_{40}, a_{50}, a_{60})$,  then the periodic solution is a double choreographic solution. Each closed curve has $Q$ sides. The minimum period is $\mathcal{T}=4QT$. Body $q_1$ chases body $q_3$ on a closed curve and body $q_2$ chases body $q_4$ on another closed curve. $q_1(t+2QT)=q_3(t)$ and $q_3(t+2QT)=q_1(t).$ $q_4(t+2QT)=q_2(t)$ and $q_2(t+2QT)=q_4(t).$\\

\end{itemize}

\end{theorem}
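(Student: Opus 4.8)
\noindent\textit{Proof plan.}
Since $(\theta,\mu)\in\Omega$ forces $\theta\notin\{\frac{\pi}{2},\pi,\frac{3\pi}{2}\}$ (at $\theta=\frac{\pi}{2}$ or $\frac{3\pi}{2}$ one has $\omega=0$, so $\mathcal A_{\diamondsuit}(\theta,\mu)=0<\mathcal A_{tpath}(\theta,\mu)$ and such $\theta$ lie in no $\Omega_{\vec a_{test}}$; $\theta=\pi$ is excluded by definition), Theorem~\ref{main1} already provides a minimizer $\vec a_0\in\Gamma$ of $\tilde{\mathcal A}$ whose associated minimizing path $q^{*}$ extends by \eqref{qet} to a classical (collision-free) solution $q=q(t;\theta,\mu,\vec a_0)$ of \eqref{Newton}. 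It remains to prove that $q$ is non-homographic and to establish the classification. For the first, fix $\vec a_{test}$ with $(\theta,\mu)\in\Omega_{\vec a_{test}}$; the role of the hypothesis $(\theta,\mu)\in\Omega$ is to guarantee a minimizer with $\tilde{\mathcal A}(\vec a_0)\le\tilde{\mathcal A}(\vec a_{test})\le\mathcal A(\bar q)=\mathcal A_{tpath}(\theta,\mu)<\mathcal A_{\diamondsuit}(\theta,\mu)$, where $\bar q$ is the constant-velocity test path \eqref{Tpath}, admissible for the boundary data of $\vec a_{test}$. Since $\mathcal A_{\diamondsuit}(\theta,\mu)$ is by definition the least action of a homographic motion obeying the SPBC for $(\theta,\mu)$, the strict inequality $\mathcal A(q^{*})<\mathcal A_{\diamondsuit}(\theta,\mu)$ shows $q^{*}$, hence its extension $q$, cannot be homographic.

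For the classification, the first step is to distill from \eqref{qet} the single functional relation
\[
q(t+2T)=\sigma\bigl(q(t)\bigr)\,R(-2\theta),\qquad t>0,
\]
obtained by expressing the index-$k$ case of \eqref{qet} through the index-$(k-1)$ case and using that the body-permutation $\sigma$ commutes with right multiplication by a rotation. Iterating and using $\sigma^{2}=\mathrm{id}$ gives $q(t+2NT)=\sigma^{N}(q(t))R(-2N\theta)$, in particular $q(t+4T)=q(t)R(-4\theta)$, and componentwise $q_i(t+2kT)=q_{\sigma^{k}(i)}(t)R(-2k\theta)$. Thus $q$ is relatively periodic with monodromy rotation $R(-4\theta)$. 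If $\theta$ is incommensurable with $\pi$, then $R(-4N\theta)\ne I$ for every $N\ge1$, so $q$ is not periodic; the closure of its orbit is the union of the $SO(2)$-rotates of the arc $q([0,4T])$ and the induced motion has irrational rotation number, so $q$ is quasi-periodic, which is part~(1). If $\theta=\frac{P}{Q}\pi$ with $\gcd(P,Q)=1$, then $R(-2N\theta)=I\iff Q\mid N$ and $\sigma^{N}=\mathrm{id}\iff 2\mid N$, so the least admissible $N$ is $Q$ for $Q$ even and $2Q$ for $Q$ odd, giving minimum period $\mathcal T=2QT$, respectively $\mathcal T=4QT$ — provided one excludes spurious periods not of the form $2NT$, which follows because a real-analytic collision-free solution of \eqref{Newton} whose relative periods fill a dense subgroup of $\mathbf R$ would be a relative equilibrium, contradicting non-homography.

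The choreography type is then read off the componentwise relation. For $Q$ even, $q(t+2QT)=\sigma^{Q}(q(t))R(-2Q\theta)=q(t)$, so every body is $2QT$-periodic on its own closed curve, a union of $\mathcal T/(4T)=Q/2$ rotated copies of the $4T$-arc $q_i([0,4T])$; a short computation with the rotation angles shows $q_1([0,4T])$ and $q_3([0,4T])$ span distinct curves (the two curves differ by the rotation $R(-2\theta)$, which is a symmetry of neither), and likewise for $q_2,q_4$ and across the two pairs, so the solution is non-choreographic. For $Q$ odd, $\sigma^{Q}=\sigma$ and $R(-2Q\theta)=R(-2P\pi)=I$, so $q(t+2QT)=\sigma(q(t))$, i.e. $q_1(t+2QT)=q_3(t)$, $q_3(t+2QT)=q_1(t)$, $q_2(t+2QT)=q_4(t)$, $q_4(t+2QT)=q_2(t)$: bodies $1,3$ chase each other on one $Q$-sided closed curve and bodies $2,4$ on another, a double choreography of period $4QT$. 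This is the generic outcome and covers Case~1 ($\mu\ne1$), Case~4 ($\mu=1$, $P$ even, $q(0)\not\cong q(T)$), and Case~2 ($\mu=1$, $P$ odd). The single collapse to a simple choreography, Case~3, happens precisely when $\mu=1$, $P$ is even, and the two boundary triangles coincide, $(a_{10},a_{20},a_{30})=(a_{40},a_{50},a_{60})$: here the potential acquires the full permutation symmetry of the equal-mass problem, and one shows the minimizer inherits an extra symmetry relating $q(t)$ to $q(t+QT)$ by a $4$-cycle of the bodies composed with the reflection $B$ from the $[T,2T]$-gluing and with the rotation $R(-Q\theta)=R(-P\pi)$, which is trivial exactly because $P$ is even. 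Its square is the already-established $q(t+2QT)=\sigma(q(t))$, so it is consistent, and it fuses the two curves of the $Q$-odd case into one $Q$-sided closed curve swept by all four bodies with shift $QT$; the parity of $\frac{Q-1}{2}$ — the number of reflections $B$ accumulated over a time-span $QT$ — then fixes whether the cyclic chasing order is $(1,2,3,4)$ or $(1,4,3,2)$.

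The main obstacle is this last point: establishing that the \emph{minimizer itself} (not merely its boundary data) carries the extra symmetry when $\mu=1$, and tracking precisely how that symmetry composes with the reflection $B$ and the rotation $R(-Q\theta)$ so as to separate Cases~2, 3, 4 and to fix the chasing order in Case~3. This is the ``symmetry is proved, not imposed'' mechanism announced in the introduction, and it should rest on a uniqueness argument for the minimization problems defining $S(\vec a)$ and $\tilde{\mathcal A}$ together with the invariance of those problems under the relevant equal-mass symmetry. The secondary technical point, ruling out spurious periods incommensurable with $2T$, is handled as indicated by real-analyticity of $q$ off collisions and the non-homography established above.
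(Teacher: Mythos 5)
Your treatment of existence, non-homography (via $\tilde{\mathcal A}(\vec a_0)\le\mathcal A_{tpath}<\mathcal A_{\diamondsuit}$), quasi-periodicity, and the $Q$ even/odd dichotomy with the period count $\mathcal T=2QT$ or $4QT$ follows the paper's argument essentially verbatim, and that part is sound. The gaps are in the case analysis for $Q$ odd. First, for Cases 1, 2 and 4 you assert a \emph{double} choreography as ``the generic outcome'' without showing that the closed curve traced by $q_1,q_3$ is actually different from the one traced by $q_2,q_4$; if they coincided the solution would be a simple choreography, so this distinctness is precisely the content to be proved. The paper supplies three short arguments you omit: for $\mu\ne1$ the centers of mass of the pairs $\{1,3\}$ and $\{2,4\}$ lie at distances from the origin in the fixed ratio $\mu\ne1$, so the pairs cannot share an orbit; for $\mu=1$ with $P$ odd one checks from \eqref{qet} that the endpoint sets $\{q_1(2kT),q_3(2kT)\}$ and $\{q_2((2k+1)T),q_4((2k+1)T)\}$ are disjoint; and in Case 4 the endpoints $q_1(k_1T),q_3(k_1T)$ never match $q_2(k_2T),q_4(k_2T)$ because $(a_{10},a_{20},a_{30})\ne(a_{40},a_{50},a_{60})$.

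Second, and more seriously, your plan for Case 3 rests on ``a uniqueness argument for the minimization problems defining $S(\vec a)$ and $\tilde{\mathcal A}$'' to show the minimizer inherits the extra equal-mass symmetry. That tool is not available: the paper explicitly notes (Remark 1.2 and Figures 1 vs.\ 2) that for the same $(\theta,\mu)$ there are several distinct local minimizers, so invariance of the variational problem under a symmetry does not force any particular minimizer to be symmetric. The paper does not need this. Its Case 3 is derived from data already established for \emph{every} extended minimizer: the extension formula \eqref{qet}, the reflection gluing on $(T,2T]$, the velocity relations \eqref{TT}--\eqref{T0}, and the hypothesis $(a_{10},a_{20},a_{30})=(a_{40},a_{50},a_{60})$ together imply that at some time $kT$ the configuration and velocity satisfy $q(t_0+QT)=\rho(q(t_0))$, $\dot q(t_0+QT)=\rho(\dot q(t_0))$ for a cyclic body permutation $\rho$ composed with an isometry; since $\mu=1$ makes $\rho$ a symmetry of \eqref{Newton}, uniqueness of the ODE flow (not of the minimizer) propagates the relation to all $t$ and fuses the two curves into one, with the parity of $\frac{Q-1}{2}$ fixing the chasing order as you describe. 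You flag this step yourself as unresolved; as proposed it would not close, so you should replace the minimizer-uniqueness idea with the ODE-uniqueness argument driven by the boundary identities.
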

  By using canonical transformation, we reduce the dimension of the Hamiltonian system to eliminate the trivial $+1$ multipliers for the periodic solutions. Then we prove that the periodic solutions are linearly stable in the reduced system by computing the remaining  multipliers of monodromy matrix. The proof is computer-assisted and it is computed one by one.
\begin{theorem}\label{main3}(Linear Stability). Consider the solutions in theorem \ref{main2}.
\begin{itemize}
\item    If $\theta=\frac{2P-1}{2P}\pi$ and $\mu=0.5, 1, 1.5$, the  non-choreographic solutions $q(t)$ are linearly stable for    $P=3,4,5,\cdots, 15$.
\item If $\theta=\frac{2P}{2P+1}$ and $\mu=0.5, 1.5$, the double choreographic solutions $q(t)$ are linearly stable for $P=2,3,\cdots,15$.
 \item If $\theta=\frac{2P-1}{2P+1}$ and $\mu=1$, the  double choreographic solutions $q(t)$ are linearly stable for $P=4,5, 6,\cdots, 15$.
 \item If $\theta=\frac{2P}{2P+1}$ and $\mu=1$, the  choreographic solutions $q(t)$ are linearly stable for $P=2,3, 4,\cdots, 15$.
\end{itemize}
\end{theorem}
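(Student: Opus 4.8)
The plan is to certify linear stability by a validated computation of a reduced monodromy matrix, carried out separately for each of the finitely many triples $(\theta,\mu,P)$ appearing in the statement. For a fixed admissible $(\theta,\mu)$ and a fixed $P$, I would first compute to high precision the minimizer $\vec{a}_0\in\Gamma$ of $\tilde{\mathcal{A}}$ supplied by Theorem \ref{main2}, recover from it the initial phase point $(q^*(0),\dot q^*(0))$ of the periodic solution $q(t)=q(t;\theta,\mu,\vec{a}_0)$, and fix its minimal period $\mathcal{T}$ ($2QT$ when $Q$ is even, $4QT$ when $Q$ is odd). Linearizing Hamilton's equations associated with the Hamiltonian \eqref{Hamilton} along this orbit yields the $\mathcal{T}$-periodic variational equation $\dot\xi=\mathbf{J}\,D^2H(z(t))\,\xi$, whose fundamental solution at time $\mathcal{T}$ is the monodromy matrix $M\in Sp(16,\mathbf{R})$.

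Next I would exploit the explicit structure of $q(t)$ both to compress the computation and to strip off the trivial multipliers. The extension formula \eqref{qet}, together with the reflection relation on $(T,2T]$ and the permutation $\sigma$, writes the time-$\mathcal{T}$ flow as a composition of the time-$[0,T]$ flow with the constant linear maps induced by $B$, $\sigma$, and $R(\theta)$; equivalently, in a uniformly rotating frame the relative periodic orbit becomes a genuine periodic orbit of period a small fixed multiple of $T$, and $M$ is recovered by conjugation with the discrete symmetry matrices. Placing the center of mass at the origin and the total linear momentum at zero removes four dimensions; using conservation of angular momentum and quotienting by $SO(2)$ (via a canonical change to Jacobi-type symplectic coordinates, or directly in the rotating frame) removes two more; restricting to a fixed energy level and taking a Poincar\'e section transverse to the flow removes the last two. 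The outcome is an $8\times 8$ symplectic reduced monodromy matrix $\widehat M$ carrying no forced unit eigenvalue; time-reversibility of the orbit (inherited from $B$) forces $\widehat M$ to be conjugate to $\widehat M^{-1}$, so its eigenvalues occur in quadruples $\{\lambda,\lambda^{-1},\bar\lambda,\bar\lambda^{-1}\}$, hence in conjugate pairs on the unit circle. Linear stability of $q(t)$ is then equivalent to $\widehat M$ being elliptic: all eight eigenvalues on the unit circle and $\widehat M$ diagonalizable over $\mathbf{C}$.

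It remains to run the computation case by case. For each triple in the four families I would integrate the reduced variational equation with a high-order scheme under rigorous (e.g.\ interval-arithmetic) error control, obtain an enclosure of $\widehat M$ and of its characteristic polynomial, and verify that every root has modulus one and is simple; where an eigenvalue legitimately approaches $+1$ or $-1$, or two multipliers collide on the unit circle, I would additionally check the sign of the associated stability index, i.e.\ the Krein signature, to rule out a parabolic degeneration into a hyperbolic pair. Since the parameter sets $\{P=3,\dots,15\}\times\{0.5,1,1.5\}$ and their analogues are finite, this terminates.

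The hard part is twofold. First, the reduction must remove the trivial $+1$ multipliers \emph{exactly}: if one survives as a numerical eigenvalue near $1$, it cannot be distinguished from a genuine instability eigenvalue near $1$, which would invalidate the certificate. Second, for $Q$ as large as $31$ the period $\mathcal{T}=4QT$ is long, so controlling the accumulated integration error tightly enough that the eigenvalue enclosures certify $|\lambda|=1$ is demanding; the near-resonant cases --- most notably the star pentagon $(\theta,\mu)=(\tfrac{4\pi}{5},1)$, where multipliers can coalesce on the unit circle --- are precisely where the Krein-signature step of the previous paragraph is needed to conclude ellipticity rather than incipient hyperbolicity.
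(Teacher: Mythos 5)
Your strategy is the same in spirit as the paper's: both arguments are computer-assisted, case-by-case computations of a reduced monodromy matrix whose nontrivial multipliers are then checked to lie on the unit circle and be simple (hence diagonalizable). The differences are in the reduction and in the certification. The paper passes to Jacobi coordinates and then symplectic polar coordinates, reducing to a $10$-dimensional Hamiltonian $H_4$; it does \emph{not} take an energy-level Poincar\'e section, so two $+1$ multipliers (associated with $\dot\gamma(0)$ and $J\dot\gamma(0)$) survive in the reduced monodromy matrix and only the remaining eight eigenvalues are examined. More importantly, instead of certifying $|\lambda|=1$ directly --- which, as you rightly note, is the numerically delicate point --- the paper invokes Roberts' device: for a symplectic $X$, the eigenvalues all lie on the unit circle iff $W=\frac{1}{2}\left(X+X^{-1}\right)$ has only real eigenvalues in $[-1,1]$. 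Verifying that the four relevant eigenvalue pairs of $W$ are real, \emph{distinct}, and in the open interval $(-1,1)$ simultaneously delivers the unit-circle condition and diagonalizability, with no Krein-signature analysis required; this is what buys the paper its robustness near resonances. Conversely, the paper's numerics are an ordinary Runge--Kutta--Fehlberg integration cross-checked with several step sizes, not validated interval arithmetic, so the standard of rigor you propose is actually stricter than what the paper delivers. One slip in your write-up: the quadruple symmetry $\{\lambda,\lambda^{-1},\bar\lambda,\bar\lambda^{-1}\}$ of a symplectic or reversible matrix does not by itself place eigenvalues on the unit circle (a quadruple can sit entirely off the circle), so the clause ``hence in conjugate pairs on the unit circle'' does not follow; it is harmless because you go on to require ellipticity to be verified computationally, but the sentence as written is false.
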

\begin{remark}\label{remark1}
(1) $\Omega_{\vec{a}_{test}}$  strongly depends on the choice of SPBC $\vec{a}_{test}$. The union $\Omega$ of such regions   provides the range of $(\theta,\mu)$ where the minimizers have lower action than the action of homographic solutions. Then new periodic or quasi-periodic solutions can be generated from these minimizers. Most solutions for $(\theta, 1)$ in theorem \ref{main2} have been studied in \cite{OuXie} but solutions for $(\theta,1)$ in case 4 do not belongs to the family of solutions in \cite{OuXie}.  \\ %The union contains the region $(0.78\pi,1.1\pi)\times (0.01, 10)$ from our numerical computations but larger range can be expected.\\
(2) Although theorem \ref{main2} only proves the existence of new periodic solutions for $(\theta,\mu)\in \Omega$, there exist new periodic solutions for $(\theta,\mu)\notin \Omega$. There also exist periodic solutions which have larger action than their homographic solutions have. Periodic solution with larger actions are likely unstable from our numerical simulation.    \\
(3) We give a rigorous analytical proof for theorem \ref{main1} and theorem \ref{main2}. The proof of theorem \ref{main3} is computer-assisted. Our theorem \ref{main3} and numerical simulation support the following conjecture. But the proof of the conjecture would be a quite difficult matter and it would involve some new techniques. %To prove the conjecture, some new techniques may be involved to deal with a family of orbits such as index theory (see \cite{HLS}, \cite{YL}, \cite{DO}).\\
%Some cases of the linear stability in theorem \ref{main3} have also been numerically checked \cite{VaW} by Dr. Robert Vanderbei (Princeton University). \\

 {\bf \Large Conjecture:} For every $(\theta,\mu)\in \Omega$ in theorem \ref{main2}, if $\theta$ is commeasurable with $\pi$, there is a linear stable periodic solution.
 %\end{conjecture}
\end{remark}
The rest of the paper is organized as follows. In section 2, we prove the existence and noncollision of minimizing pathes. The existence of the minimizers of the functional $\tilde{\mathcal{A}}$ over the space $\Gamma$ is due to the structure of boundary conditions. Due to the collision free theorem of boundary value problem, it is not hard to prove that the corresponding path of a minimizer is collision free for all time. To prove that the initial minimizing path $q^*(t)$ in $[0,T]$ can be extended to a full solution, we have to check whether the orbits fit well at time $t=kT$. The major difficulty to construct periodic solutions in this variational method with SPBC is to find appropriate SPBC and extension formula. In section 3, we prove that the minimizer generate new periodic solutions which are not homographic orbits.   A special class of homographic orbits satisfying SPBC have their configurations remaining rhomboid for all time. We study orbits of this type in section 3 and we compare them with the orbits we found. This finishes the proof of the existence of new periodic solutions other than homographic solutions. The properties of the new periodic solution are easy to prove by the extension formula. Linear stability is studied in section 4. In the last section, we list some other interesting planar 4-body SPBC and their solutions without detail proof.

\section{Existence, collision free, and extension of minimizing path for boundary value problem}

The minimizer is founded by a two-step minimizing process \eqref{VAR} with appropriate SPBC. In the first step, minimizers are obtained in the full space \eqref{Fixpath} with fixed boundary condition.  For any fixed $\vec{a}\in \Gamma$, the minimizers of $\mathcal{A}$ that connect $Qstart$ and $Qend$ are classical collision-free solutions in the interval $(0, T)$. The existence of minimizers in the Sobolev space is classic and standard. But the assertion of collision free for the boundary value problem is proved by Chenciner \cite{CA2} and Marchal \cite{Ma1} in 2002. They proved that minimizers of $\mathcal{A}$ on the space $\mathcal{P}(Qstart,Qend)$ are collision-free on the interval $(0,T)$ for any given  $Qstart$ and $Qend$ including collision boundary. It is easy to know that $\tilde{\mathcal{A}}$ is lower semicontinuous on $\Gamma$.
 Then the existence of minimizers in the finite dimension space $\Gamma$ is due to the following theorem.
 \begin{theorem}\label{Thm:Ex}%existence
 For $\theta\in (0,2\pi)\backslash\{\frac{\pi}{2},\pi, \frac{3\pi}{2} \}$, $\tilde{\mathcal{A}}(\vec{a})\rightarrow +\infty$ if $|\vec{a}|\rightarrow +\infty$.
  \end{theorem}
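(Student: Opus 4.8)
The plan is to bound $\tilde{\mathcal A}(\vec a)$ below by a positive definite quadratic form in $\vec a$ using only the kinetic part of the action, so that no collision analysis is needed. First I would record the elementary inequality that for every $q\in H^1([0,T],(\mathbf{R}^2)^4)$,
\begin{equation*}
\int_0^T\tfrac12\sum_{i=1}^4 m_i\|\dot{q}_i(t)\|^2\,dt\ \ge\ \frac{1}{2T}\sum_{i=1}^4 m_i\,\|q_i(T)-q_i(0)\|^2,
\end{equation*}
which follows from $\|q_i(T)-q_i(0)\|^2=\|\int_0^T\dot{q}_i\,dt\|^2\le T\int_0^T\|\dot{q}_i\|^2\,dt$ and is attained by the straight-line path. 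Since $U\ge 0$, every $q\in\mathcal{P}(Qstart,Qend)$ satisfies $\mathcal{A}(q)\ge\frac{1}{2T}\sum_i m_i\|Qend_i-Qstart_i\|^2$, a bound that does not depend on $q$; hence the same holds for $\tilde{\mathcal A}(\vec a)=\inf_{q\in\mathcal{P}}\mathcal{A}(q)$. Thus the theorem reduces to showing that $\vec a\mapsto\sum_i m_i\|Qstart_i(\vec a)-Qend_i(\vec a)\|^2$ is coercive on $\Gamma$.

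For fixed $\theta$ and $\mu$, the map $L:\Gamma=\mathbf{R}^6\to(\mathbf{R}^2)^4$, $L\vec a:=(Qstart_i(\vec a)-Qend_i(\vec a))_{i=1}^4$, is \emph{linear}: using $m_1=m_3$ and $m_2=m_4$ the mass-dependent entries become $\frac{-m_2a_2-m_4a_2+m_1a_3}{m_3}=-2\mu a_2+a_3$ and $\frac{-m_1a_5-m_3a_5+m_2a_6}{m_4}=-\frac{2}{\mu}a_5+a_6$, and since $(0,1)R(\theta)=(\sin\theta,\cos\theta)$ the rows of $Qstart$ are $-a_3(\sin\theta,\cos\theta)$, $(-a_1\cos\theta+a_2\sin\theta,\ a_1\sin\theta+a_2\cos\theta)$, $(-2\mu a_2+a_3)(\sin\theta,\cos\theta)$, $(a_1\cos\theta+a_2\sin\theta,\ -a_1\sin\theta+a_2\cos\theta)$, while those of $Qend$ are $(a_4,a_5)$, $(0,-a_6)$, $(-a_4,a_5)$, $(0,-\frac{2}{\mu}a_5+a_6)$. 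Coercivity of $\sum_i m_i\|L\vec a\|^2$ is equivalent to injectivity of $L$, and this is the heart of the matter: I would prove $\ker L=\{0\}$ exactly when $\theta\in(0,2\pi)\setminus\{\frac\pi2,\pi,\frac{3\pi}2\}$. Suppose $L\vec a=0$; for such $\theta$ one has $\sin\theta\ne0$ (as $\theta\ne\pi$) and $\cos\theta\ne0$ (as $\theta\ne\frac\pi2,\frac{3\pi}2$). Matching rows $1$ and $3$ and eliminating $a_4,a_5$ gives $2\mu a_2\sin\theta=0$, hence $a_2=0$ (recall $\mu>0$), and then $2a_3\cos\theta=0$, hence $a_3=0$; consequently $a_4=a_5=0$. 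Row $2$ then reads $(-a_1\cos\theta,\ a_1\sin\theta)=(0,-a_6)$, forcing $a_1=0$ and $a_6=0$, so $\vec a=0$. Therefore $L$ is injective, $L^{\top}L$ is positive definite, and there is $c=c(\theta,\mu)>0$ with $\sum_i m_i\|Qstart_i(\vec a)-Qend_i(\vec a)\|^2\ge(\min_i m_i)\,\|L\vec a\|^2\ge c\,|\vec a|^2$. Combined with the first step, $\tilde{\mathcal A}(\vec a)\ge\frac{c}{2T}\,|\vec a|^2\to+\infty$ as $|\vec a|\to+\infty$.

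The only step needing real care is the injectivity of $L$; it is short linear algebra, but I would want to confirm that the three excluded angles are precisely those at which $\ker L\ne\{0\}$, so that the hypothesis on $\theta$ is sharp. At $\theta=\pi$ the vectors with $a_1=a_4=0$, $a_5=a_3$, $a_2=a_6=a_3/\mu$ form a line in $\ker L$; at $\theta=\frac\pi2$ and $\theta=\frac{3\pi}2$ the conditions $a_2=a_5=0$, $a_4=\mp a_3$, $a_6=\mp a_1$ cut out a two-dimensional kernel, so in those cases the quadratic lower bound degenerates and a genuinely different argument (or the potential term) would be needed. No compactness or collision estimates enter, since the kinetic term alone dominates.
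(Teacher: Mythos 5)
Your proposal is correct, and its first step is exactly the paper's: drop the potential, bound the kinetic part from below by the action of the straight-line path, and reduce the theorem to coercivity of $\vec a\mapsto\sum_i m_i\|Qstart_i(\vec a)-Qend_i(\vec a)\|^2$. Where you diverge is in how that coercivity is established. The paper argues case by case on which coordinate $a_i$ tends to infinity, exhibiting for each case a pair of bodies whose displacement must blow up (and leaves "other cases" to the reader); you instead observe that $\vec a\mapsto (Qstart_i-Qend_i)_i$ is a linear map $L:\mathbf{R}^6\to(\mathbf{R}^2)^4$ and prove $\ker L=\{0\}$ precisely when $\theta\notin\{\tfrac{\pi}{2},\pi,\tfrac{3\pi}{2}\}$. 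I checked your row computations and the kernel argument (rows $1$ and $3$ give $a_2=a_3=a_4=a_5=0$ using $\sin\theta\neq 0$ and $\cos\theta\neq 0$, then row $2$ gives $a_1=a_6=0$); they are right. Your packaging buys three things the paper's version does not make explicit: a uniform quantitative lower bound $\tilde{\mathcal A}(\vec a)\ge \frac{c}{2T}|\vec a|^2$ rather than a qualitative limit; a verification that the three excluded angles are exactly those where the bound degenerates (you compute the nontrivial kernels there), so the hypothesis is sharp for this method; and the correct constant $\frac{1}{2T}$ in the Cauchy--Schwarz step, which the paper omits (its displayed inequality $\int_0^T\frac12 m_i\|\dot q_i\|^2\,dt\ge\frac12 m_i\|\int_0^T\dot q_i\,dt\|^2$ is only valid for $T\le 1$, though this does not affect the conclusion).
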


\begin{proof}
For any $\vec{a}\in \Gamma$,
  $$\tilde{\mathcal{A}}(\vec{a}) \geq \sum_{i=1}^{n} \int_{0}^{T}  \frac{1}{2} m_i\|\dot{q}_i(t,\vec{a})\|^2 dt\geq \sum_{i=1}^{n}\frac{1}{2}m_i \left\|\int_{0}^{T} \dot{q}_i(t,\vec{a})dt \right\|^2=\sum_{i=1}^{n}\frac{1}{2}m_i \left\| q_i(T)-q_i(0)\right\|^2.$$
  If $|\vec{a}|\rightarrow +\infty$, then at least one $a_i\rightarrow \infty$ for $i=1,2,\cdots,6$. By the structural prescribed boundary conditions, $\|q_i(T)-q_i(0)\|$ can not remain finite for all $i$ if $|\vec{a}|\rightarrow +\infty$.\\
  In fact, if $a_1\rightarrow \infty$ or $a_2\rightarrow \infty$, $\tilde{\mathcal{A}}(\vec{a}) \geq \frac{1}{2}m_2 \left\| q_2(T)-q_2(0)\right\|^2 +\frac{1}{2}m_4 \left\| q_4(T)-q_4(0)\right\|^2 > \frac{1}{2}m_2 | a_1 \cos (\theta)-a_2\sin(\theta)| +\frac{1}{2}m_4 |a_1 \cos (\theta)+a_2\sin(\theta)| \rightarrow \infty$ since $\theta\not=\frac{\pi}{2}$, $\theta\not=\pi$ and $\theta\not=\frac{3\pi}{2}$. \\
  If $a_3\rightarrow \infty$, $\tilde{\mathcal{A}}(\vec{a}) \geq \frac{1}{2}m_1 \left\| q_1(T)-q_1(0)\right\|^2 +\frac{1}{2}m_3 \left\| q_3(T)-q_3(0)\right\|^2 > \frac{1}{2}m_1 | a_3 \sin (\theta)+a_4| +\frac{1}{2}m_3 |\frac{-m_2a_2-m_4a_2+m_1a_3}{m_3} \sin (\theta)-a_4| \rightarrow \infty$ for any choice of $a_4$. Other cases can be easily obtained by similar arguments.

 \end{proof}

\begin{theorem}[Collision-free]\label{Thm:NC}%No collision.
 For $\theta\in (0,2\pi)\backslash\{\frac{\pi}{2},\pi,  \frac{3\pi}{2}\}$, let $\vec{a}_0$ be a minimizer of $\tilde{\mathcal{A}}(\vec{a})$ over the space $\Gamma$ and the corresponding path $q^*(t)\in S(\vec{a}_0)$. Then $q^*$ satisfying SPBC  is a classical collision-free solution of Newton's equation \eqref{Newton} in the whole interval $[0, T]$.% and it satisfies the structural prescribed boundary conditions (SPBC).
\end{theorem}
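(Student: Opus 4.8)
The plan is to prove Theorem~\ref{Thm:NC} in two stages. First, by Theorem~\ref{Thm:Ex}, $\tilde{\mathcal A}$ is coercive on the finite-dimensional space $\Gamma$, and since $\tilde{\mathcal A}$ is lower semicontinuous on $\Gamma$, a minimizer $\vec a_0$ exists; fix the corresponding minimizing path $q^*\in S(\vec a_0)$. The interior collision-freeness on the open interval $(0,T)$ is already guaranteed by the Marchal--Chenciner result (\cite{CA2,Ma1}): the minimizer of $\mathcal A$ over $\mathcal P(Q_{start},Q_{end})$ with \emph{fixed} endpoints is free of collisions in $(0,T)$, regardless of whether the prescribed endpoints themselves are collision configurations. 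Hence $q^*$ is a classical solution of \eqref{Newton} on $(0,T)$. What remains is to rule out collisions at the two endpoints $t=0$ and $t=T$, and then to upgrade ``classical solution on $(0,T)$'' to ``classical solution on the closed interval $[0,T]$,'' i.e.\ to show $q^*$ extends $C^2$ up to and including the boundary.

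For the endpoint analysis I would exploit the fact that $\vec a_0$ is not merely a path-minimizer but also a critical point of the \emph{outer} functional $\tilde{\mathcal A}(\vec a)$ over $\Gamma$. The key step is a first-variation computation: differentiating $\tilde{\mathcal A}(\vec a) = \mathcal A(q(\cdot;\vec a))$ in $\vec a$ and using that $q(\cdot;\vec a)$ is already a critical point of $\mathcal A$ with fixed endpoints, all interior terms vanish and one is left with boundary terms of the form $\sum_i m_i \dot q_i^*(0)\cdot \partial_{\vec a} q_i(0) - \sum_i m_i \dot q_i^*(T)\cdot \partial_{\vec a} q_i(T)$ (a standard transversality/natural boundary condition). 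Because the matrices $Q_{start}$ and $Q_{end}$ depend \emph{affinely} on $\vec a$ and the parametrization is nondegenerate for $\theta\notin\{\tfrac\pi2,\pi,\tfrac{3\pi}{2}\}$ (this is exactly where the excluded angles re-enter, as in the proof of Theorem~\ref{Thm:Ex}), these transversality conditions force the velocity $\dot q^*$ to be finite at both endpoints — in particular the velocity cannot blow up, which is what would have to happen along a minimizing collision orbit approaching a boundary collision. Concretely, if $q^*$ had a collision at $t=0$ or $t=T$, then by the standard asymptotic estimates for collision orbits (Sundman--Sperling: $|\dot q|\to\infty$ like $|t-t_c|^{-1/3}$ at a collision), the boundary term $\sum_i m_i \dot q_i^*\cdot \partial_{\vec a}q_i$ would be unbounded, contradicting the stationarity of $\tilde{\mathcal A}$ at $\vec a_0$ — unless the colliding cluster happens to lie in the directions of $\partial_{\vec a}q$ that are \emph{not} free, which the affine structure of $Q_{start},Q_{end}$ rules out. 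Alternatively, and perhaps more cleanly, one can use a comparison/blow-up argument: a boundary collision of $q^*$ would make $\tilde{\mathcal A}(\vec a_0)$ beaten by a nearby $\vec a$ obtained by slightly perturbing the colliding component of $\vec a_0$ away from the collision locus and reconnecting by a local variation, lowering the action by the usual Gordon-type estimate; this contradicts minimality of $\vec a_0$.

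Once collisions at $t=0,T$ are excluded, the final step is routine: the potential $U$ is real-analytic in a neighborhood of the (now collision-free) configurations $q^*(0)$ and $q^*(T)$, so the solution of \eqref{Newton} through these configurations is analytic on a closed subinterval; matching with the interior solution on $(0,T)$ by uniqueness of the Cauchy problem shows $q^*\in C^2([0,T],(\mathbf R^2)^4)$ solves \eqref{Newton} on the whole closed interval.

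The main obstacle I anticipate is the endpoint collision exclusion: the Marchal--Chenciner theorem gives nothing at the endpoints, and one genuinely needs to use the outer minimization in $\vec a$. The delicate point is verifying that the prescribed boundary manifolds $\mathbf A,\mathbf B$ (the images of $Q_{start},Q_{end}$ as $\vec a$ ranges over $\Gamma$) are transverse enough to the collision strata that a boundary collision of the minimizer is incompatible with the natural boundary conditions — this is where the hypothesis $\theta\notin\{\tfrac\pi2,\pi,\tfrac{3\pi}{2}\}$ does real work, exactly as it did in ensuring coercivity in Theorem~\ref{Thm:Ex}. I would expect the authors to handle this either via the transversality identity above or via an explicit local competitor that lowers $\tilde{\mathcal A}$.
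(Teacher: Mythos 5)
Your overall architecture matches the paper's: interior collision-freeness on $(0,T)$ comes from Chenciner--Marchal, and endpoint collisions must be excluded using the freedom in $\vec a$, i.e.\ the fact that $q^*$ minimizes over the whole boundary manifolds $\mathbf A,\mathbf B$ rather than with fixed endpoints; your second alternative (an explicit local competitor that pushes the colliding cluster off the collision locus and lowers the action) is indeed the route the paper takes. However, as written the proposal has two genuine gaps. First, your ``transversality'' mechanism is circular: the natural boundary condition $\sum_i m_i\dot q_i^*\cdot\partial_{\vec a}q_i\big|_{t=0}^{t=T}=0$ is derived under the assumption that $\dot q^*$ is finite at the endpoints, so it cannot be invoked to conclude that $\dot q^*$ is finite; if there is a boundary collision the identity is simply unavailable, and ``the boundary term would be unbounded'' contradicts nothing. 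Second --- and this is the actual content of the proof --- the local competitor must be constructed explicitly and shown both to remain in the SPBC class and to strictly lower the action. The paper does this by replacing $q^*$ on $[0,\epsilon]$ with the Sundman--Sperling homothetic ejection orbit $S_2$ (with $r(t)=\gamma t^{2/3}\vec\alpha$), then deforming it to $S_3$ via $r_\delta=r+\delta\phi(t)\vec s$ with $\delta=\epsilon/N$ and a cut-off $\phi$, where $\vec s$ is chosen among $(0,\pm1)R(\theta)$ and $(\pm1,0)R(\theta)$ precisely so that the perturbed configuration stays in $\mathbf A$; the enumeration of which clusters can collide at the boundary (only $\{1,3\}$, $\{2,4\}$, certain triples, or total collision, forced by the structure of $Qstart$ and $Qend$) is what makes such an admissible direction available. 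The decisive quantitative step is $A_{2in}-A_{3in}\geq\left(U_{in}-K_{in}/\tilde N\right)\epsilon/N$ against $|A_{3out}-A_{1out}|\leq U_{out}\,\epsilon^{3/2}$, so the order-$\epsilon$ potential gain beats the order-$\epsilon^{3/2}$ cost for small $\epsilon$. None of this appears in your proposal beyond the phrase ``the usual Gordon-type estimate,'' which is not the relevant tool here (Gordon's theorem concerns the action of Keplerian arcs of fixed period, not local deformations of ejection orbits).

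A smaller misattribution: the hypothesis $\theta\notin\{\frac{\pi}{2},\pi,\frac{3\pi}{2}\}$ is used for coercivity of $\tilde{\mathcal A}$ in Theorem~\ref{Thm:Ex}, hence for the existence of the minimizer; it plays no role in the endpoint deformation argument itself, whose admissible directions $(\pm1,0)R(\theta)$ and $(0,\pm1)R(\theta)$ exist for every $\theta$.
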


\begin{proof}
If $\vec{a}_0$ is a minimizer of $\tilde{\mathcal{A}}(\vec{a})$ over the space $\Gamma$, it is well known that the corresponding path $q^*(t)$ is collision-free in the open interval $(0,T)$. To prove $q^*$ is a classical solution of Newton's equation in the whole interval $[0,T]$, we only need to prove that $Qstart(a_{10},a_{20},a_{30})$ and $Qend(a_{40},a_{50},a_{60})$ have no collision. In fact, there are six cases corresponding to initial collision boundary. (1) $a_{10}\not=0$ and $a_{20}=\mu^{-1}a_{30}$ binary collision ($m_1$ and $m_3$ collide). (2) $a_{10}=0$, $a_{20}\not=-a_{30}$,$a_{20}\not= \mu^{-1}a_{30}$ and $a_{20}\not=\frac{1}{1+2\mu}a_{30}$, binary collision ($m_2$ and $m_4$ collide). (3) $a_{10}=0,$ and $a_{20}=\mu^{-1}a_{30}\not=0$ simultaneous binary collision ($m_1$ and $m_3$ collide and $m_2$ and $m_4$ collide). (4) $a_{10}=a_{20}=a_{30}=0$ total collision. (5) $a_{10}=0,a_{20}=-a_{30}\not=0$ triple collision ($m_1$, $m_2$, and $m_4$ collide). (6) $a_{10}=0,a_{20}=\frac{1}{1+2\mu}a_{30}\not=0$ triple collision ($m_2$, $m_3$, and $m_4$ collide). Similarly, there are six cases corresponding to ending collision boundary.

Since $q$ has no collision in the open interval $(0,T)$, we will then analyze the motion during the closed time interval $[0,\epsilon]$ or $[\epsilon,T]$ and prove the existence of sufficiently small values of $\epsilon$ such that a local deformation has lower action and satisfy the SPBC. The contradiction proves that $q$ can not have this collision. Local deformation argument has appeared in a number of papers such as Chenciner \cite{CA2}, Chen \cite{Chen3}, Ferrario-Terracini \cite{FT}, Marchal \cite{Ma1},  and Terracini-Venturelli \cite{TV} etc. Here we only study the collisions at $t=0$ and similar arguments can be applied for collisions at $t=T$. By the nature of SPBC and the construction of the local deformation, we will prove it in two cases: collision with two bodies and collision with three or more bodies. The proof is almost the same as the proof  in the paper by Ouyang-Xie \cite{OuXie} except the perturbation on the deformation due to the differences of SPBC. We include here for the sake of completeness. \\
{\bf CASE ONE:  Collision with two bodies. }\\
  Suppose that $q$ is a local minimizer of $\mathcal{A}$ satisfying the SPBC for $\vec{a}_0$. Let the collision subset $\mathcal{C}=\{\tau_1,\tau_2 \}\subseteq \{1,2,3,4\}$. At time $t=0$, the bodies $m_{\tau_1}$ and $ m_{\tau_2}$ start at the collision point $q_{\tau_1}(0)=q_{\tau_2}(0)$ while the other bodies are away. By the structual of SPBC, the collsion set $\mathcal{C}$ must be either $\{1, 3\}$ or $\{2,4\}$ which is corresponding to the binary collisions (1), (2) and (3) at $t=0$.

 We will build the two following pathes $S_2$ (Kepler ejection orbits at the starting point) and $S_3$ (the deformation of $S_2$) with: (A) Exactly the same motion of all bodies in the interval $[\epsilon, T)$. (B) At the time interval $[0,\epsilon]$, the ejection orbits are replaced by a collision free orbits with boundary conditions satisfying SPBC. The corresponding actions will be $A_1=\mathcal{A}(q)$, $A_2=\mathcal{A}(S_2)$, $A_3=\mathcal{A}(S_3)$. We want to prove that $A_1>A_3$ for sufficiently small time $\epsilon$. Since (A), the actions are different only in the time interval $[0,\epsilon]$.\\
First, consider the ejection orbits in the starting time interval $[0, \epsilon]$ in $S_2$. Let $r$ be the simple radial two-body motion leading from $0$ to $r_\epsilon$ in the time interval $[0,\epsilon]$. By Sundman and Sperling's estimates near collisions \cite{SH,SK}, there exists a positive constant $\gamma$ such that  $r(t)=(\gamma t^{\frac{2}{3}})\vec{\alpha}$ where $\vec{\alpha}$ is a unit vector.  Let $\xi(t)=\frac{m_{\tau_1}q_{\tau_1}(t)+m_{\tau_2}q_{\tau_2}(t)}{m_{\tau_1}+m_{\tau_2}}$ be the center of mass of the $\tau_1$-th and $\tau_2$-th bodies.
$$  q_{\tau_1S_2}(t)=\xi(t)+\frac{m_{\tau_2}}{m_{\tau_1}+m_{\tau_2}}r(t),q_{\tau_2S_2}(t)= \xi(t)-\frac{m_{\tau_1}}{m_{\tau_1}+m_{\tau_2}}r(t);$$
$$ q_{jS_2}(t)=q_j(t), j\notin\mathcal{C}.$$
%It is well known that $A_{1[0,\epsilon]}\geq A_{2[0,\epsilon]}$  in $[0, \epsilon]$ (see 4.3.2 in \cite{Ma1}). We only need to prove $A_{2[0,\epsilon]}>A_{3[0,\epsilon]}$   in order to prove $A_{1[0,\epsilon]}>A_{3[0,\epsilon]}$ in $[0, \epsilon]$. \\
We consider the deformation of $r(t)$ as \begin{equation}
r_{\delta}(t)=r(t)+\delta \phi(t)\vec{s},
\end{equation}
where $\vec{s}$ is an appropriate unit vector, $\delta =\frac{\epsilon}{N}$ with $N\geq 2\max\{ K_{in}/U_{in}, 4\}$, and
$$\phi(t)=\left\{ \begin{array}{ll} 1, & 0\leq t\leq \delta,\\
\frac{\delta+\tilde{N}\delta-t}{\tilde{N}\delta}, & \delta<t\leq \delta+\tilde{N}\delta,\\
0, & \delta+\tilde{N}\delta<t\leq \epsilon,\end{array}\right.$$
where $K_{in}/U_{in}<\tilde{N}<N-1$. The positive $K_{in}$ and $U_{in}$ are given in the equations \eqref{Kin} and \eqref{Uin} respectively, which are independent of $\epsilon$.

The collision-free motion $S_3$ is denoted by
$$ q_{\tau_1S_3}(t)=\xi(t)+\frac{m_{\tau_2}}{m_{\tau_1}+m_{\tau_2}}r_\delta(t), q_{\tau_2S_3}(t)=\xi(t)-\frac{m_{\tau_1}}{m_{\tau_1}+m_{\tau_2}}r_\delta(t);$$
$$ q_{jS_3}(t)=q_j(t), j\notin \mathcal{C}.$$
We choose $\vec{s}$  to be the unit vector of $(0,\pm 1)R(\theta)$ when $\{\tau_1,\tau_2\}=\{2,4\}$ and  we choose $\vec{s}$  to be the unit vector of $(\pm 1,0)R(\theta)$  when $\{\tau_1,\tau_2\}=\{1,3\}$. The sign will be determined later. So the initial condition of $S_3$ satisfies the SPBC.

Now consider the expression of the actions for each path in the time interval $[0,\epsilon]$. They will be decomposed into two parts: the first part $A_{in}$ is to compute the action of the relative motion of the colliding bodies  $m_{\tau_1}$ and $m_{\tau_2}$; the second part $A_{out}$ is to compute the action of the remainder. It is easy to know that $A_{1in}\geq A_{2in}$   since the homothetic collision-ejection orbit is a minimizer. We only need to prove $A_{2in}-A_{3in}>A_{3out}-A_{1out}$   in order to prove $A_{1}>A_{3}$ in $[0, \epsilon]$.
 We first note that
$$m_{\tau_1}|\dot{q}_{\tau_1S_2}|^2+m_{\tau_2}|\dot{q}_{\tau_2S_2}|^2=m_{\tau_1}\langle \dot{\xi}+\frac{m_{\tau_2}}{m_{\tau_1}+m_{\tau_2}}\dot{r},\dot{\xi}+\frac{m_{\tau_2}}{m_{\tau_1}+m_{\tau_2}}\dot{r} \rangle + $$
$$m_{\tau_2}\langle \dot{\xi}-\frac{m_{\tau_1}}{m_{\tau_1}+m_{\tau_2}}\dot{r},\dot{\xi}-\frac{m_{\tau_1}}{m_{\tau_1}+m_{\tau_2}}\dot{r} \rangle
=(m_{\tau_1}+m_{\tau_2})|\dot{\xi}|^2+\frac{m_{\tau_1}m_{\tau_2}}{m_{\tau_1}+m_{\tau_2}}|\dot{r}|^2.$$
Then
$$A_{2in}-A_{3in}=\int_{0}^{\epsilon}\frac{m_{\tau_1}m_{\tau_2}}{2(m_{\tau_1}+m_{\tau_2})}(|\dot{r}|^2-|\dot{r}_\delta|^2 ) +m_{\tau_1}m_{\tau_2}\left(\frac{1}{|r|}-\frac{1}{|r_\delta|}\right)dt,$$
$$A_{3out}-A_{1out}=\int_{0}^{\epsilon} \sum_{i\notin\mathcal{C}; \tau_j\in\mathcal{C}}\left(\frac{m_im_{\tau_j}}{|q_i-q_{{\tau_j}S_3}|}-\frac{m_im_{\tau_j}}{|q_i-q_{{\tau_j}}|}\right)dt.
$$
Now we estimate the bounds for $A_{out}$. Consider the motion of the mass $m_j$ between the arbitrary successive instants $t_1$ and $t_2$. Because the minimum of the integral $\int_{t_1}^{t_2} \frac{m_j |\dot{q}_j|^2}{2}dt$ between given positions $q_j(t_1)$ and $q_j(t_2)$ is obtained for a constant velocity vector, we can always write $\frac{m_j|q_j(t_2)-q_j(t_1)|^2}{2(t_2-t_1)}\leq \int_{t_1}^{t_2} \frac{m_j |\dot{q}_j|^2}{2}dt \leq \mathcal{A}(q)\leq K< \infty$. So if $0\leq t_1\leq t_2\leq T$, $|q_j(t_2)-q_j(t_1)|\leq \left(\frac{2K(t_2-t_1)}{m_j}\right)^{1/2}$. Pick up $\epsilon >0$ small such that the two bodies $m_{\tau_1}$ and $m_{\tau_2}$ will remain at less than twice that distance from the collision point $q_{\tau_1}(0)$ all along the time interval $[0, \epsilon]$, i.e. $|q_{\tau_1}-q_{\tau_2}|\leq J\sqrt{\epsilon}$, where $J=2(2K)^{1/2}$. $m_j, j\notin\mathcal{C}$  will remain outside of the circle centered at the collision point with radius $D$ and $J\sqrt{\epsilon}\leq J\sqrt{\epsilon_0}\ll D$ for a fixed $\epsilon_0$. So during the time interval $[0, \epsilon]$, the   bodies $m_j, j\notin\mathcal{C}$ are outside of the circle with radius $D$ and center $q_{\tau_1}(0)$, while the bodies $m_{\tau_1}$ and $m_{\tau_2}$ are inside the much smaller circle of the same center and radius $J\sqrt{\epsilon}$.
$$|A_{3out}-A_{1out}|\leq \int_{0}^{\epsilon} \sum_{i\notin\mathcal{C}; \tau_j\in\mathcal{C}} m_im_{\tau_j}\left|\left(\frac{|q_i-q_{{\tau_j}S_3}|-|q_i-q_{{\tau_j} }| }{|q_i-q_{{\tau_j} }| |q_i-q_{{\tau_j}S_3}|} \right)\right|dt.
$$
$$  \leq \int_{0}^{\epsilon} \sum_{i\notin\mathcal{C}; j\in\mathcal{C}}m_im_{\tau_j}\left(\frac{|q_{{\tau_j}S_3}-q_{{\tau_j} }| }{|q_i-q_{{\tau_j} }| |q_i-q_{{\tau_j}S_3}|} \right)dt.
$$
\begin{equation}\label{Uout} \leq \int_{0}^{\epsilon} \sum_{i\notin\mathcal{C}; \tau_j\in\mathcal{C}} m_im_{\tau_j} \left(\frac{J\sqrt{\epsilon} }{(D-J\sqrt{\epsilon_0})^2} \right)dt= \frac{4J}{(D-J\sqrt{\epsilon_0})^2}\epsilon^{\frac{3}{2}}=U_{out}\epsilon^{\frac{3}{2}}.
\end{equation}

Let us compute $A_{2in}-A_{3in}$. By choosing appropriate direction of $\vec{s}$
 such that  $\langle r,\vec{s}\rangle\geq 0$,
$$\int_{0}^{\epsilon}\frac{m_{\tau_1}m_{\tau_2}}{2(m_{\tau_1}+m_{\tau_2})}(|\dot{r}|^2-|\dot{r}_\delta|^2 )dt=-\int_{0}^{\epsilon} \frac{m_{\tau_1}m_{\tau_2}}{2(m_{\tau_1}+m_{\tau_2})}(2\delta\dot{\phi}\langle r,\vec{s}\rangle +(\delta\dot{\phi})^2)dt$$
$$\geq -\int_{\delta}^{\delta+\tilde{N}\delta}\frac{m_{\tau_1}m_{\tau_2}}{2(m_{\tau_1}+m_{\tau_2})} (\delta\dot{\phi})^2dt =-\int_{\delta}^{\delta+\tilde{N}\delta} \frac{m_{\tau_1}m_{\tau_2}}{2(m_{\tau_1}+m_{\tau_2})}( -\frac{1}{\tilde{N}})^2dt$$
\begin{equation}\label{Kin}
\geq-\frac{m_{\tau_1}m_{\tau_2}}{2(m_{\tau_1}+m_{\tau_2})}\frac{\delta}{\tilde{N}}=-K_{in}\frac{\delta}{\tilde{N}}.
\end{equation}

$$\int_{0}^{\epsilon}\left(\frac{1}{|r|}-\frac{1}{|r_\delta|}\right)dt= \int_{0}^{\epsilon}\left(\frac{1}{|r|}-\frac{1}{(|r|^2+2\delta\phi\langle r,\vec{s}\rangle+(\delta \phi)^2)^{1/2}}\right)dt
$$
$$=\int_{0}^{\epsilon}\left(\frac{2\delta\phi\langle r,\vec{s}\rangle+(\delta \phi)^2}{|r|(|r|^2+2\delta\phi\langle r,\vec{s}\rangle+(\delta \phi)^2)^{1/2} (|r|+ (|r|^2+2\delta\phi\langle r,\vec{s}\rangle+(\delta \phi)^2)^{1/2})}\right)dt
$$
$$\geq  \int_{0}^{\delta}\left(\frac{(\delta )^2}{|r|(|r|+\delta) (2|r|+ \delta )}\right)dt \geq  \int_{0}^{\delta}\left(\frac{1}{\gamma(\gamma+\delta^{1/3}) (2\gamma+ \delta^{1/3} )}\right)dt
$$
\begin{equation}\label{Uin}
\geq \left(\frac{1}{\gamma(\gamma+1) (2\gamma+ 1)}\right)\delta=U_{in}\delta,
 \end{equation}
where we use the fact $|r|\leq \gamma\delta^{2/3}$ in $[0,\delta]$ and $\delta<1$. \\
So $A_{2in}-A_{3in}>\left(-K_{in}\frac{\delta}{\tilde{N}}+ U_{in}\delta\right)=\left(-\frac{K_{in}}{\tilde{N}}+U_{in}\right)\frac{\epsilon}{N}>U_{out}\epsilon^{\frac{3}{2}}\geq A_{3out}-A_{1out}$
for small $\epsilon$, which implies $A_1>A_3$. \\
 The action of $S_3$ is smaller than the action of $S_1$ which contradicts the fact that $S_1$ is a minimizer.
The contradiction completes the proof that the vector $\vec{a}_0$ with  binary collision   is not a minimizer of $\mathcal{A}$ on $\Gamma$. \\

 %%%%%%%%%%%%%%%%%%%%%%%%%%%%%%%%%%%%
{\bf CASE TWO: Collisions with three or more bodies.}\\
We can give a unify proof as we did in paper \cite{OuXie}. But for the sake of clarity and simplicity, we only prove the triple collision case (5) $a_{10}=0,a_{20}=-a_{30}\not=0$, where $m_1$, $m_2$, and $m_4$ collide while $m_3$ is away.
We will build the two similar solutions $S_2$ (Kepler ejection orbits at the starting point) and $S_3$ (deformation of $S_2$) as we built for binary collisions.
First, consider the ejection orbits in the starting time interval $[0, \epsilon]$ in $S_2$. By \cite{Sa,SH}, the configuration of the colliding bodies $m_1, m_2, m_4$ is approaching the set of central configurations. Let $\bar{\omega}=(\bar\omega_1, \bar\omega_2,\bar\omega_4)$ be the translation of the central configuration $(q_1(\epsilon),q_2(\epsilon), q_4(\epsilon))$ by shifting center of mass at origin. By Sundman and Sperling's estimates near collisions \cite{SH,SK}, the homothetic collision-ejection orbit is given by $\omega_i(t)=\bar{\omega}_i t^{2/3}$, $i=1,2,4$ and $t\in [0,\epsilon]$.  Let $\xi(t)=\frac{m_1q_1(t)+m_2q_2(t)+m_4q_4(t)}{m_1+m_2+m_4}$ be the center of mass of the colliding bodies.
$$ q_{1S_2}(t)=\xi(t)+\omega_1(t), q_{2S_2}(t)=\xi(t)+\omega_2(t),$$
$$ q_{3S_2}(t)=q_3(t), q_{4S_2}(t)=\xi(t)+\omega_4(t).$$

We consider the deformation of $\omega_i(t)$ as \begin{equation}
\omega_{1\delta}(t)=\omega_1(t)-\frac{1}{m_1}\delta \phi(t)\vec{s}_1,
\end{equation}
\begin{equation}
\omega_{2\delta}(t)=\omega_2(t)+\frac{1}{2m_2}\delta \phi(t)\vec{s}_1+\frac{1}{2m_2}\delta \phi(t)\vec{s}_2,
\end{equation}
\begin{equation}
\omega_{4\delta}(t)=\omega_4(t)+\frac{1}{2m_4}\delta \phi(t)\vec{s}_1-\frac{1}{2m_4}\delta \phi(t)\vec{s}_2,
\end{equation}
where $\vec{s}_1, \vec{s}_2$ are two appropriate unit vectors of $(0,\pm 1)R(\theta)$ or $(\pm 1, 0)R(\theta)$, $\delta =\frac{\epsilon}{N}$ with $N\geq 2\max\{ K_{in}/U_{in}, 4\}$, and
$$\phi(t)=\left\{ \begin{array}{ll} 1, & 0\leq t\leq \delta,\\
\frac{\delta+\tilde{N}\delta-t}{\tilde{N}\delta}, & \delta<t\leq \delta+\tilde{N}\delta,\\
0, & \delta+\tilde{N}\delta<t\leq \epsilon.\end{array}\right.$$
where $K_{in}/U_{in}<\tilde{N}<N-1$. The positive $K_{in}$ and $U_{in}$ are constants given in the equations \eqref{Kin} and \eqref{Uin} respectively. The deformation $S_3$ is denoted by
$$ q_{1S_3}(t)=\xi(t)+\omega_{1\delta}(t), q_{2S_3}(t)=\xi(t)+\omega_{2\delta}(t),$$
$$ q_{3S_3}(t)=q_3(t), q_{4S_3}(t)=\xi(t)+\omega_{4\delta}(t).$$
At $t=0$, $q_{1S_3}(0)=(0,-a_{30}\pm\delta/m_1)R(\theta), q_{2S_3}(0)=(\pm \delta/m_2,-a_{30}\mp\delta/(2m_2))R(\theta),$ $q_{3S_3}(0)=(0,(2\mu+1)a_{30})R(\theta),$ $q_{4S_3}(0)=(\mp \delta/m_4,-a_{30}\mp\delta/(2m_4))R(\theta)$. So the initial conditions of $S_3$ satisfy the SPBC.\\
Now consider the expression of the actions for each path in the time interval $[0,\epsilon]$. They will be decomposed into two parts: the first part $A_{in}$ is to compute the action of the relative motion of the colliding bodies $m_1$, $m_2$ and $m_4$; the second part $A_{out}$ is to compute the action of the remainder.

It is easy to know that $A_{1in}\geq A_{2in}$   since the homothetic collision-ejection orbit is a minimizer. We only need to prove $A_{2in}-A_{3in}>A_{3out}-A_{1out}$   in order to prove $A_{1}>A_{3}$ in $[0, \epsilon]$.

First of all, we estimate $A_{3out}-A_{1out}$. Consider the motion of the mass $m_j$ between the arbitrary successive instants $t_1$ and $t_2$. Because the minimum of the integral $\int_{t_1}^{t_2} \frac{m_j |\dot{q}_j|^2}{2}dt$ between given positions $q_j(t_1)$ and $q_j(t_2)$ is obtained for a constant velocity vector, we can always write $\frac{m_j|q_j(t_2)-q_j(t_1)|^2}{2(t_2-t_1)}\leq \int_{t_1}^{t_2} \frac{m_j |\dot{q}_j|^2}{2}dt \leq \mathcal{A}(q)\leq K< \infty$. So if $0\leq t_1\leq t_2\leq T$, $|q_j(t_2)-q_j(t_1)|\leq \left(\frac{2K(t_2-t_1)}{m_j}\right)^{1/2}$. Pick up $\epsilon >0$ small such that the three bodies $m_1$, $m_2$ and $m_4$ will remain in a circle with radius $J\sqrt{\epsilon}$ from the collision point $(0,-a_{30})$ all along the time interval $[0, \epsilon]$, i.e. where $J=(2K/\min\{m_i\})^{1/2}$. Then $|q_i-q_j|\leq 2J\sqrt{\epsilon}$ for $i,j=1,2,4$.   $m_3$ will remain outside of the circle centered at the collision point with radius $D$ and $J\sqrt{\epsilon}\leq J\sqrt{\epsilon_0}\ll D$. So during the time interval $[0, \epsilon]$, the body $m_3$ is outside of the circle with radius $D$ and center $(0,-a_{30})$, while the bodies $m_1$, $m_2$ and $m_4$ are inside the much smaller circle with the same center and radius $J\sqrt{\epsilon}$. %The bodies in solution $S_2$ are also within the circles. By picking up an appropriate large $N$, the bodies in solution $S_3$ are also within the circles.

$$|A_{3out}-A_{1out}|=\left|\int_{0}^{\epsilon} \frac{1}{2}m_3|\dot{q}_{3S_3}(t)|^2 -\frac{1}{2}m_3|\dot{q}_{3}(t)|^2 + \sum_{i=1,2,4}\frac{m_im_3}{|q_{iS_3}-q_{3S_3}|} -\frac{m_im_3}{|q_i-q_{3}|}dt\right|
$$
$$=\left|
\int_{0}^{\epsilon} \sum_{i=1,2,4}\frac{m_im_3(|q_i-q_{3}|-|q_{iS_3}-q_{3S_3}|)}{|q_{iS_3}-q_{3S_3}||q_i-q_{3}|}dt\right| \leq \int_{0}^{\epsilon} \sum_{i=1,2,4}\frac{m_im_3(|q_i-q_{iS_3}|)}{|q_{iS_3}-q_{3S_3}||q_i-q_{3}|}dt$$
\begin{equation}\label{Uout}
\leq \int_{0}^{\epsilon} \frac{2Mm_3J\sqrt{\epsilon}}{(D-J\sqrt{\epsilon})^2}dt\leq \frac{2Mm_3J{\epsilon}^{3/2}}{(D-J\sqrt{\epsilon_0})^2}=U_{out}{\epsilon}^{3/2}.
\end{equation}

Now we compute $$A_{2in}-A_{3in}=\int_{0}^{\epsilon}\sum_{i=1,2,4}\frac{1}{2}m_i(|\dot{q}_{iS_2}|^2-|\dot{q}_{iS_3}|^2)+\sum_{i<j=1,2,4}\frac{m_i m_j}{|q_{iS_2}-q_{jS_2}|}-\frac{m_i m_j}{|q_{iS_3}-q_{jS_3}|}dt.$$
Because $m_2=m_4$ and $\omega_i(t)-\omega_j(t)=(\bar{\omega}_i-\bar{\omega}_j)t^{2/3}$ in $[0,\epsilon]$, we are able to pick up the appropriate direction vector $\vec{s}_1$ and $\vec{s}_2$ such that the inner product $$\langle\omega_2(t)-\omega_4(t),\left( \frac{1}{2m_2}+\frac{1}{2m_4}\right)\delta \phi(t)\vec{s}_2 \rangle\geq 0$$ and
$$\langle\omega_2(t)-\omega_1(t),\left( \frac{1}{2m_2}+\frac{1}{m_1}\right)\delta \phi(t)\vec{s}_1 + \left( \frac{1}{2m_2}\right)\delta \phi(t)\vec{s}_2 \rangle\geq 0.$$
Because $\bar\omega_1$, $\bar\omega_2,$ and $\bar\omega_4$ is a central configuration, they are either collinear or equilateral triangle. So  $$\langle\omega_4(t)-\omega_1(t),\left( \frac{1}{2m_4}+\frac{1}{m_1}\right)\delta \phi(t)\vec{s}_1 - \left( \frac{1}{2m_4}\right)\delta \phi(t)\vec{s}_2 \rangle\geq 0.$$
Since both the centers  of mass of central configurations $\omega(t)$ and $\omega_\delta(t)$ are at origin, we have the kinetic energy
$$\sum_{i=1,2,4}\frac{1}{2}m_i|\dot{q}_{iS_2}|^2=\sum_{i=1,2,4}\frac{1}{2}m_i(|\dot{\xi}|^2+|\dot{\omega}_i(t)|^2) \hbox{ and } \sum_{i=1,2,4}\frac{1}{2}m_i|\dot{q}_{iS_3}|^2=\sum_{i=1,2,4}\frac{1}{2}m_i(|\dot{\xi}|^2+|\dot{\omega}_{i\delta}(t)|^2).$$
$$\int_{0}^{\epsilon}\sum_{i=1,2,4}\frac{1}{2}m_i(|\dot{q}_{iS_2}|^2-|\dot{q}_{iS_3}|^2)dt=\int_{0}^{\epsilon} \sum_{i=1,2,4}\frac{1}{2}m_i (|\dot{\omega}_i(t)|^2-|\dot{\omega}_{i\delta}(t)|^2)dt$$
$$\geq -\int_{\delta}^{\delta+\tilde{N}\delta} \frac{1}{2}m_1\left|\frac{1}{m_1}\delta \dot\phi(t)\vec{s}_1\right|^2+\frac{1}{2}m_2\left|\frac{1}{2m_2} \delta \dot{\phi}(t)\vec{s}_1+\frac{1}{2m_2} \delta \dot{\phi}(t)\vec{s}_2\right|^2 $$
 $$+ \frac{1}{2}m_4 \left|\frac{1}{2m_4} \delta \dot{\phi}(t)\vec{s}_1-\frac{1}{2m_4} \delta \dot{\phi}(t)\vec{s}_2\right|^2 dt$$
\begin{equation}\label{Kin}
\geq- \left( \frac{1}{2 m_1} +\frac{1}{4 m_2} +\frac{1}{4 m_4} \right)\left(\frac{\delta}{\tilde{N}}\right)\equiv -K_{in}\frac{\delta}{\tilde{N} }.
\end{equation}
Let $m_{12}=\left(\left( \frac{1}{m_1}+\frac{1}{2m_2}\right)^2 +\frac{1}{4m_2^2}\right)^{\frac{1}{2}}$. For potential energy, we have
$$\int_{0}^{\epsilon}\sum_{i<j=1,2,4}\frac{m_i m_j}{|q_{iS_2}-q_{jS_2}|}-\frac{m_i m_j}{|q_{iS_3}-q_{jS_3}|}dt =  $$
$$=\int_{0}^{\epsilon}\sum_{i<j=1,2,4} m_im_j \frac{|\omega_{i\delta}(t)-\omega_{j\delta}(t)|-|\omega_i(t)-\omega_j(t)|}{|\omega_i(t)
-\omega_j(t)||\omega_{i\delta}(t)-\omega_{j\delta}(t)|}$$
$$\geq \int_{0}^{\epsilon}\sum_{i<j=1,2,4} \frac{m_im_j |\omega_{i\delta}-\omega_i+ \omega_j-\omega_{j\delta} |^2}{|\omega_i(t)
-\omega_j(t)||\omega_{i\delta}(t)-\omega_{j\delta}(t)| \left(|\omega_{i\delta}(t)-\omega_{j\delta}(t)|+|\omega_i(t)-\omega_j(t)|\right)}dt$$
$$\geq \int_{0}^{\epsilon}\frac{m_1m_2 |\omega_{1\delta}-\omega_1+ \omega_2-\omega_{2\delta} |^2}{|\omega_1(t)
-\omega_2(t)||\omega_{1\delta}(t)-\omega_{2\delta}(t)| \left(|\omega_{1\delta}(t)-\omega_{2\delta}(t)|+|\omega_1(t)-\omega_2(t)|\right)}dt$$
$$\geq
\int_{0}^{\delta}\frac{m_1m_2m_{12}^2\delta^2}{|\omega_1(t)
-\omega_2(t)||\omega_{1\delta}(t)-\omega_{2\delta}(t)| \left(|\omega_{1\delta}(t)-\omega_{2\delta}(t)|+|\omega_1(t)-\omega_2(t)|\right)}dt$$
$$\geq \int_{0}^{\delta} \frac{m_1m_2m_{12}^2 }{|\bar{\omega}_1-\bar{\omega}_2|\left(|\bar{\omega}_1-\bar{\omega}_2|+m_{12}\delta^{1/3}\right) \left(2|\bar{\omega}_1-\bar{\omega}_2|+m_{12}\delta^{1/3}\right)} dt $$
\begin{equation}\label{Uin}
\geq \frac{m_1m_2m_{12}^2 }{|\bar{\omega}_1-\bar{\omega}_2|\left(|\bar{\omega}_1-\bar{\omega}_2|+m_{12}\right) \left(2|\bar{\omega}_1-\bar{\omega}_2|+m_{12}\right)}\delta \equiv U_{in}\delta.
\end{equation}
From the above estimations \eqref{Uout},\eqref{Kin} and \eqref{Uin}, by picking small enough $\epsilon$, we have
$$A_{2in}-A_{3in}\geq \left(U_{in}-\frac{K_{in}}{\tilde{N}}\right)\delta =\left(U_{in}-\frac{K_{in}}{\tilde{N}}\right)\frac{\epsilon}{N} \geq U_{out}{\epsilon}^{3/2}>A_{3out}-A_{1out}.$$
This completes the proof that the minimizer can not have the triple collision.

\end{proof}

Let $\mathbf{A}$ and $\mathbf{B}$ be two proper linear subspaces of $(\mathbf{R}^2)^4$ which are given as $$\mathbf{A}=\left\{ \left. \left( \begin{array}{cc} 0 & -a_3\\
-a_1 & a_2\\ 0 & \frac{-m_2a_2-m_4a_2+m_1a_3}{m_3}\\
a_1 & a_2 \end{array} \right) R(\theta) \in (\mathbf{R}^2)^4\right| (a_1,a_2,a_3)\in \mathbf{R}^3 \right\}$$ and $$\mathbf{B}=\left\{ \left.  \left( \begin{array}{cc} a_4 & a_5\\
0 & -a_6\\ -a_4 & a_5\\
0 & \frac{-m_1a_5-m_3a_5+m_2 a_6}{m_4} \end{array} \right) \in (\mathbf{R}^2)^4\right| (a_4,a_5,a_6)\in \mathbf{R}^3 \right\}.$$
Let us consider the action functional $\mathcal{A}$ defined in \eqref{Action} over the function space $$\mathcal{P}(\mathbf{A},\mathbf{B}):=\{q\in H^1([0,T],(\mathbf{R}^2)^4) | q(0)\in \mathbf{A}, q(T)\in \mathbf{B}\}.$$
It is easy to prove the theorem of equivalence below.
\begin{theorem}[Equivalence]\label{Thm:EQ}
$\vec{a}_0\in \Gamma$ with corresponding path $q^*\in S(\vec{a}_0)$ satisfying $q^*(0)=Qstart$ and $q^*(T)=Qend$ is a minimizer of $\tilde{\mathcal{A}}(\vec{a})$ over the space $\Gamma$, if and only if, $q^*$ is a minimizer of $\mathcal{A}$ over the function space $\mathcal{P}(\mathbf{A},\mathbf{B})$ with $q^*(0)=Qstart\in \mathbf{A}$ and $q^*(T)=Qend \in \mathbf{B}$.
\end{theorem}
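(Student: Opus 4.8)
The plan is to exhibit the constrained path space $\mathcal{P}(\mathbf{A},\mathbf{B})$ as a fibration over the parameter space $\Gamma$ with fibers $\mathcal{P}(Qstart,Qend)$, and then to compare infima fiber by fiber. First I would record the elementary structural facts behind the two SPBC matrices: $Qstart$ depends only on $(a_1,a_2,a_3)$ and $Qend$ only on $(a_4,a_5,a_6)$, and as $(a_1,a_2,a_3)$ runs over $\mathbf{R}^3$ the point $Qstart$ runs over \emph{all} of $\mathbf{A}$, and similarly $Qend$ runs over all of $\mathbf{B}$ — indeed the defining matrices are injective linear functions of these triples, so $\vec{a}\mapsto(Qstart,Qend)$ is a linear bijection onto $\mathbf{A}\times\mathbf{B}$, though only surjectivity is actually needed. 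Consequently
\[
\mathcal{P}(\mathbf{A},\mathbf{B})=\bigcup_{\vec{a}\in\Gamma}\mathcal{P}\bigl(Qstart(\vec{a}),Qend(\vec{a})\bigr),
\]
since a path $q$ has $q(0)\in\mathbf{A}$ and $q(T)\in\mathbf{B}$ precisely when its endpoints coincide with $Qstart(\vec{a})$ and $Qend(\vec{a})$ for some (unique) $\vec{a}$.

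Next I would pass to infima. By the Chenciner--Marchal collision-free theorem cited above, for every $\vec{a}$ the infimum of $\mathcal{A}$ over $\mathcal{P}(Qstart,Qend)$ is attained by a collision-free minimizer, so $S(\vec{a})\neq\emptyset$ and $\tilde{\mathcal{A}}(\vec{a})=\inf_{\mathcal{P}(Qstart,Qend)}\mathcal{A}$, the value being independent of which representative of $S(\vec{a})$ one picks. Taking the infimum over the fibers in the displayed union gives
\[
\inf_{q\in\mathcal{P}(\mathbf{A},\mathbf{B})}\mathcal{A}(q)=\inf_{\vec{a}\in\Gamma}\ \inf_{q\in\mathcal{P}(Qstart,Qend)}\mathcal{A}(q)=\inf_{\vec{a}\in\Gamma}\tilde{\mathcal{A}}(\vec{a}),
\]
exactly the double infimum appearing in \eqref{VAR}. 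Both sides are in fact minima: the left because $\mathcal{A}$ is coercive on $\mathcal{P}(\mathbf{A},\mathbf{B})$ (the kinetic term forces the $H^1$-norm to blow up along any minimizing sequence, using that the endpoint displacements are controlled as in Theorem \ref{Thm:Ex}) and weakly lower semicontinuous, while $\mathcal{P}(\mathbf{A},\mathbf{B})$ is weakly closed since the trace $q\mapsto(q(0),q(T))$ is weakly continuous from $H^1$ to $(\mathbf{R}^2)^8$ and $\mathbf{A},\mathbf{B}$ are closed linear subspaces; the right because of Theorem \ref{Thm:Ex} together with the lower semicontinuity of $\tilde{\mathcal{A}}$ on $\Gamma$.

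It then remains to check the two implications. If $q^*$ minimizes $\mathcal{A}$ over $\mathcal{P}(\mathbf{A},\mathbf{B})$, let $\vec{a}_0$ be the parameter with $q^*(0)=Qstart(\vec{a}_0)$ and $q^*(T)=Qend(\vec{a}_0)$; then $q^*\in\mathcal{P}(Qstart,Qend)$, and $q^*$ must minimize $\mathcal{A}$ in that fiber, because any competitor sharing these endpoints again lies in $\mathcal{P}(\mathbf{A},\mathbf{B})$. Hence $q^*\in S(\vec{a}_0)$ and $\tilde{\mathcal{A}}(\vec{a}_0)=\mathcal{A}(q^*)=\min_{\mathcal{P}(\mathbf{A},\mathbf{B})}\mathcal{A}=\min_{\vec{a}\in\Gamma}\tilde{\mathcal{A}}(\vec{a})$, so $\vec{a}_0$ minimizes $\tilde{\mathcal{A}}$ over $\Gamma$. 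Conversely, if $\vec{a}_0$ minimizes $\tilde{\mathcal{A}}$ and $q^*\in S(\vec{a}_0)$, then $q^*\in\mathcal{P}(\mathbf{A},\mathbf{B})$ and $\mathcal{A}(q^*)=\tilde{\mathcal{A}}(\vec{a}_0)=\min_{\vec{a}\in\Gamma}\tilde{\mathcal{A}}(\vec{a})=\min_{\mathcal{P}(\mathbf{A},\mathbf{B})}\mathcal{A}$ by the identity above, so $q^*$ is a minimizer of $\mathcal{A}$ over $\mathcal{P}(\mathbf{A},\mathbf{B})$.

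I do not expect a genuine obstacle here: once the fibration of $\mathcal{P}(\mathbf{A},\mathbf{B})$ over $\Gamma$ is observed, the statement is a matter of unwinding definitions and of the routine identity $\inf\inf=\inf$ together with identification of the attaining arguments. The only point that requires a word of care is the weak closedness of the endpoint constraint (where the finite dimensionality, hence closedness, of $\mathbf{A}$ and $\mathbf{B}$ is used) and the nonemptiness and action-value well-definedness of $S(\vec{a})$, both of which are supplied by the already-cited collision-free result and by Theorems \ref{Thm:Ex} and \ref{Thm:NC}.
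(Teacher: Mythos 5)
Your proof is correct and is essentially the argument the paper intends: the paper omits the proof as ``easy,'' but your fiberwise decomposition of $\mathcal{P}(\mathbf{A},\mathbf{B})$ over $\Gamma$ and the resulting identity $\inf_{\mathcal{P}(\mathbf{A},\mathbf{B})}\mathcal{A}=\inf_{\vec{a}\in\Gamma}\tilde{\mathcal{A}}(\vec{a})$ are exactly the double infimum already displayed in \eqref{VAR}, with the injectivity of $\vec{a}\mapsto(Qstart,Qend)$ supplying the unique fiber containing a given path. The only remark worth making is that your aside on attainment of both infima (coercivity and weak closedness of $\mathcal{P}(\mathbf{A},\mathbf{B})$) is not needed for the equivalence itself, since in each direction the hypothesized minimizer already witnesses attainment on both sides of the identity.
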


Now it is ready to prove theorem \ref{main1}.
\begin{proof}[Proof of Theorem \ref{main1}.]

By theorem \ref{Thm:NC}, any path $q^*(t)$ corresponding to a local minimizer $\vec{a}_0$ is a classic solution in the interval $[0, T]$. We only need to prove that it can be extended to a classical solution by the extension formula \eqref{qet}.  \\
Because $q^*(t)$ is a classic solution of Newton's equation \eqref{Newton} on $[0,T]$, it is easy to check that $q(t)$ is a classical solution in each interval $((n-1)T, nT)$ for any given positive integer $n$. To prove $q(t)$ is a classical solution for all real $t$, we need to  prove that $q(t)$ is connected very well at $t=nT$ for any integer $n$, i.e. $\lim_{t\rightarrow (nT)^-} q(t)=\lim_{t\rightarrow (nT)^+} q(t)$ and $\lim_{t\rightarrow (nT)^-} \dot{q}(t)=\lim_{t\rightarrow (nT)^+} \dot{q}(t)$. By the structure of  the extension equation \eqref{qet}, we only need prove it for $n=1$ and $n=2$. \\
By the SPBC, at $n=1$, we have $\lim_{t\rightarrow (T)^-} q(t)=\lim_{t\rightarrow (T)^+} q(t)$ because
  $$\lim_{t\rightarrow (T)^-} q(t)=\left(q_1^*(T),q_2^*(T),q_3^*(T),q_4^*(T)\right) =Q_{end}, $$
  and
  $$\lim_{t\rightarrow (T)^+} q(t)=\left(q_3^*(T),q_2^*(T),q_1^*(T),q_4^*(T)\right)B=Q_{end}.$$
    At $n=2$, we have $\lim_{t\rightarrow (2T)^-} q(t)=\lim_{t\rightarrow (2T)^+} q(t)$ because
  $$\lim_{t\rightarrow (2T)^-} q(t)  =\left(q_3^*(0),q_2^*(0),q_1^*(0),q_4^*(0)\right)B=\left( \begin{array}{cc}
0 & \frac{-m_2a_2-m_4a_2+m_1a_3}{m_3}\\ -a_1 & a_2\\ 0 & -a_3\\
a_1 & a_2 \end{array} \right) R(\theta)B,$$
  and $$\lim_{t\rightarrow (2T)^+} q(t)=\left(q_3^*(0),q_4^*(0),q_1^*(0),q_2^*(0)\right)R(-2\theta)= \left( \begin{array}{cc}0 & \frac{-m_2a_2-m_4a_2+m_1a_3}{m_3}\\
a_1 & a_2\\ 0 & -a_3\\
-a_1 & a_2 \end{array} \right) R(-\theta).$$
That $\lim_{t\rightarrow (nT)^-} \dot{q}(t)=\lim_{t\rightarrow (nT)^+} \dot{q}(t)$ at $n=1$ and $n=2$ is equivalent to the relations given by \eqref{TT} and \eqref{T0} below.
At
$t=T$, \begin{equation}\label{TT}
\dot{q}_{11}(T)=\dot{q}_{31}(T), \dot{q}_{12}(T)=-\dot{q}_{32}(T), \dot{q}_{22}(T)=\dot{q}_{42}(T)=0,
\end{equation}
and at $t=2T$,
\begin{equation}\label{T0}
 \begin{array}{ll}
\dot{q}_{1}(0)=(\dot{q}_{11}(0), -\dot{q}_{12}(0))R(2\theta), &
\dot{q}_{2}(0)=(\dot{q}_{41}(0), -\dot{q}_{42}(0))R(2\theta),\\
\dot{q}_{3}(0)=(\dot{q}_{31}(0), -\dot{q}_{32}(0))R(2\theta),&
\dot{q}_{4}(0)=(\dot{q}_{21}(0), -\dot{q}_{22}(0))R(2\theta).
\end{array}
\end{equation}

Now we prove the equalities \eqref{TT} and \eqref{T0}. Since $\vec{a}_0\in \Gamma$ is a minimizer of $ \tilde{\mathcal{A}}(\vec{a})$ over $\Gamma$, $q^*$ is a minimizer of $\mathcal{A}$ over the function space $\mathcal{P}(\mathbf{A},\mathbf{B})$ by theorem \ref{Thm:EQ}. Here we use $q$ for $q^*$ by our extension formula \eqref{qet}. Consider an admissible variation  $\xi\in \mathcal{P}(\mathbf{A},\mathbf{B})$ with $\xi(0)\in \mathbf{A}$ and $\xi(T)\in \mathbf{B}$, then the first variation $\delta_\xi\mathcal{A}(q)$ is computed as:
  $$\delta_\xi\mathcal{A}(q)=\lim_{\delta\rightarrow 0}\frac{\mathcal{A}(q+\delta \xi)-\mathcal{A}(q)}{\delta}$$
    $$=\int_{0}^{T}\frac{1}{2}\sum_{i=1}^{4}\lim_{\delta\rightarrow 0} m_i \frac{\|\dot{q}_i +\delta \dot{\xi}_i\|^2-\|\dot{q}_i \|^2}{\delta}+ \lim_{\delta\rightarrow 0}\frac{U(q+\delta \xi)-U(q)}{\delta}dt$$
$$=\int_{0}^{T} \left (\sum_{i=1}^{4} m_i <\dot{q}_i, \dot{\xi}_i>+ \sum_{i=1}^4<\frac{\partial }{\partial q_i}(U(q(t))),\xi_i>\right)dt$$
$$=\sum_{i=1}^{4}\left.m_i <\dot{q}_i, \xi_i>\right|_{t=0}^{t=T} + \int_{0}^{T} <-m_i \ddot{q}_i+\frac{\partial }{\partial q_i}(U(q(t))),\xi_i>dt. $$
Because the first variation $\delta_\xi\mathcal{A}(q)$ is zero for any $\xi$, and $q$ satisfies Newton's equation \eqref{Newton}, we have
\begin{equation}\label{vaeq}
\delta_\xi\mathcal{A}(q)=\sum_{i=1}^{4}\left(m_i <\dot{q}_i(T),\xi_i(T)> \right)-\sum_{i=1}^{4}\left(m_i<\dot{q}_i(0),\xi_i(0)>\right)=0.
\end{equation}
For $i=4,5,6,$ let $\xi^{(i)}(t)\in \mathcal{P}(\mathbf{A},\mathbf{B})$ satisfy $\xi^{(i)}(0)=0$ and
$$\xi^{(i)}(T)=\left( \begin{array}{cc} a_4 & a_5\\
0 & -a_6\\ -a_4 & a_5\\
0 & \frac{-m_1a_5-m_3a_5+m_2 a_6}{m_4} \end{array} \right), $$  where $a_i=1,$ $ a_j=0$  if $j\not=i.$ Then from equation \eqref{vaeq},
$$ \delta_{\xi^{(4)}}\mathcal{A}(q)=(m_1\dot{q}_{11}(T)-m_3\dot{q}_{31}(T))=0,$$
$$ \delta_{\xi^{(5)}}\mathcal{A}(q)=(m_1 \dot{q}_{12}(T)+m_3\dot{q}_{32}(T)-(m_1+m_3)\dot{q}_{42}(T))=0,$$
$$\delta_{\xi^{(6)}}\mathcal{A}(q)=(-m_2\dot{q}_{22}(T)+m_2\dot{q}_{42}(T))=0.$$
By the above three equalities and $\sum_{i=1}^4 m_i\dot{q}_{ij}(T)=0$ for $j=1,2$ and $m_1=m_3, m_2=m_4$, it is easy to derive that the relation  \eqref{TT} holds.

For $ i=1,2,3$, let $\xi^{(i)}(t)\in \mathcal{P}(\mathbf{A},\mathbf{B})$ satisfy
$\xi^{(i)}(T)=0$ and
$$\xi^{(i)}(0)= \left( \begin{array}{cc} 0 & -a_3\\
-a_1 & a_2 \\0 & \frac{-m_2a_2-m_4a_2+m_1a_3}{m_3}\\
a_1 & a_2\end{array} \right) R(\theta),$$
  where $a_i=1,$ $ a_j=0$  if $j\not=i.$ Then
   $$ \delta_{\xi^{(1)}}\mathcal{A}(q)=-\left( m_2<\dot{q}_{2}(0), (-1,0)R(\theta)>+m_4<\dot{q}_4(0), (1,0)R(\theta)>\right)=0,$$
   which is
\begin{equation}\label{pa1}
-\dot{q}_{21}\cos(\theta)+\dot{q}_{22}\sin{\theta}+\dot{q}_{41}\cos(\theta)-\dot{q}_{42}\sin(\theta)=0.
\end{equation}
$$ \delta_{\xi^{(2)}}\mathcal{A}(q)= -\left( m_2<\dot{q}_{2}(0), (0,1)R(\theta)>+(m_2+m_4)<\dot{q}_3(0), (0,-1)R(\theta)>\right.$$ $$\left.+ m_4<\dot{q}_4(0), (0,1)R(\theta)>\right)=0;$$
\begin{equation}\label{pa2}
m_2(\dot{q}_{21}\sin(\theta)+\dot{q}_{22}\cos{\theta})-(m_2+m_4)(\dot{q}_{31}\sin(\theta)+\dot{q}_{32}\cos{\theta})
+m_4(\dot{q}_{41}\sin(\theta)+\dot{q}_{42}\cos{\theta})=0.
\end{equation}
$$ \delta_{\xi^{(3)}}\mathcal{A}(q)= -\left( m_1 <\dot{q}_{1}(0), (0,-1)R(\theta)>+m_1 <\dot{q}_{3}(0), (0,1)R(\theta)>\right)=0;$$
\begin{equation}\label{pa3}
-(\dot{q}_{11}\sin(\theta)+\dot{q}_{12}\cos{\theta})+(\dot{q}_{31}\sin(\theta)+\dot{q}_{32}\cos{\theta})
=0.
\end{equation}
Let $$A_{i1}=\dot{q}_{i1}-\dot{q}_{i1}\cos(2\theta)+\dot{q}_{i2}\sin(2\theta),$$
$$A_{i2}=\dot{q}_{i2}+\dot{q}_{i1}\sin(2\theta)+\dot{q}_{i2}\cos(2\theta),$$
$$A_{j1}=\dot{q}_{j1}-\dot{q}_{k1}\cos(2\theta)+\dot{q}_{k2}\sin(2\theta),$$
$$A_{j2}=\dot{q}_{j2}+\dot{q}_{k1}\sin(2\theta)+\dot{q}_{k2}\cos(2\theta),$$
for $i=1,3$ and $(j,k)=(2,4)$ or $(j,k)=(4,2)$. Because $\sum_{i=1}^4 m_i\dot{q}_{ik}=0$ for $k=1,2$, we have
\begin{equation}\label{dqt}
m_1A_{11}+m_2A_{21}+m_3A_{31}+m_4A_{41}=0, \hspace{1cm} m_1A_{12}+m_2A_{22}+m_3A_{32}+m_4A_{42}=0
\end{equation}
by using the fact $m_1=m_3$ and $m_2=m_4$. By using the trigonometric identities $\cos(\theta)=\cos(2\theta)\cos(\theta)+\sin(2\theta)\sin(\theta)$ and $\sin(\theta)=\sin(2\theta)\cos(\theta)-\cos(2\theta)\sin(\theta)$, from equation \eqref{pa1}, we have
$$ -\dot{q}_{21}\cos(\theta)+\dot{q}_{22}\sin{\theta}+\dot{q}_{41}(\cos(2\theta)\cos(\theta)+\sin(2\theta)\sin(\theta))$$
$$-\dot{q}_{42}(\sin(2\theta)\cos(\theta)-\cos(2\theta)\sin(\theta))=0,$$
which is
\begin{equation}\label{pa11}
-A_{21}\cos(\theta)+A_{22}\sin{\theta}=0.
\end{equation}
Similarly from equation \eqref{pa1}, we also have
\begin{equation}\label{pa12}
A_{41}\cos(\theta)-A_{42}\sin{\theta}=0.
\end{equation}
From equation \eqref{pa2} and $\sum_{i=1}^4 m_i\dot{q}_{ik}=0$ , we have
$$-m_1(\dot{q}_{11}\sin(\theta)+\dot{q}_{12}\cos{\theta})-(m_3+m_2+m_4)(\dot{q}_{31}\sin(\theta)
+\dot{q}_{32}\cos{\theta})=0.$$
Thanks to  equation \eqref{pa3} and above equation,  we have
\begin{equation}\label{pa21}
\dot{q}_{31}\sin(\theta)+\dot{q}_{32}\cos{\theta}=0,
\end{equation}
\begin{equation}\label{pa22}
\dot{q}_{11}\sin(\theta)+\dot{q}_{12}\cos{\theta}=0,
\end{equation}
and
\begin{equation}\label{pa23}
(\dot{q}_{21}\sin(\theta)+\dot{q}_{22}\cos{\theta})
+(\dot{q}_{41}\sin(\theta)+\dot{q}_{42}\cos{\theta})=0.
\end{equation}
By adding the product of equation \eqref{pa1} and $\sin(\theta)$ to the product of equation \eqref{pa23} and $\cos(\theta),$ the sum is $A_{22}=0$. Then by equation \eqref{pa11}, $A_{21}=0$. Similarly, we have $A_{41}=A_{42}=0$. From equation \eqref{pa21},
$$\dot{q}_{31}\sin(\theta)+\dot{q}_{32}\cos{\theta} +\dot{q}_{31}(\sin(2\theta)\cos(\theta)-\cos(2\theta)\sin(\theta))
+\dot{q}_{32}(\cos(2\theta)\cos(\theta)+\sin(2\theta)\sin(\theta))=0,
$$
which is
\begin{equation}\label{q24}
A_{31}\sin(\theta)+A_{32}\cos(\theta)=0.
\end{equation}
By definition,
$A_{31}\sin(2\theta)+A_{32}\cos(2\theta)= (\dot{q}_{31}-\dot{q}_{31}\cos(2\theta)$ $+\dot{q}_{32}\sin(2\theta))\sin(2\theta) +(\dot{q}_{32}+\dot{q}_{31}\sin(2\theta)+\dot{q}_{32}\cos(2\theta))\cos(2\theta),$ which implies $A_{31}\sin(2\theta)+A_{32}\cos(2\theta)=A_{32}$. Hence,
\begin{equation}\label{q25}
A_{31}\cos(\theta)-A_{32}\sin(\theta)=0.
\end{equation}
From equations \eqref{q24} and \eqref{q25}, we have $A_{31}=A_{32}=0$.
Then the equation \eqref{dqt} imply that $A_{kj}=0$ for $k=1,2,3,4$ and $j=1,2$. Because the relations \eqref{T0} is equivalent to $A_{kj}=0$, we complete the proof that $q(t)$ connects very well at $t=2T$. This also completes the proof of Theorem \ref{main1}.  \\

\end{proof}

\section{Existence of New Solutions}

In this section, we prove theorem \ref{main2} by showing that there exists a minimizing path which is different from the homographic motion for any given $(\theta,\mu)\in \Omega$ in equation \eqref{thmu}. We show that the action of the test path $\bar{q}(t)$ with constant velocity for a given SPBC $\vec{a}$ is smaller than the minimum action of the path $q^\circ(t)$ for $\vec{a}^\circ$ which is extended to a homographic motion. Therefore, there exists a minimizer with lower action than the action of homographic solution.

First, the homographic motion can be obtained by extending the corresponding minimizing path $q^{\circ}(t)$ on $[0, T]$ of a particular SPBC $\vec{a}^\circ$ in $\Gamma$. Both the initial and ending configurations $q^{\circ}(0)$ and $q^{\circ}(T)$ are rhombus and they satisfy the SPBC. Second, we assume that the test path $\bar{q}(t)$ is formed by  connecting the straight line with constant velocity from the starting configuration $Qstart$ to the ending configuration $Qend$ for a given $\vec{a}$. Both actions $\mathcal{A}(q^{\circ}(t))$ and $\mathcal{A}(\bar{q}(t))$ on $[0,T]$ are explicit continuous functions of $(\theta,\mu)$  given by formula \eqref{Ad} and \eqref{At} respectively.  Although the corresponding actions can be calculated by hand, they are computed by a Matlab program.

\subsection{ Action of the path which is extended to a homographic  solution}\label{sec31}
The configuration $q$ is called a central configuration if $q$ satisfies the following nonlinear
algebraic equation system:
\begin{equation}\label{CC}
\lambda (q_i-c)-\sum_{j=1,j\not= i}^{n} \frac{m_j(q_i-q_j)
}{|q_i-q_j|^3}=0, \hspace{1cm} 1\leq i\leq n,
\end{equation}
for a constant $\lambda$, where $c=(\sum m_i q_i)/M$ is the center
of mass and $M=m_1+m_2+\cdots +m_n$ is the total mass. We recall the fact that coplanar central configurations always admit homographic solutions where each body executes a similar Keplerian ellipse of eccentricity $e$, $0\leq e\leq 1$. When $e=0$, the relative equilibrium solutions are consisting  of uniform circular motion for each of the masses about the common center of mass. When $e=1$, the homographic solutions degenerate to a homothetic solution which includes total collision, together with a symmetric segment of ejection. Gordon found that for fixed period $\mathcal{T}^{\circ}$, all of the homographic solutions have the same action (\cite{GW}).
 Consider the homographic solution of the four-body problem in rhomboid configuration $$q_{k}^\circ (t)=r_k(cos(\omega t+\rho_k),sin(\omega t+\rho_k)), k=1, 2,3, 4,$$
where $r_k>0$ is the radius and $\rho_k=\frac{(k-1)\pi}{2} + \alpha_0, k=1,2,3,4$. $\omega$ and $\alpha_0$ are chosen to make the boundary configurations $q^\circ(0)$ and $q^\circ(T)$ satisfy SPBC.   By the results of the central configurations with some equal masses \cite{AFS,LS,SX}, we can assume that $r_1=r_3$ and $r_2=r_4$ because $m_1=m_3$ and $m_2=m_4$. It is easy to find the following relations by Newtonian equations \eqref{Newton}:
\begin{equation}\label{CC1}
-{\omega}^{2}+\,{\frac {{ 2 m_2}}{ \left( {{ r_1}}^{2}+{{ r_2}}^{2}
 \right) ^{3/2}}}+\,{\frac {{ m_1}}{4{{ r_1}}^{3}}}=0,
\end{equation}
\begin{equation}\label{CC2}
-{\omega}^{2}+\,{\frac {{ 2 m_1}}{ \left( {{ r_1}}^{2}+{{ r_2}}^{2}
 \right) ^{3/2}}}+\,{\frac {{ m_2}}{4{{ r_2}}^{3}}}=0.
\end{equation}
 The minimum period is $\mathcal{T}^{\circ}=\frac{2\pi}{\omega}$. For any given positive $\omega,$ $m_1$ and $m_2$, $r_1$ and $r_2$ are uniquely determined by the above two equations.  The minimum value of the  action functional \eqref{Action} on $[0,T]$ could be computed as
 $$\mathcal{A}(q^{\circ}(t))=\int_0^T\sum_{k=1}^4\frac{1}{2}m_k|\dot{q}_k^{\circ}(t)|^2 +U(q^{\circ}(t))dt $$
 $$=\left({ m_1}\,{\omega}^{2}{{ r_1}}^{2}+{ m_2}\,{\omega}^{2}{{ r_2}}^{2}+4\,{
\frac {{ m_1}\,{ m_2}}{\sqrt {{{ r_1}}^{2}+{{ r_2}}^{2}}}}+
\,{\frac {{{ m_1}}^{2}}{{2 r_1}}}+\,{\frac {{{ m_2}}^{2}}{{
 2r_2}}}\right) T$$
 $$=3\,{\omega}^{2} \left( { m_1}\,{{ r_1}}^{2}+{{ r_2}}^{2}{ m_2}
 \right)T.$$

\begin{figure}
\includegraphics[height=5cm,width=.32\textwidth]{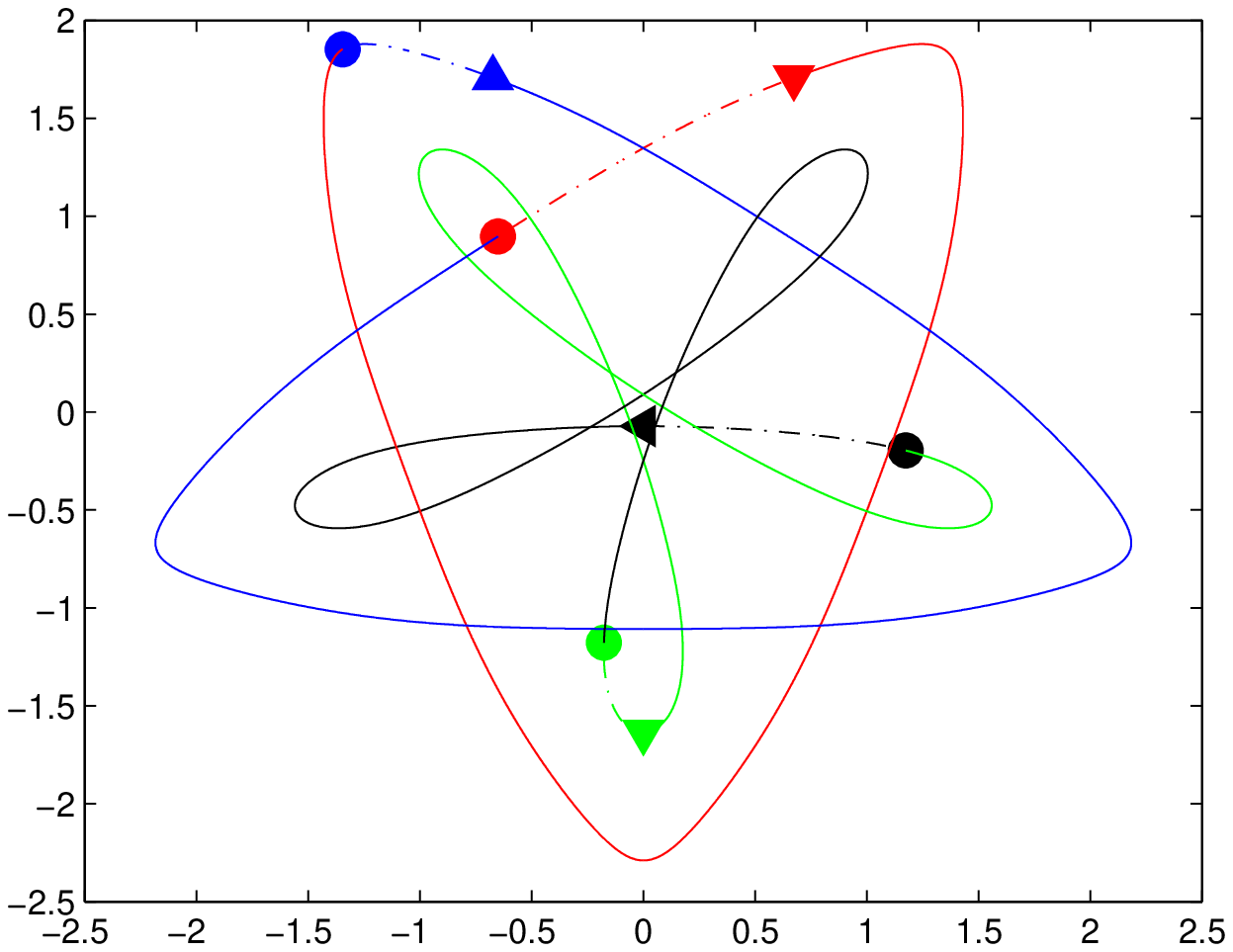}
\includegraphics[height=5cm,width=.32\textwidth]{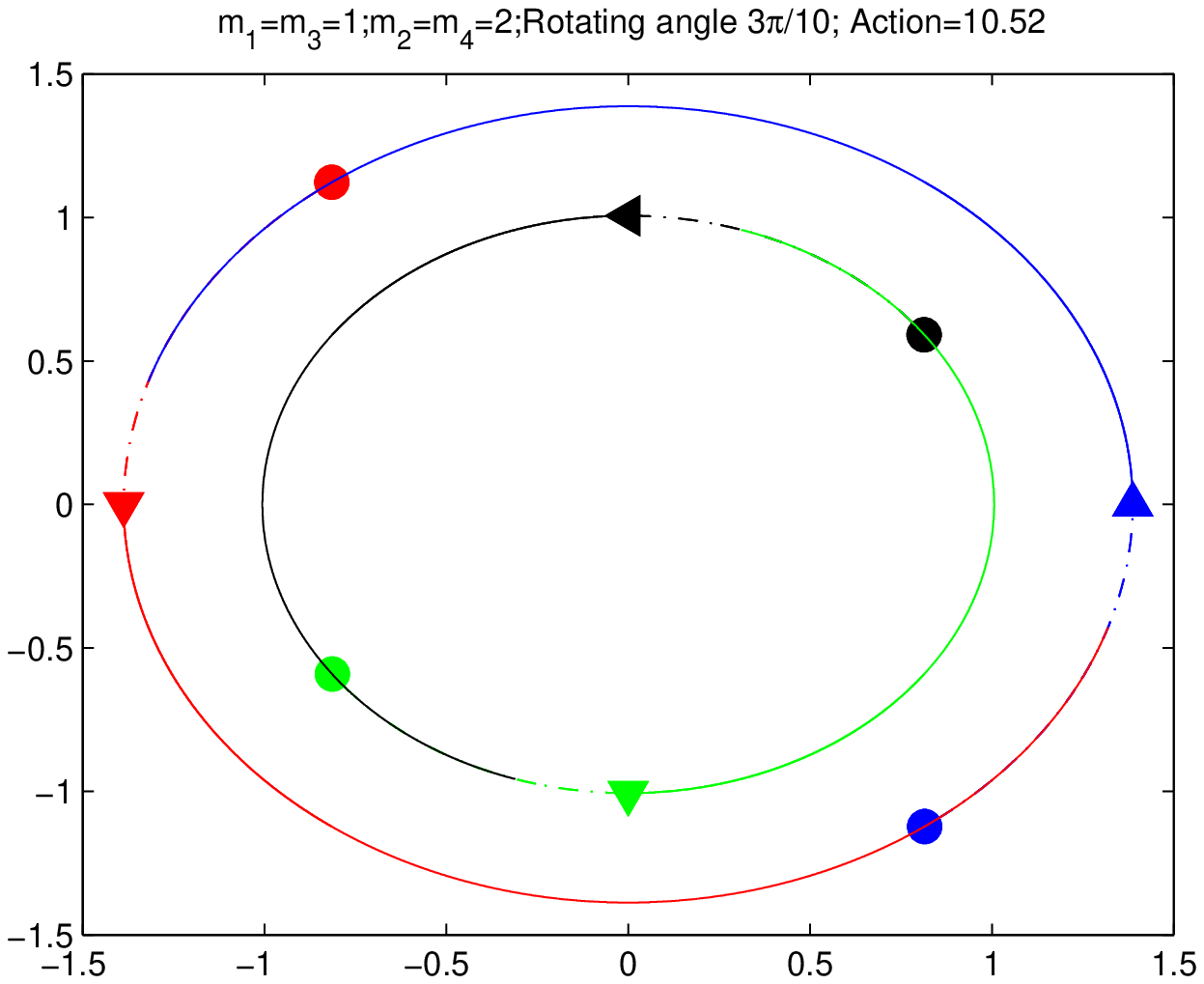}
\includegraphics[height=5cm,width=.32\textwidth]{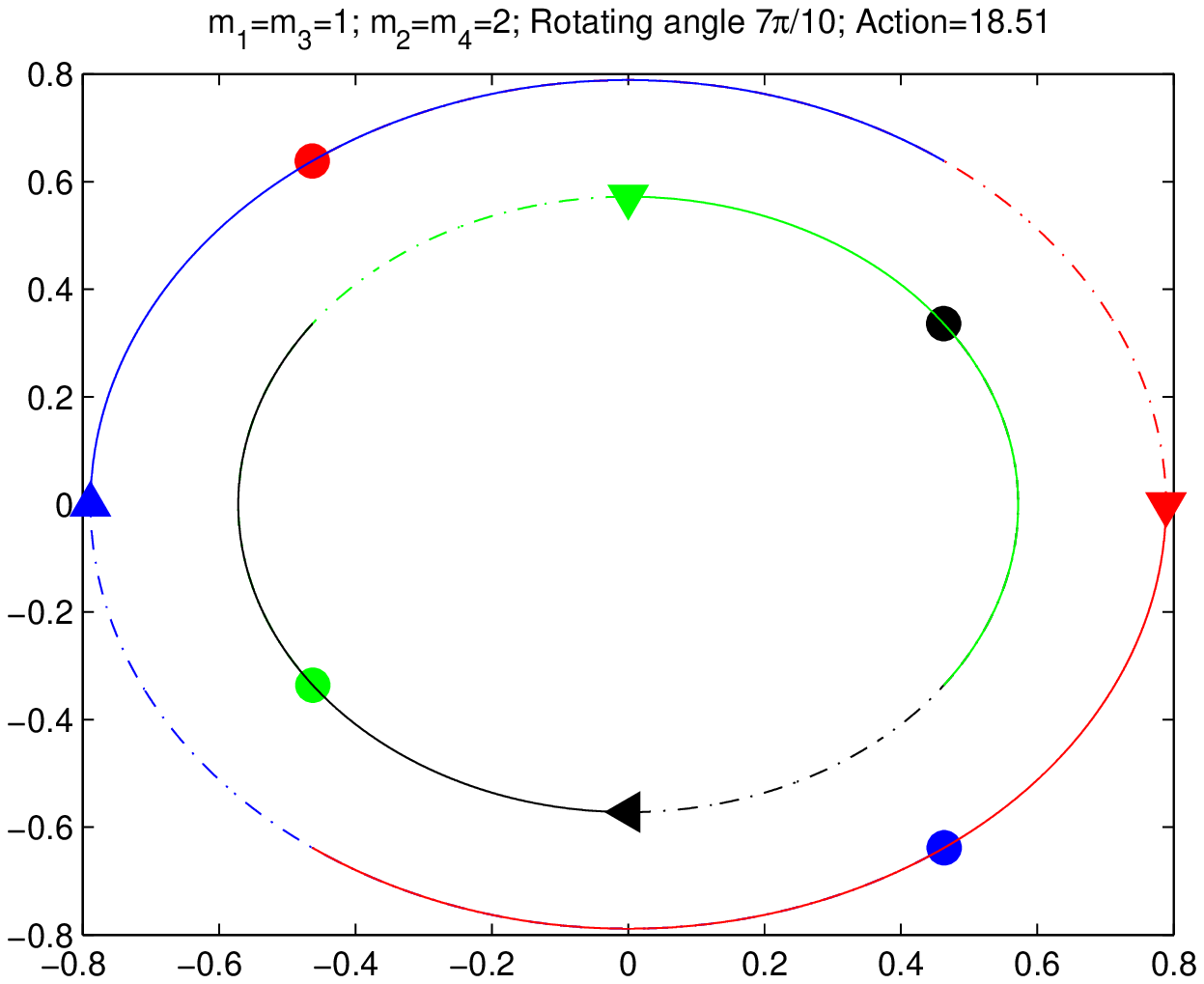}

\caption{\small $(\theta,\mu)=(\frac{4\pi}{5},2)$. Left: Minimizing path with action 9.748; Middle: Homographic solution $\omega=\frac{3\pi}{10}$ with action 10.52; Right: Homographic solution $\omega=\frac{7\pi}{10}$ with action 18.51.   }\label{fighomo}\end{figure}

For fixed $(\theta,\mu)$, there are different central configurations satisfying SPBC to generate homographic solutions. For example, when $\theta=\frac{4\pi}{5}$, $\omega=\frac{\pi}{2}-\frac{\pi}{5}=\frac{3\pi}{10}$ (see middle in Figure \ref{fighomo}) or $\omega=\frac{\pi}{2}+\frac{\pi}{5}=\frac{7\pi}{10}$ (see right in Figure \ref{fighomo}). But in order to have a minimizing action over the time interval $[0,T]$,  the bodies in homographic solutions should rotate an angle  as small as possible in $[0,T]$. By the structure of our prescribed boundary conditions, $\omega=\left|\frac{\pi}{2}-\theta\right|\frac{1}{T}$ if $0<\theta<\pi$, or $\omega=\left|\frac{3\pi}{2}-\theta\right|\frac{1}{T}$ if $\pi\leq\theta<2\pi$. So, for a given $(\theta,\mu)$, the action of the homographic solution in $(0,T)$ is given by equation \eqref{Ad}.

In particular, if $\mu=1$, i.e. $m_1=m_2=1$, then $r_1=r_2$ and $r_1^2=\left(\frac{2\sqrt{2}+1}{4}\omega^{-2}\right)^{2/3}$ by equation \eqref{CC1}.
\begin{equation}\label{Ade}
\mathcal{A}_{\diamondsuit}(\theta,1)=\left\{ \begin{array}{l} 6\left(\frac{2\sqrt{2}+1}{4}\right)^{2/3} \left(\left|\frac{\pi}{2}-\theta\right|\right)^{2/3}T^{1/3} \hbox{ if } 0<\theta<\pi, \\
6\left(\frac{2\sqrt{2}+1}{4}\right)^{2/3} \left(\left|\frac{3\pi}{2}-\theta\right|\right)^{2/3}T^{1/3}\hbox{ if } \pi\leq \theta<2\pi.
\end{array}\right.
\end{equation}
Given any $(\theta,\mu)\in (0,2\pi)\times \mathbf{R}^+$, let $k$ be a positive parameter such that $k\theta\in(0,2\pi)$. Let $\varrho= \frac{\omega(k\theta)}{\omega(\theta)}$. By the equations \eqref{CC1} and \eqref{CC2}, it is easy to derive
\begin{equation}\label{Ad1}
\mathcal{A}_{\diamondsuit}(k\theta,\mu)=\varrho^{\frac{2}{3}}\mathcal{A}_{\diamondsuit}(\theta,\mu).
\end{equation}
%$\omega=\frac{\pi}{2}-\frac{\pi}{5}=\frac{3\pi}{10}$ (see middle in Figure \ref{fig2}) or $\omega=\frac{\pi}{2}+\frac{\pi}{5}=\frac{7\pi}{10}$ (see right in Figure \ref{fig2}) in order to form a rhombus configuration. Let $T=1$, $\omega=\frac{3\pi}{10}$, and $m_1=1$. Given $m_2$, we can find the unique $r_1,r_2$ and the corresponding action of rhombus solution. For example, when $m_2=1$, $r_1=r_2=  1.025193977$ and the corresponding action is $ 5.601516212$ which is bigger than the action 5.2329087226 of test path. So there exists a minimizing path which will generate a star pentagon periodic solution.

\subsection{ Action of a test path}
For any fixed SPBC $\vec{a}_{test}$ and $T=1$,  the test path $\bar{q}(t)$ with constant velocity connecting the structural prescribed boundaries $Qstart$ and $Qend$ is given by equation \eqref{Tpath}. Then the action $\mathcal{A}_{Tpath}$
of the test path is an explicit continuous function of $(\theta,\mu)$ and it is given by equation \eqref{At}. To get lower action $\mathcal{A}_{Tpath}$ of a test path, we need to pick up an appropriate SPBC $\vec{a}_{test}$. It is better to have several SPBC for different values of $(\theta,\mu)$. For example, we use the minimizer $\vec{a}= [ 0.6676542303,$ $ 1.11499232, $ $0.5099504088,$ $ 0.6676542314,$ $ 1.11499232,$ $ 0.5099504078]$ for $\theta=\frac{4}{5}$ and $\mu=1$ as the fixed SPBC $\vec{a}_{test}$ to estimate the action of the test path for all $(\theta,\mu)$. The formula \eqref{At} becomes
$$ \mathcal{A}_{Tpath}(\theta,\mu)=0.81184+
 2.48644\,\cos \left( \theta \right) -
 0.80792\,\sin \left( \theta \right) +2.4864 \,{\mu}^{-1} +0.81184\,\mu +
$$
  $$
2.4864\,\cos \left( \theta \right) \mu- 0.80792\,\sin \left( \theta \right) \mu
+
 2.4864 \,{\mu}^{2}+\int_{0}^1 [3.0861\mu^2+ t (  1.3618\,\sin \left( \theta
 \right) {\mu}^{2}+
 $$
  $$
7.2471\,\cos \left( \theta \right) \mu- 2.9777\,\sin \left( \theta \right) \mu+6.1721\,{\mu}^{2}+
 2.8578\,\cos \left( \theta \right) {\mu}^{2} )+
{t}^{2} (  2.6985\,\mu+$$
 $$
1.3618\,\sin \left( \theta
 \right) {\mu}^{2}+7.2471\,\cos \left( \theta \right) \mu- 2.9777\,\sin \left( \theta \right) \mu+$$ $$ 3.8979\,{\mu}^{2}+
 2.8578\,\cos \left( \theta \right) {\mu}^{2}+ 4.9729 )
]^{-1/2}+\cdots dt
$$
Here we only list the kinetic energy and the one term of potential energy. The integrals only involve the form of $\int_0^1 {(a+bt+ct^2)^{-1/2}} dt $ which can be integrated explicitly by trigonometric substitution.

\subsection{The possible region $\Omega$}
Here we describe how we do the numerical computations for the region $\Omega_{\vec{a}_{test}}$ on which $\mathcal{A}_{\diamondsuit}(\theta,\mu)>\mathcal{A}_{Tpath}(\theta,\mu)$. We are going to compute  $\mathcal{A}_{Tpath}(\theta,\mu)$  for the fixed SPBC $\vec{a}_{test}= [ 0.6676542303,$ $ 1.11499232, $ $0.5099504088,$ $ 0.6676542314,$ $ 1.11499232,$ $ 0.5099504078]$ which is a minimizer of $\tilde{\mathcal{A}}$ for $(\theta,\mu)=(\frac{4\pi}{5},1)$.

For fixed $\mu=1$, the graph of $\mathcal{A}_{\diamondsuit}(\theta,1)$ can be easily obtained from equation \eqref{Ade} and the graph of $\mathcal{A}_{Tpath}(\theta,1)$ is given by figure \ref{ActionMu1}. By direct computation, $\mathcal{A}_{\diamondsuit}(0.78\pi,1)=5.3497>5.3444=\mathcal{A}_{Tpath}(0.78\pi,1)$ but $\mathcal{A}_{\diamondsuit}(0.77\pi,1)=5.2216<5.4085 =\mathcal{A}_{Tpath}(0.77\pi,1)$ ,  and  $\mathcal{A}_{\diamondsuit}(1.11\pi,1)=6.6722>=6.5124\mathcal{A}_{Tpath}(1.11\pi,1)$ but $\mathcal{A}_{\diamondsuit}(1.12\pi,1)=6.5576<6.6465=\mathcal{A}_{Tpath}(1.12\pi,1)$. So there exist $0.77\pi<\theta_0<0.78\pi$ and $1.11\pi<\theta_1<1.12\pi$ such that for any $\theta\in (\theta_0,\theta_1)$, $\mathcal{A}_{\diamondsuit}(\theta,1)>\mathcal{A}_{Tpath}(\theta,1)$.

For fixed $\theta=\frac{4\pi}{5},$ we compute $\mathcal{A}_{\diamondsuit}( \frac{4\pi}{5},\mu)$ and $\mathcal{A}_{Tpath}( \frac{4\pi}{5},\mu)$. We find that $\mathcal{A}_{\diamondsuit}( \frac{4\pi}{5},\mu)>\mathcal{A}_{Tpath}( \frac{4\pi}{5},\mu)$ for $\mu_0<  \mu< \mu_1$ where $0.8<\mu_0<0.9$ and $1.2<\mu_1<1.3$ (see figure \ref{ActionTheta45}). But the action for the minimizing path is less than the action of homographic solution for larger range of $\mu$. In fact, we can prove that $\mathcal{A}_{\diamondsuit}( \frac{4\pi}{5},\mu)>\tilde{\mathcal{A}}$ when $ 0.5\leq \mu\leq 2.1$.
\begin{figure}
\includegraphics[height=6cm,width=.80\textwidth]{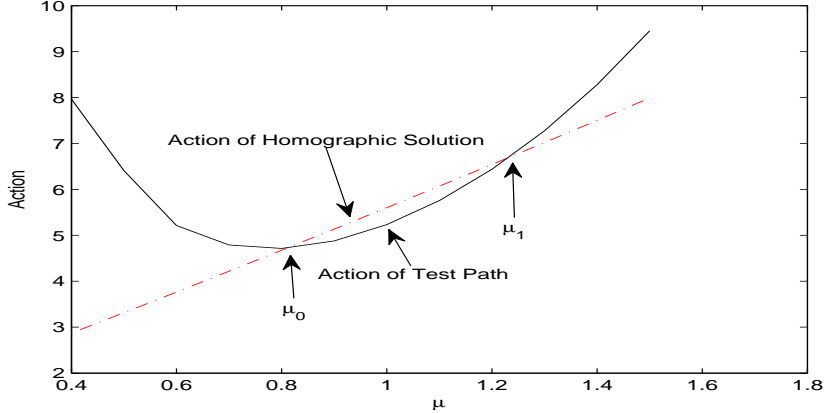}
\caption{Fixed $\theta=\frac{4\pi}{5}$. Use $\vec{a}_{test}= [ 0.6676542303,$ $ 1.11499232, $ $0.5099504088,$ $ 0.6676542314,$ $ 1.11499232,$ $ 0.5099504078]$ as fixed SPBC for the test path. Test path has lower action than homographic solution when $\mu_0<\mu<\mu_1 $.  }\label{ActionTheta45}\end{figure}

We fix $\mu$ at different value to find the intervals of $\theta$ such that $\mathcal{A}_{\diamondsuit}(\theta,\mu)>\mathcal{A}_{Tpath}(\theta,\mu)$. We use $\mathcal{A}_{\diamondsuit}(\frac{4\pi}{5},\mu)$ to generate $\mathcal{A}_{\diamondsuit}(\theta,\mu)$ by equation \eqref{Ad1}.  For the fixed SPBC $\vec{a}_{test}$, the region $\Omega_{\vec{a}_{test}}$ is presented in figure \ref{Omega1}.

  It is easy to know that the action of homographic solution $\mathcal{A}_{\diamondsuit}(\theta,\mu)$ is independent of the choice of SPBC $\vec{a}_{test}$ but the action of  test path $\mathcal{A}_{Tpath}(\theta,\mu)$ strongly depends on the choice of SPBC $\vec{a}_{test}$. The figure \ref{Omega2} shows the different $\Omega_{\vec{a}_{test}}$ when the test pathes are generated from different SPBC $\vec{a}_{test}$ which are the minimizers of $\tilde{\mathcal{A}}$ for same angle $\theta=\frac{4\pi}{5}$ and different mass ratio $\mu$. The SPBC $\vec{a}_{test}$ corresponding to the region $\Omega_{\vec{a}_{test}}$ in figure \ref{Omega2} from top to bottom are  \\ (1) $\vec{a}_{test}=[ 0.8347577868, $ $0.8492284757,$ $ 1.107411045, $ $0.6740939528, $ $1.7071110, $ $0.072136065]$ for  $(\theta,\mu)=(\frac{4\pi}{5}, 2)$;\\ (2) $\vec{a}_{test}=[ 0.6676542303,$ $ 1.11499232, $ $0.5099504088, $ $0.6676542314, $ $1.11499232,$ $ 0.5099504078]$ for  $(\theta,\mu)=$ $(\frac{4\pi}{5}, 1)$;\\ (3)  $\vec{a}_{test}=[ 0.6216336897,$ $ 1.197204657,$ $ 0.347804861, $ $0.6658203645, $ $0.9561601763, $ $ 0.6379731628]$ for  $(\theta,\mu)=(\frac{4\pi}{5}, 0.8)$;\\ (4) $\vec{a}_{test}=[ 0.5350313653, $ $1.354931439,$ $ 0.0572523078,$ $ 0.6625485,$ $ 0.674032487, $ $0.8789531194]$ for  $(\theta,\mu)=(\frac{4\pi}{5}, 0.5)$.\\
   The possible admissible set $\Omega$  contains the union of the regions in figure \ref{Omega2}  but larger range can be expected by refining the test path. %$(0.78\pi,1.1\pi)\times (0.01, 10)$ from our numerical computations

\begin{figure}
\includegraphics[height=6cm,width=.80\textwidth]{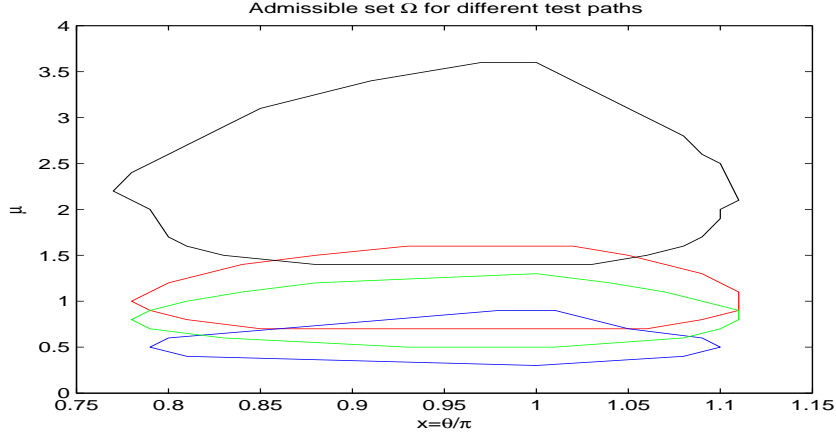}
\caption{Different sets $\Omega_{\vec{a}_{test}}$ for different test path. The SPBC $\vec{a}_{test}$ from top to bottom are minimizers for same rotation angle $\theta=\frac{4\pi}{5}$ and different mass ratio $\mu=2,1,0.8, 0.5$.   }\label{Omega2}\end{figure}

\subsection{Classification of solutions according to  $(\theta,\mu)$}
Now it is ready to prove theorem \ref{main2} and to classify the minimizing solutions based on the rotation angle $\theta$ and mass ratio $\mu$. Since the action of test path is lower than the action of homographic solutions when $(\theta,\mu)\in \Omega$, there exists a minimizer of $\tilde{\mathcal{A}}$ by theorem \ref{Thm:Ex} and theorem \ref{main1}. Its corresponding path $q^*(t)$ in $[0,T]$ can be extended to a classical non-collision non-homographic solution of Newtonian equation \eqref{Newton} for time $t\in [0,\infty)$. By the extension formula \eqref{qet}, $$q(t)=\sigma^{k}(q(t-2kT))R(-2k\theta) \hbox{ for } t\in (2kT,(2k+2)T]
  \hbox{ and } k\in \mathbf{Z}^+,
$$
and $\sigma^2=[1,2,3,4]$, the orbit of the solution is assembled out the sides $q_i(t), t\in[0,4T]$  by rotation only. For any $(\theta,\mu)\in \Omega$, the non-homographic solution $q(t)$ can be classified as follows.\\
(1) By the extension formula, it is easy to show that $q(t)$ is a quasi-periodic solution if $\theta$ is not commensurable with $\pi$ by the fact that the rotation matrix $R(-2k\theta)$ can not be an identity matrix for any integer $k$. Some quasi-periodic solutions are illustrated in figure \ref{qua1}. \\
(2) If $\theta$ is commensurable with $\pi$ and $\theta=\frac{P}{Q}\pi$ where the positive integers $P$ and $Q$ are relatively prime, then $q(t)=\sigma^{2Q}(q(t-4QT))R(-4P\pi)=q(t-4QT)$ which implies that $q(t)$ is a periodic solution.
The trajectory sets $\{q_i(t)| t\in(2k,(2k+2)T)\}$  of the given $i$th-body are all different for $0\leq k<Q$, since the rotation matrix $R(-2k\frac{P}{Q}\pi)$ is not identity matrix. The four trajectories on which the four body travel  in $t\in(0,2QT)$ are all different. So the minimum period is between $2QT$ and $4QT$.

  \begin{itemize}
 \item When $Q$ is even, then $q(t)=\sigma^{Q}(q(t-2QT))R(-2P\pi)=q(t-2QT)$ because $\sigma^2=[1,2,3,4]$. So the four different trajectories in $t\in(0,2QT)$  are closed on their own at $t=2QT$, i.e. $q(2QT)=q(0)$. $q_i(k_1T)\not=q_j(k_2T)$ if $i\not=j$ and $0\leq k_1, k_2\leq Q$, i.e. trajectories of $i$-th body and $j$-th body do not meet at the end of any piece orbit.  The periodic solution  is non-choreographic and the minimum period is $\mathcal{T}=2QT$. Each closed curve has $\frac{2QT}{4T}=\frac{Q}{2}$ sides. In particular, when $Q=4$, each closed curve is ellipse-like (two sides); when $Q=6$, each closed curve is triangle-like (three sides); when $Q=8$, each closed curve is diamond-like (four sides); and so on. See figure \ref{P1}.

 \item When $Q$ is odd, there are four cases. \\
 $\sigma^Q=\sigma$ and $q(t)=\sigma^{Q}(q(t-2QT))R(-2P\pi)=\sigma(q(t-2QT))$. So $q_1(t+2QT)=q_3(t)$, $q_3(t+2QT)=q_1(t)$, and $q_2(t+2QT)=q_4(t)$, $q_4(t+2QT)=q_2(t)$, which means that $q_1$ chases $q_3$ and $q_2$ chases $q_4$. We have different cases on whether the closed orbit  for $q_2$ and $q_4$ is the same as the closed orbit for $q_1$ and $q_3$. \\

 Case 1: If $\mu\not=1$, the ratio between the distance of the center of mass for $m_1$ and $m_3$ to the origin and the distance of the center of mass for $m_2$ and $m_4$ to origin is $\mu$   at  both $t=0$ and $t=T$. By the extension formula \eqref{qet}, bodies $q_1$ and $q_3$ rotate around their center of mass, and  bodies $q_2$ and $q_4$ rotate around their center of mass respectively. They can not have the same orbits. Otherwise the orbit of the center of mass for $m_1$ and $m_3$ would be the same as the orbit of the center of mass for $m_2$ and $m_4$ since $m_1=m_3$ and $m_2=m_4$. This contradicts to the ratio $\mu\not=1$.  Therefore, if $\mu\not=1$, the periodic solution $q(t;\theta,\mu,\vec{a}_0)$ is a double-choreographic solution. The minimum period is $\mathcal{T}=4QT$. Each closed curve has $Q$ sides. Body $q_1$ chases body $q_3$ on a closed curve and body $q_2$ chases body $q_4$ on another closed curve. $q_1(t+2QT)=q_3(t)$ and $q_3(t+2QT)=q_1(t).$ $q_4(t+2QT)=q_2(t)$ and $q_2(t+2QT)=q_4(t).$ See figure \ref{P21}.\\

 Case 2:  If $\mu=1$ and $P$ is odd, by the extension formula \eqref{qet} it is easy to check that the intersection is empty between the sets of $\{q_1(2kT), q_3(2kT)|\hbox{k is a nonnegative integer}\}$   and   $\{q_2((2k+1)T), q_4((2k+1)T)|\hbox{k is a nonnegative integer}\}$. So the orbit of $q_1$ and $q_3$ can not be the same as the orbit of $q_2$ and $q_4$. The periodic solution $q(t;\theta,\mu,\vec{a}_0)$ is a double-choreographic solution with minimum period $\mathcal{T}=4QT$. Body $q_1$ chases body $q_3$ on a closed curve and body $q_2$ chases body $q_4$ on another closed curve. See figure \ref{P22}.\\

 Case 3: If $\mu=1$, $P$ is even and the initial configuration $q(0)$ is geometrically same to the ending configuration $q(T)$, i.e. $a_{i0}=a_{(i+3)0}$ for $i=1,2,3$, then the set $\{ q_1(kT), q_3(kT)\}$ is equal to the set $\{q_2(kT), q_4(kT)\}$ by the extension formula \eqref{qet}. The orbit of $q_1$ and $q_3$ is  the same as the orbit of $q_2$ and $q_4$. Then the periodic solution is a choreographic solution. The minimum period is $\mathcal{T}=4QT$. The closed curve has $Q$ sides.\\
  (A) If $\frac{Q-1}{2}$ is odd, then the four bodies chase each other on the closed curve in the order of $q_1, q_2, q_3, q_4,$ and then $q_1$, i.e. $q_1(t+QT)=q_2(t),$ $q_2(t+QT)=q_3(t),$ $q_3(t+QT)=q_4(t),$ and $q_4(t+QT)=q_1(t).$ See figure \ref{P23}.\\
  (B) If $\frac{Q-1}{2}$ is even, then the four bodies chase each other on the closed curve in the order of $q_1, q_4, q_3, q_2,$ and then $q_1$, i.e. $q_1(t+QT)=q_4(t),$ $q_4(t+QT)=q_3(t),$ $q_3(t+QT)=q_2(t),$ and $q_2(t+QT)=q_1(t).$ See figure \ref{P24}. \\

  {\bf Case 4: } If $\mu=1$, $P$ is even and the initial configuration $q(0)$ is not geometrically same to the ending configuration $q(T)$, i.e. $(a_{10}, a_{20}, a_{30})\not= (a_{40}, a_{50}, a_{60})$,  then $q_1(k_1T)$ and $q_3(k_1T)$ can not  match with $q_2(k_2T)$ and $q_4(k_2T)$ for any nonnegative integer $k_1$ and $k_2$. So the periodic solution is a double choreographic solution. Each closed curve has $Q$ sides. The minimum period is $\mathcal{T}=4QT$. Body $q_1$ chases body $q_3$ on a closed curve and body $q_2$ chases body $q_4$ on another closed curve. $q_1(t+2QT)=q_3(t)$ and $q_3(t+2QT)=q_1(t).$ $q_4(t+2QT)=q_2(t)$ and $q_2(t+2QT)=q_4(t).$ See figure \ref{case4}.\\

\end{itemize}

\begin{figure}
\includegraphics[height=5cm,width=.32\textwidth]{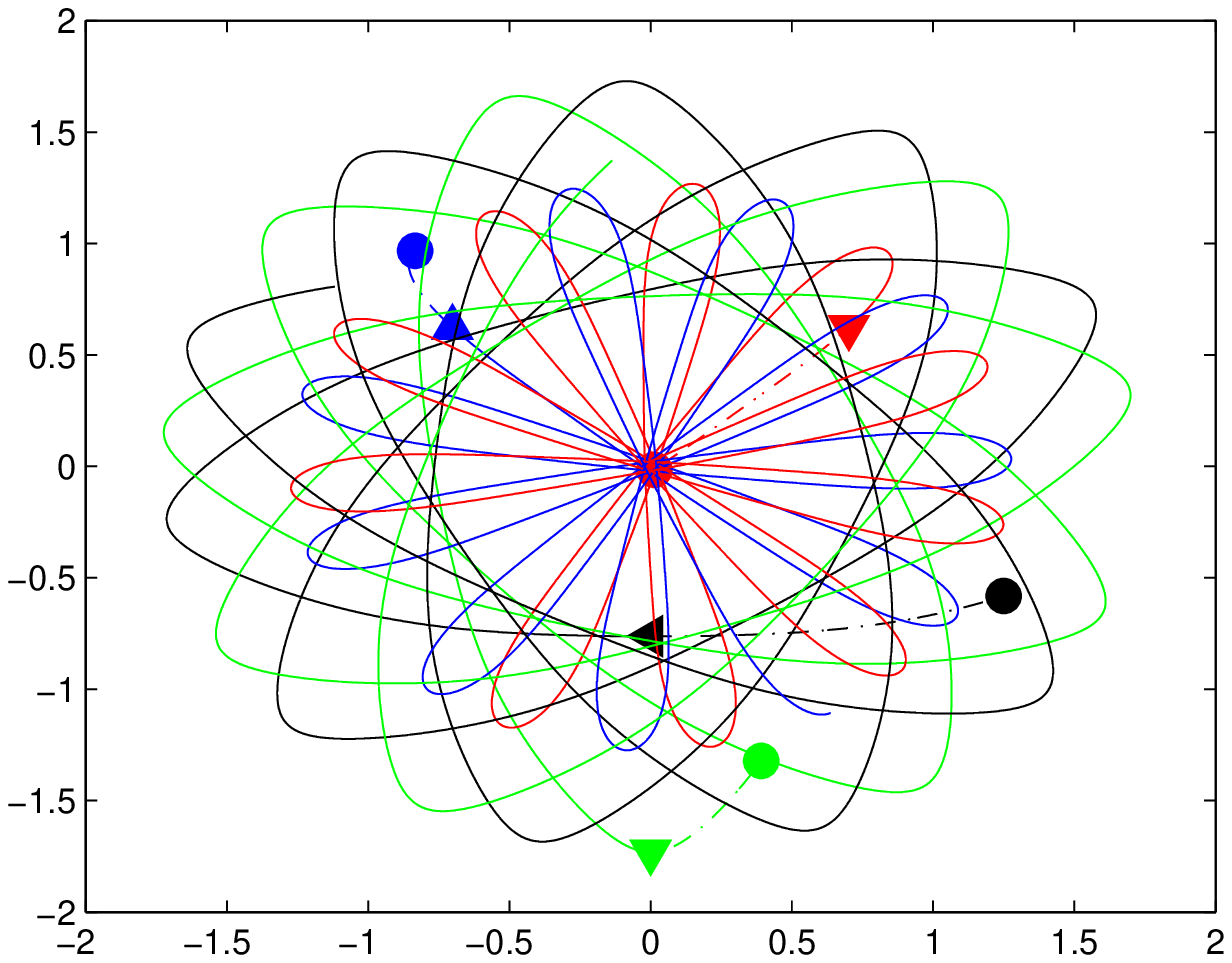}
\includegraphics[height=5cm,width=.32\textwidth]{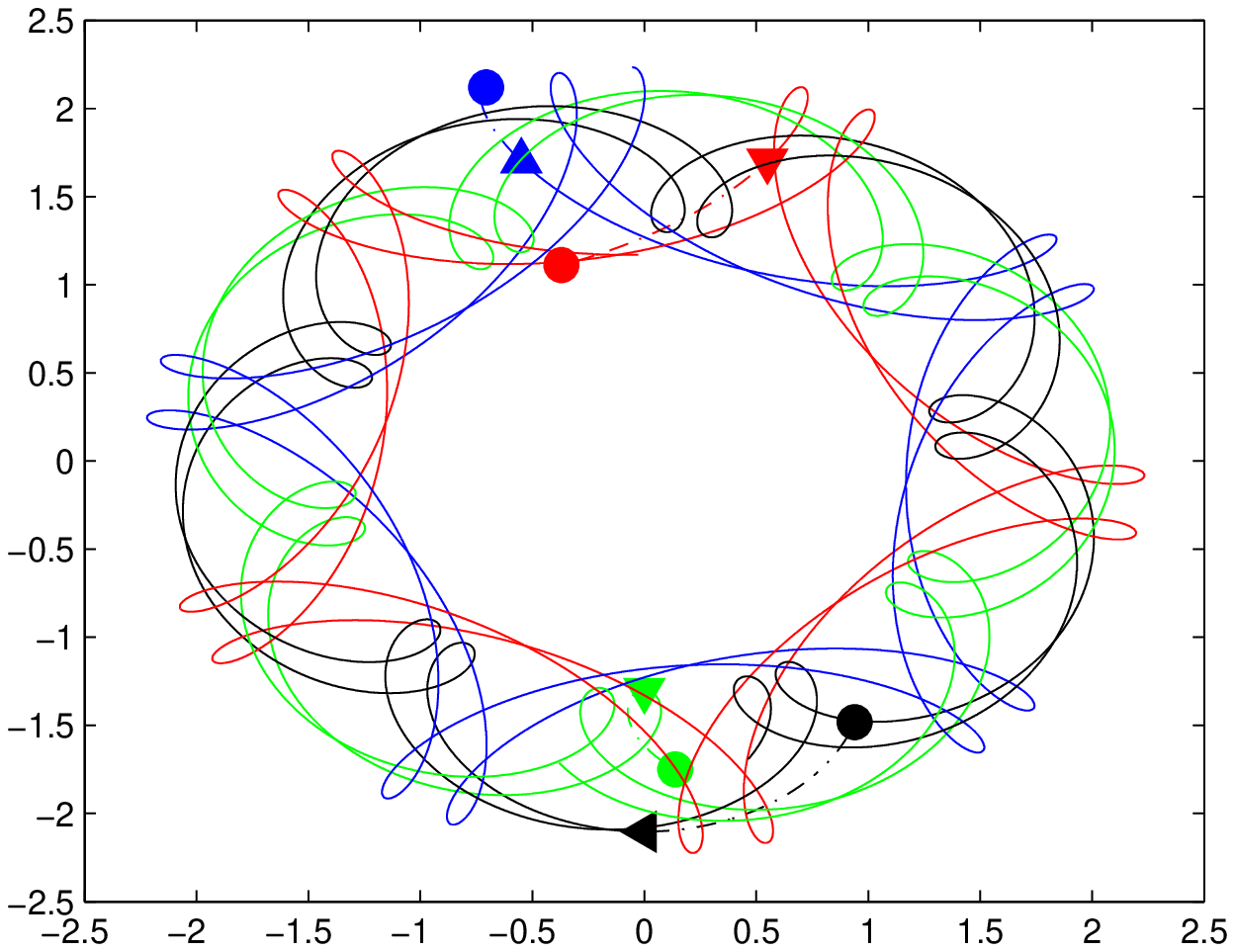}
\includegraphics[height=5cm,width=.32\textwidth]{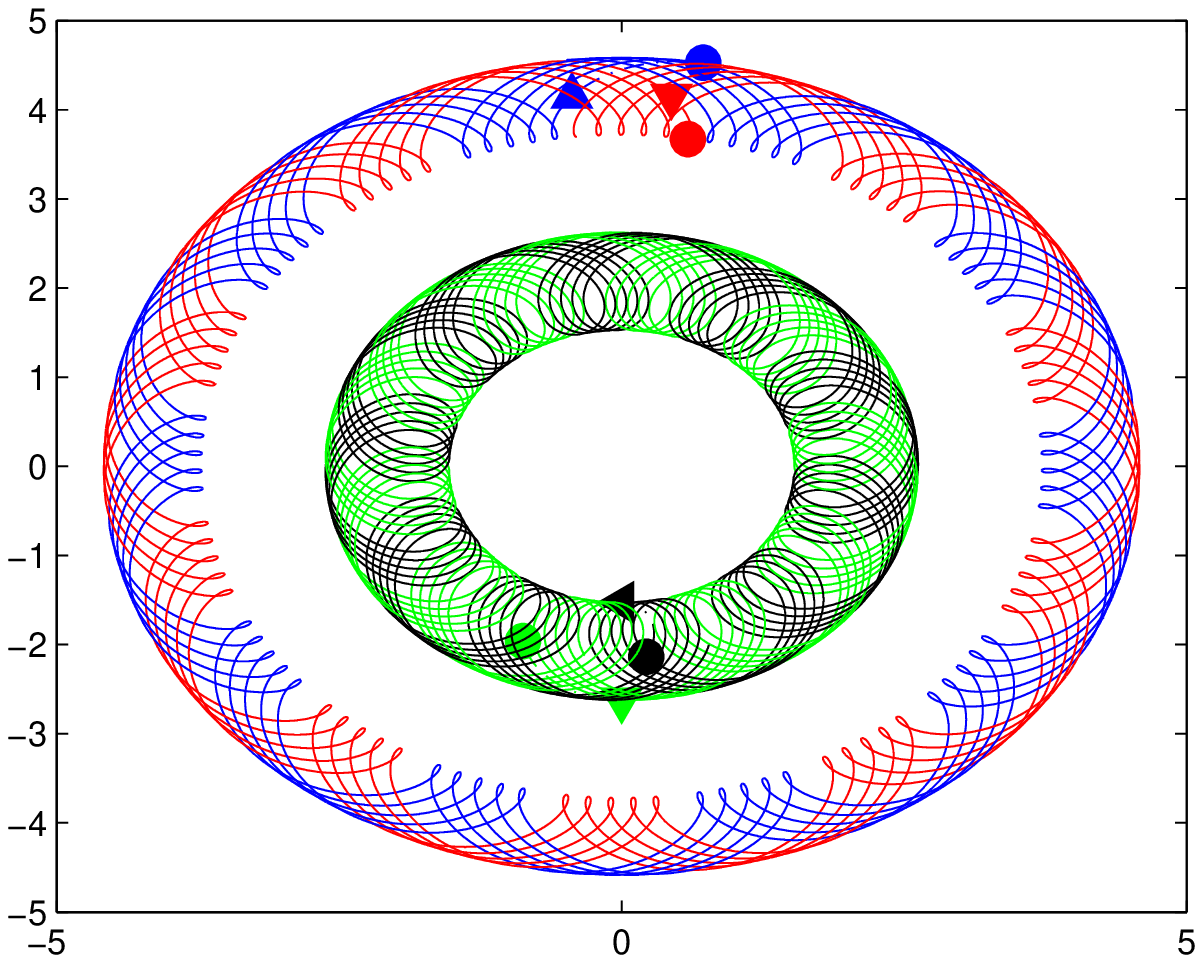}
\caption{\small Quasi-Periodic Solutions. From left to right: $(\theta,\mu)=(2.43,0.5)$ on $[0,40T]$; $(2.82, 1)$ on $[0,40T]$; $(3.3, 2)$ on $[0,220T]$ }
\label{qua1}\end{figure}

\begin{figure}
\includegraphics[height=5cm,width=.32\textwidth]{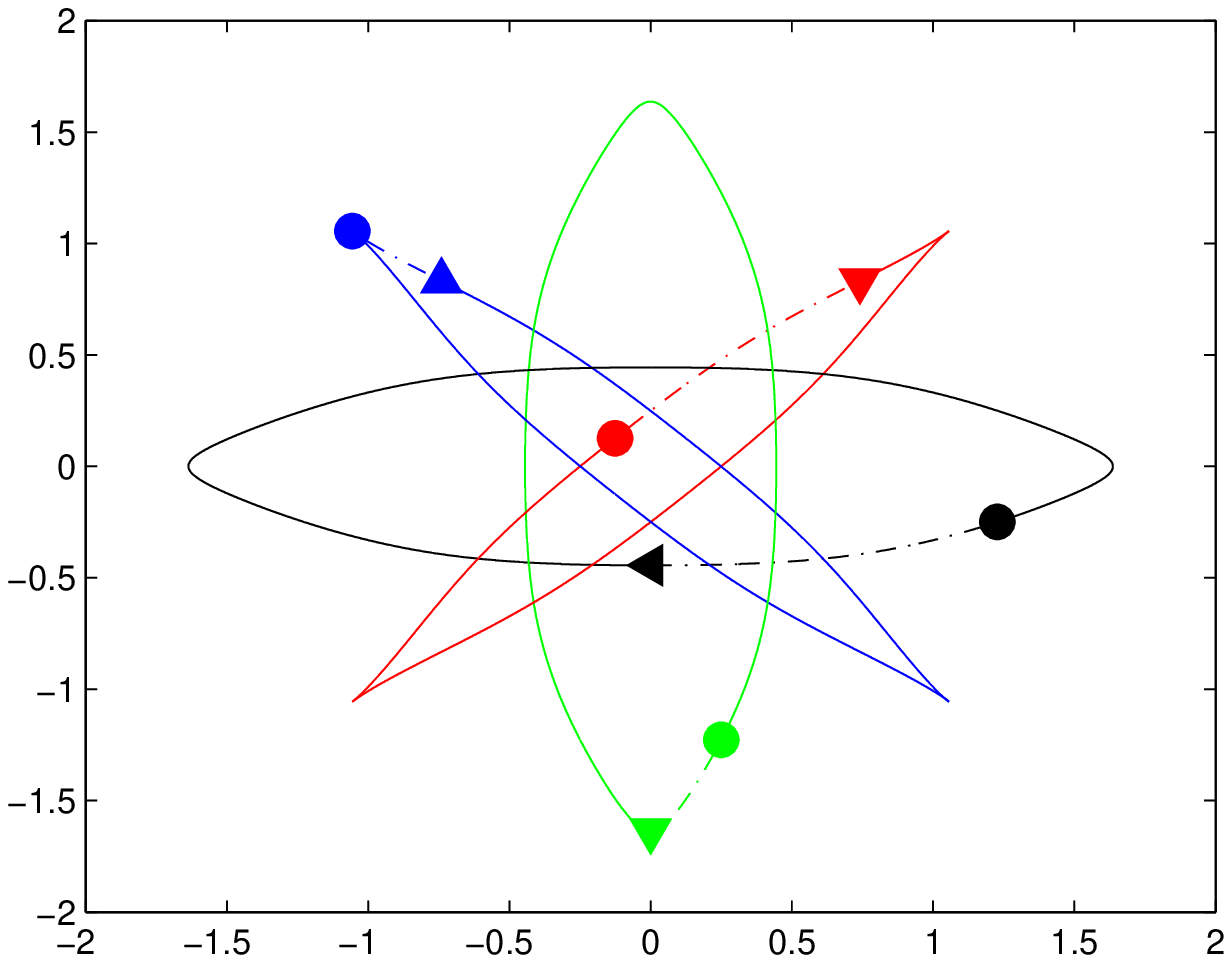}
\includegraphics[height=5cm,width=.32\textwidth]{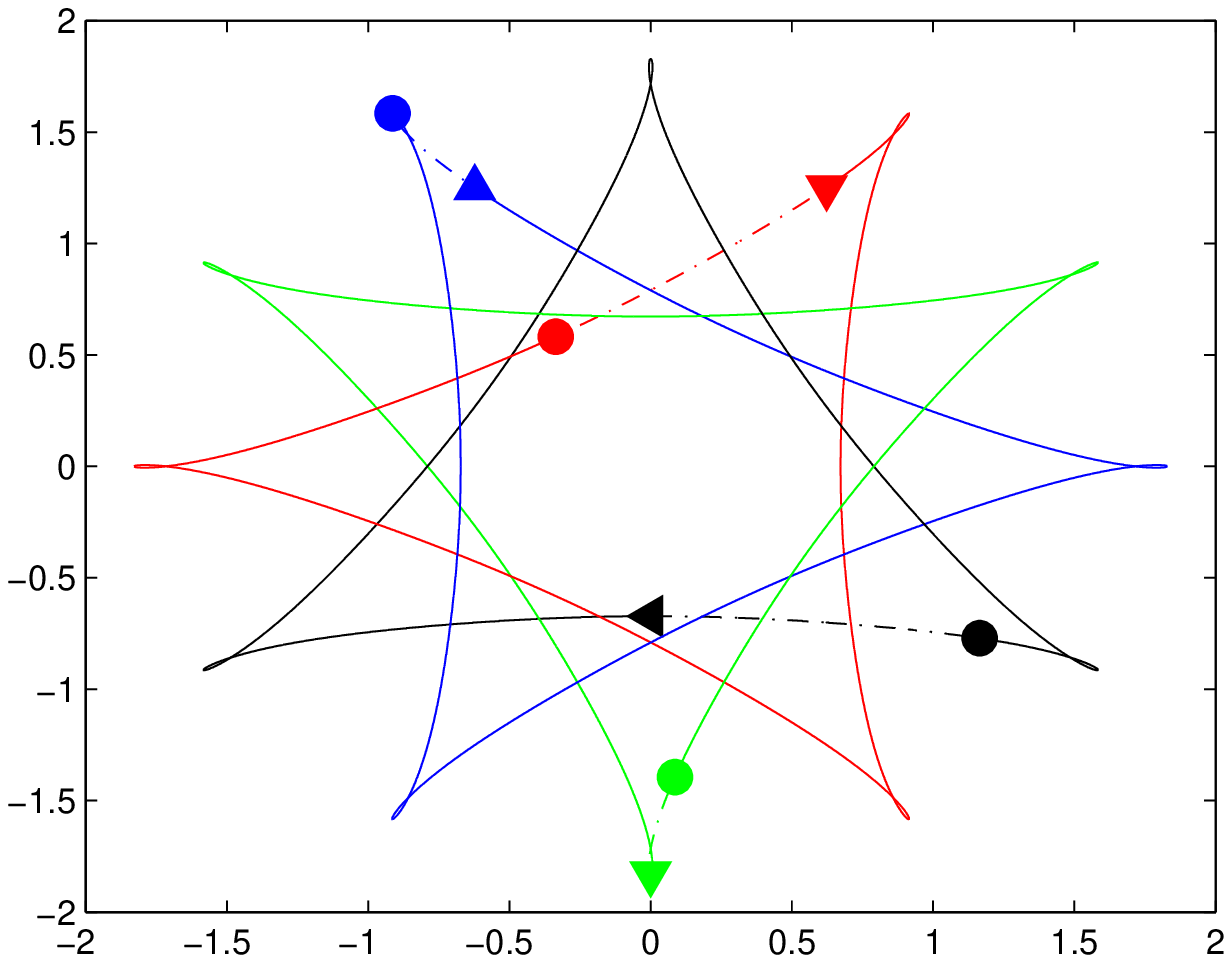}
\includegraphics[height=5cm,width=.32\textwidth]{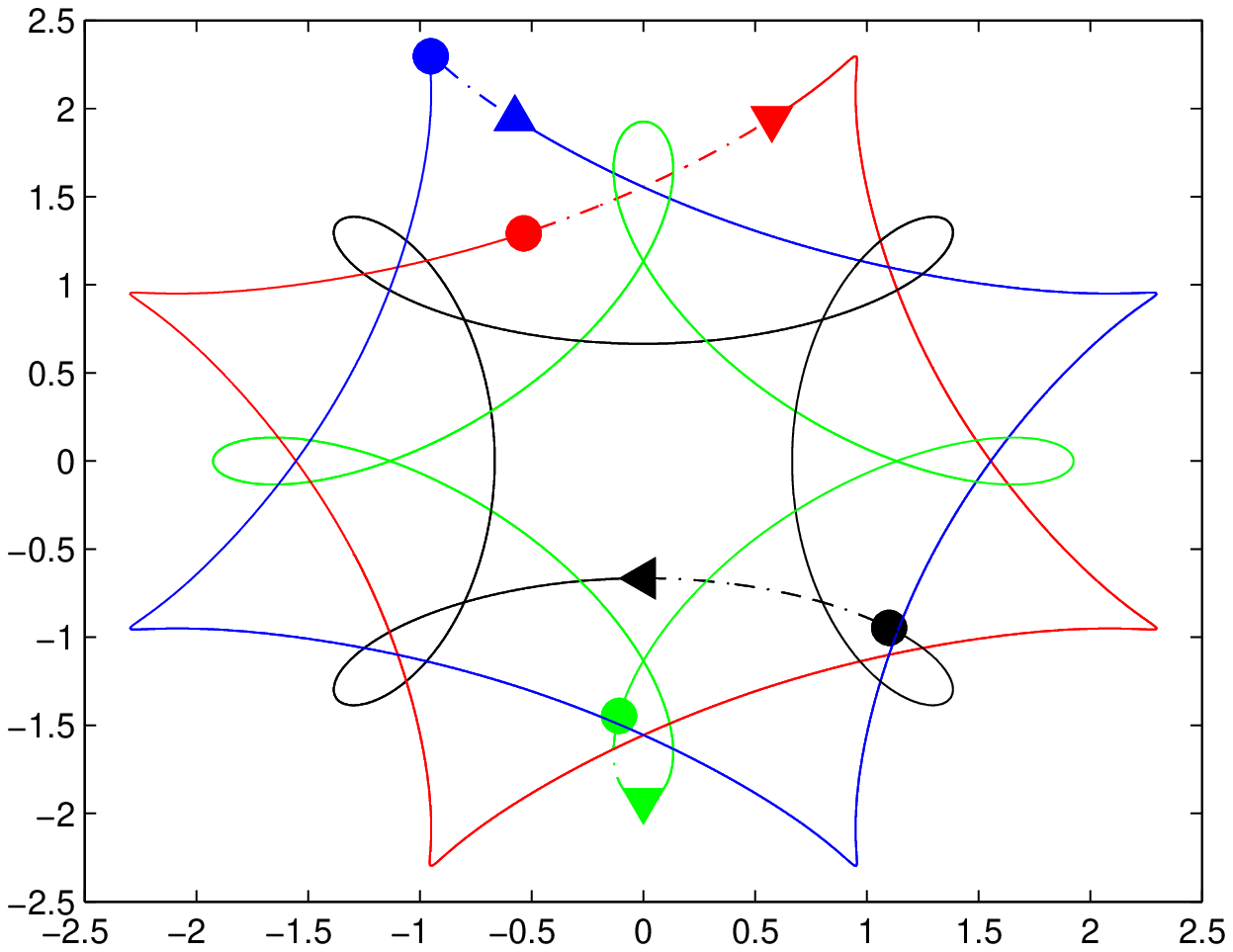}
\caption{\small Non-choreographic Periodic Solutions when  $\theta = \frac{P}{Q}\pi$ with $Q$ even. From left to right: $(\theta,\mu)=(\frac{3\pi}{4},0.8)$;$(\theta,\mu)=(\frac{5\pi}{6},1)$; $(\theta,\mu)=(\frac{7\pi}{8},1.5)$. }
\label{P1}\end{figure}

\begin{figure}
\includegraphics[height=5cm,width=.32\textwidth]{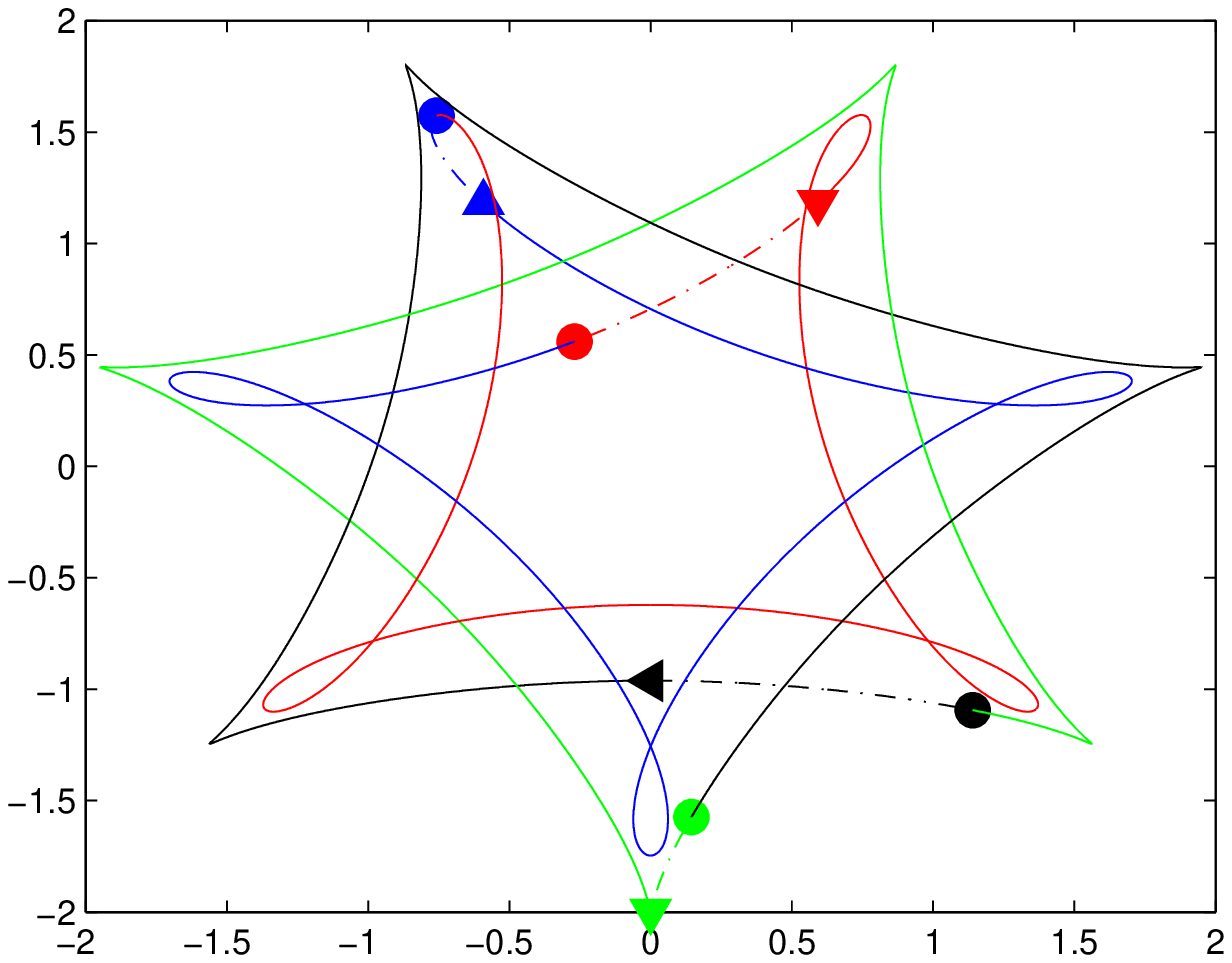}
\includegraphics[height=5cm,width=.32\textwidth]{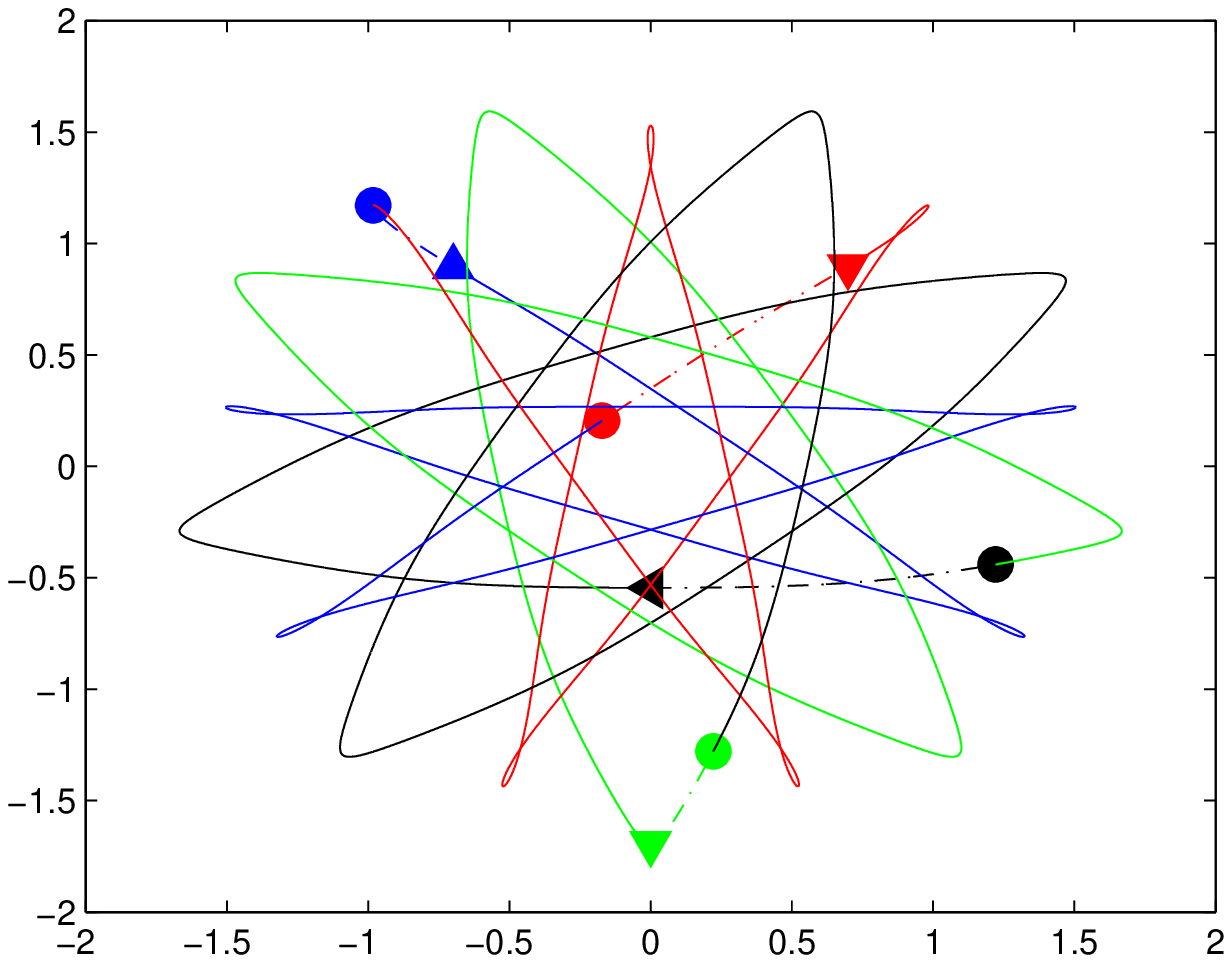}%{Vm09A14f17.eps}
\includegraphics[height=5cm,width=.32\textwidth]{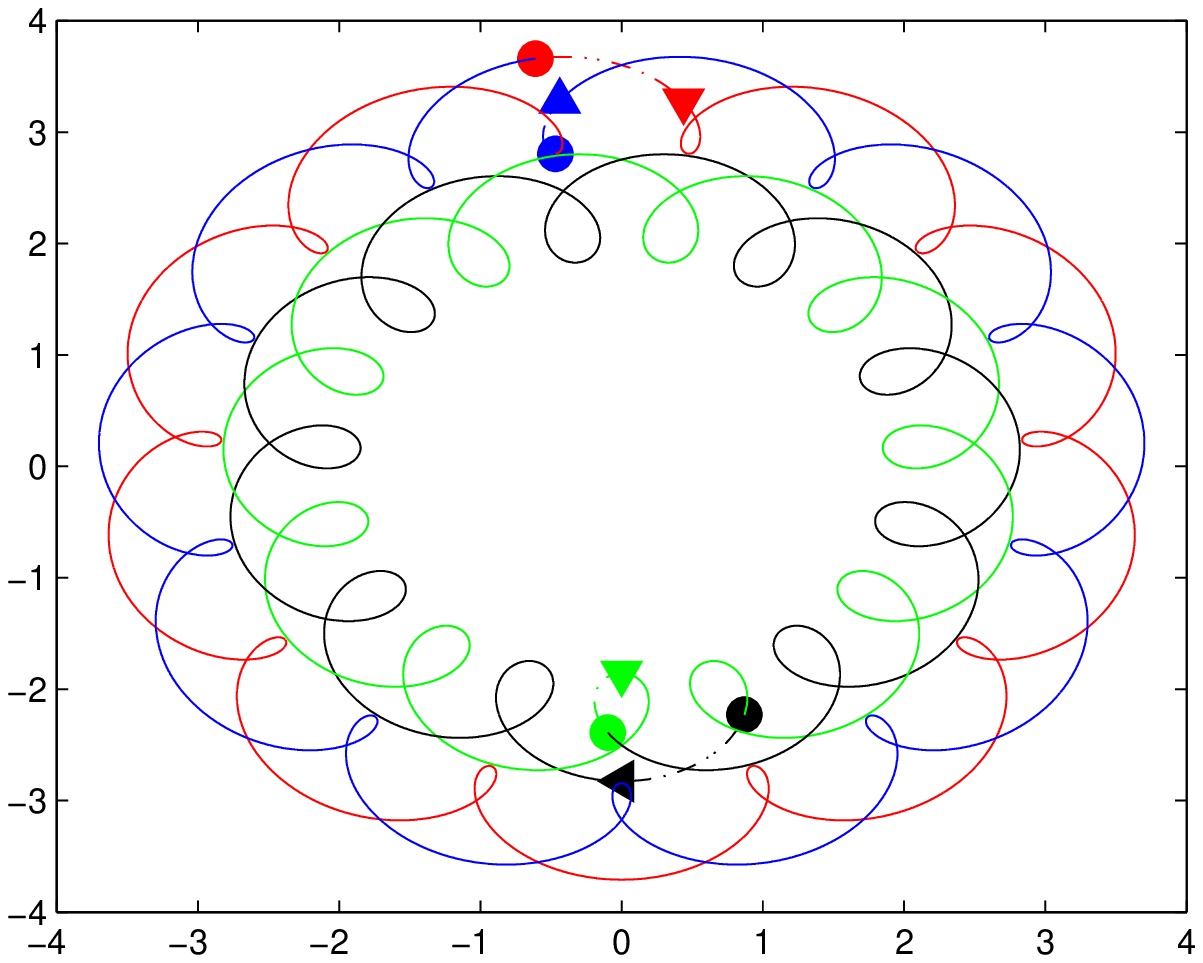}
\caption{\small Double Choreographic Solutions when $\mu\not=1$ and  $\theta = \frac{P}{Q}\pi$ with $Q$ odd. From left to right: $(\theta,\mu)=(\frac{6\pi}{7},0.8)$;$(\theta,\mu)=(\frac{7\pi}{9},0.8)$; $(\theta,\mu)=(\frac{18\pi}{19},1.4)$.}
\label{P21}\end{figure}

\begin{figure}
\includegraphics[height=5cm,width=.32\textwidth]{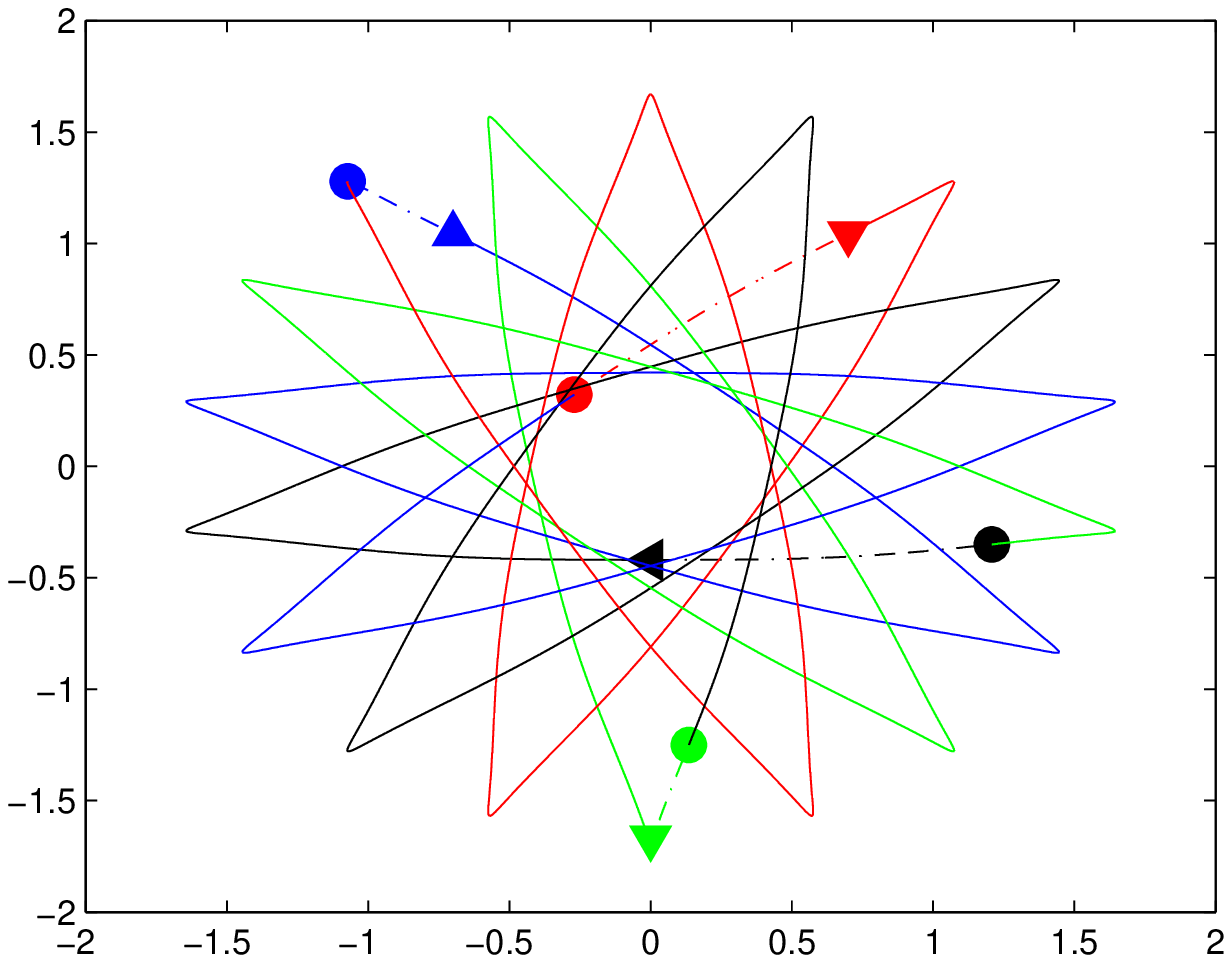}
\includegraphics[height=5cm,width=.32\textwidth]{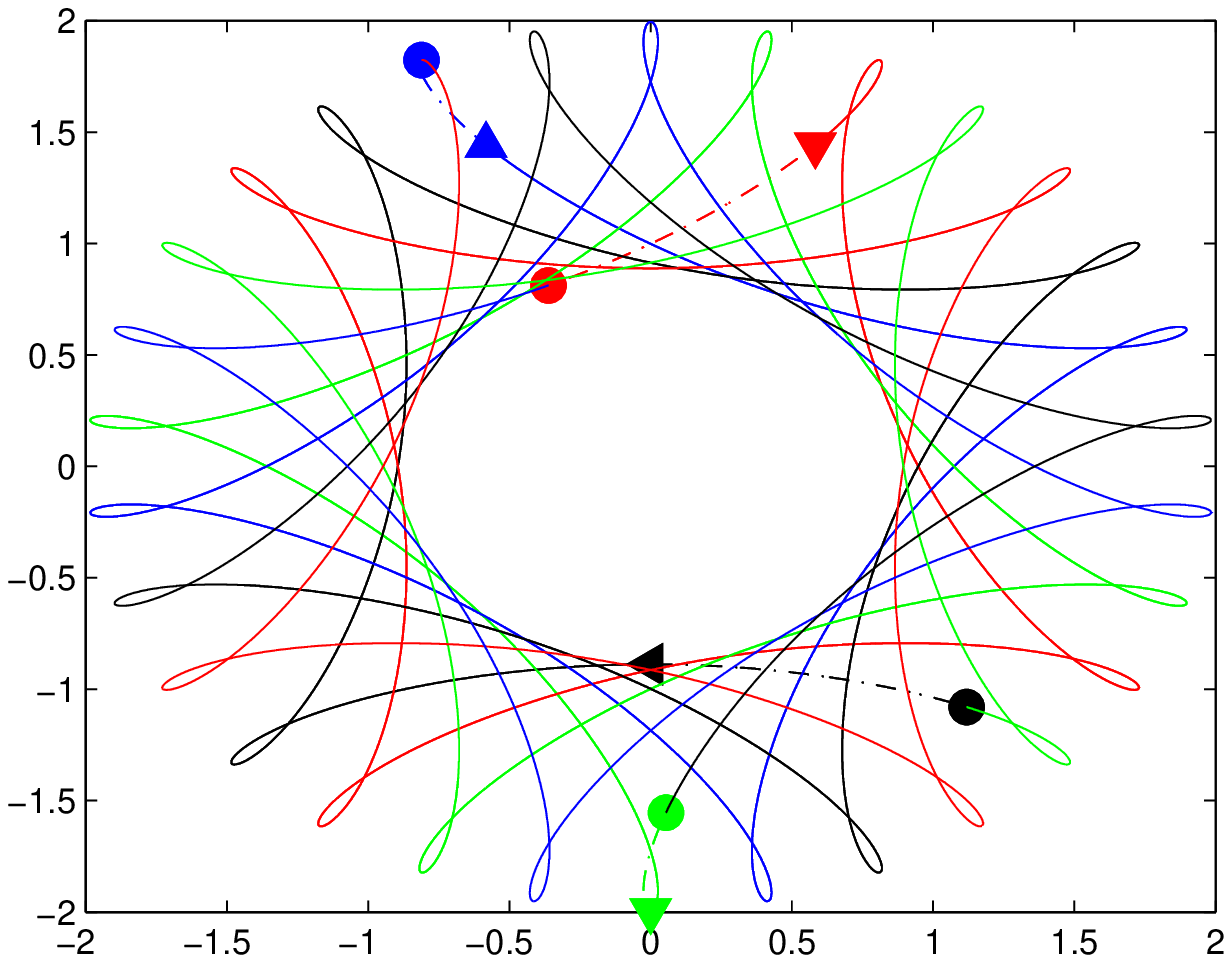}
\includegraphics[height=5cm,width=.32\textwidth]{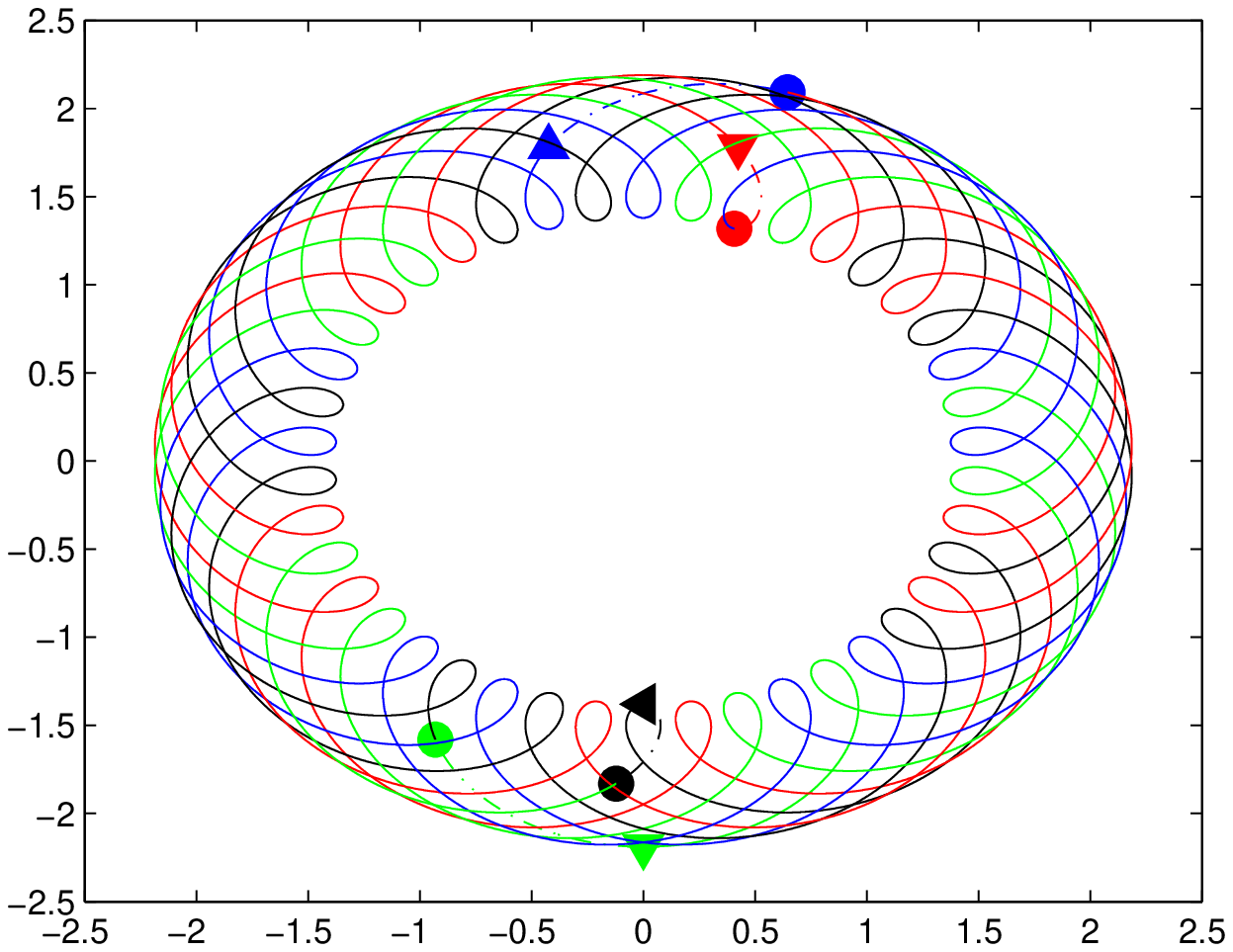}
\caption{\small  Double Periodic Solutions when $\mu=1$ and  $\theta = \frac{P}{Q}\pi$ with both $P$ and $Q$ odd. From left to right: $(\theta,\mu)=(\frac{7\pi}{9},1)$;$(\theta,\mu)=(\frac{13\pi}{15},1)$; $(\theta,\mu)=(\frac{23\pi}{21},1)$. }
\label{P22}\end{figure}

\begin{figure}
\includegraphics[height=5cm,width=.32\textwidth]{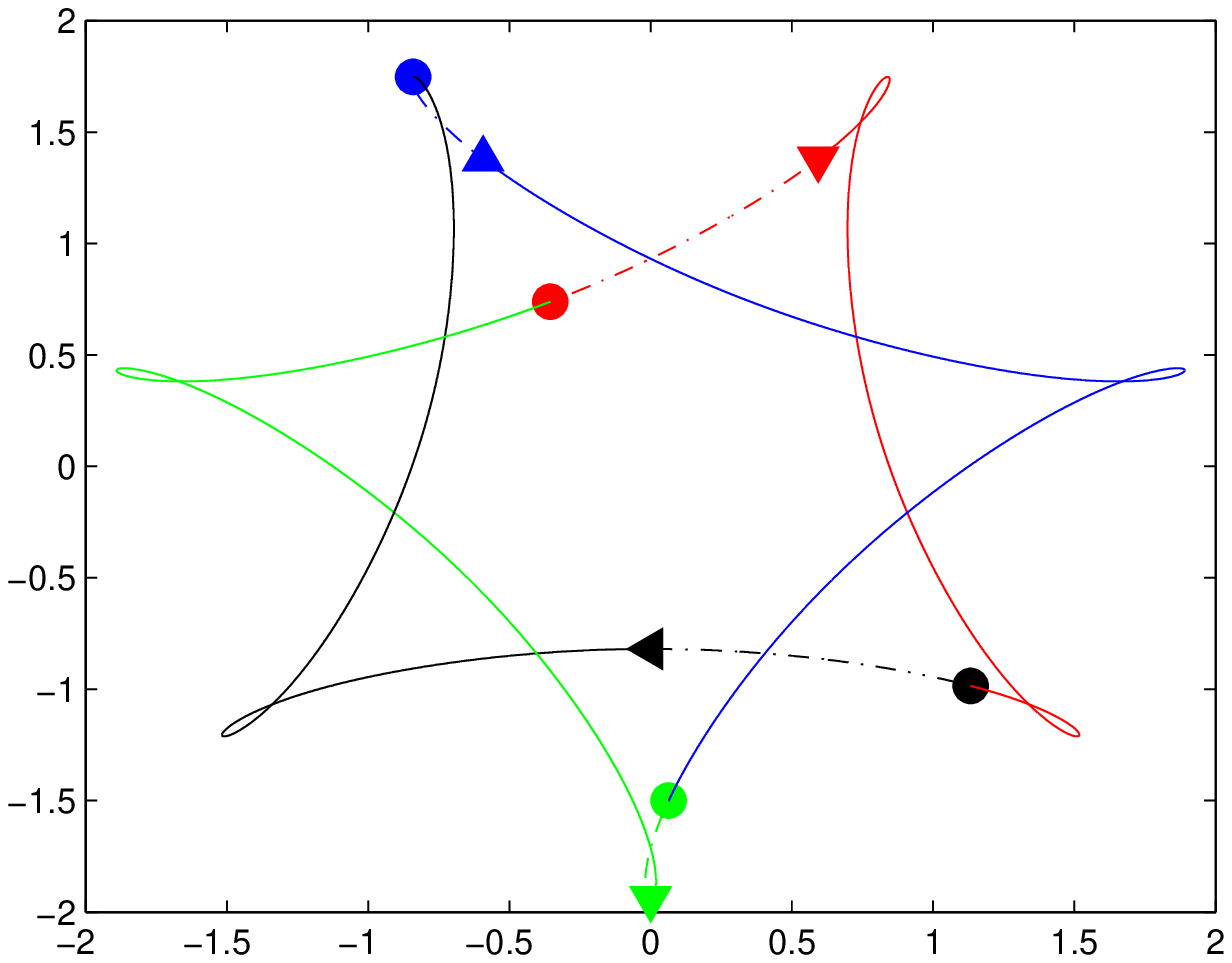}
\includegraphics[height=5cm,width=.32\textwidth]{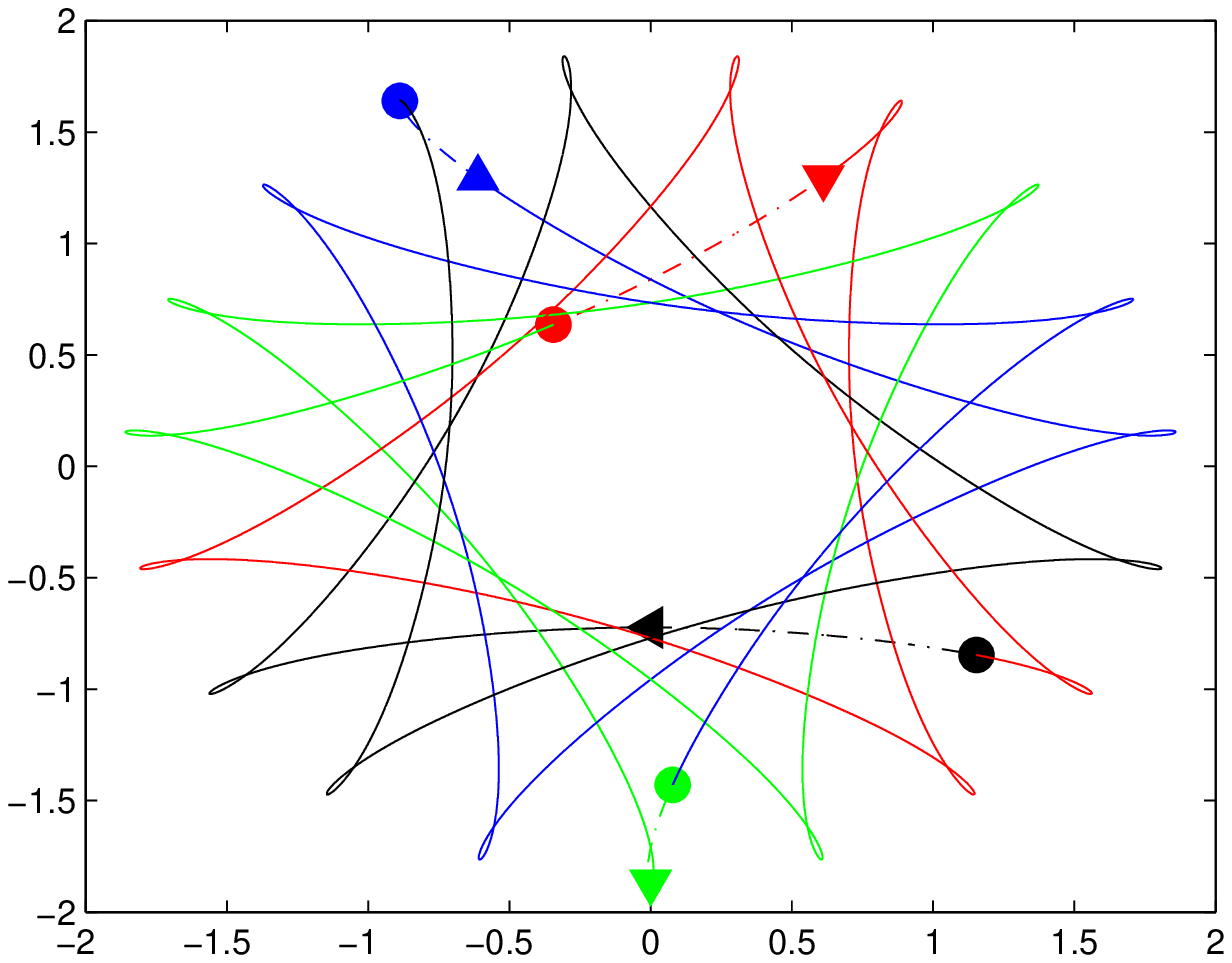}
\includegraphics[height=5cm,width=.32\textwidth]{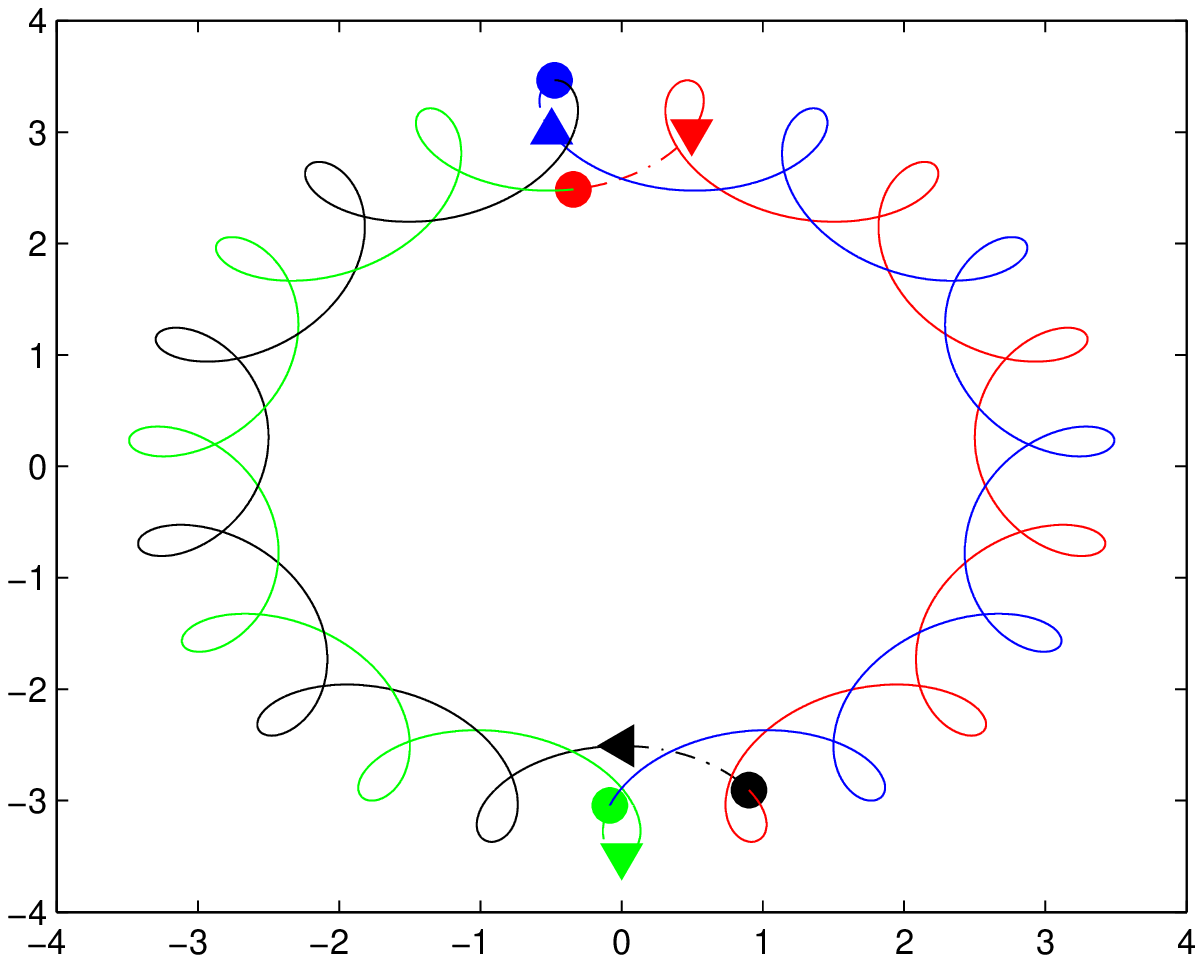}
\caption{\small \small Simple Choreographic Solutions when $\mu=1$ and $\theta = \frac{P}{Q}\pi$ with $P$ even and $\frac{Q-1}{2}$ odd. Bodies chase each other in the order $q_1 (red) $ $\rightarrow q_2 (black)$ $ \rightarrow q_3 (blue)$ $ \rightarrow q_4 (green)$ $ \rightarrow q_1 $. From left to right $\theta= \frac{6\pi}{7},$ $\frac{16\pi}{19},$ and  $\frac{22\pi}{23}$.   }
\label{P23}\end{figure}

\begin{figure}
\includegraphics[height=5cm,width=.32\textwidth]{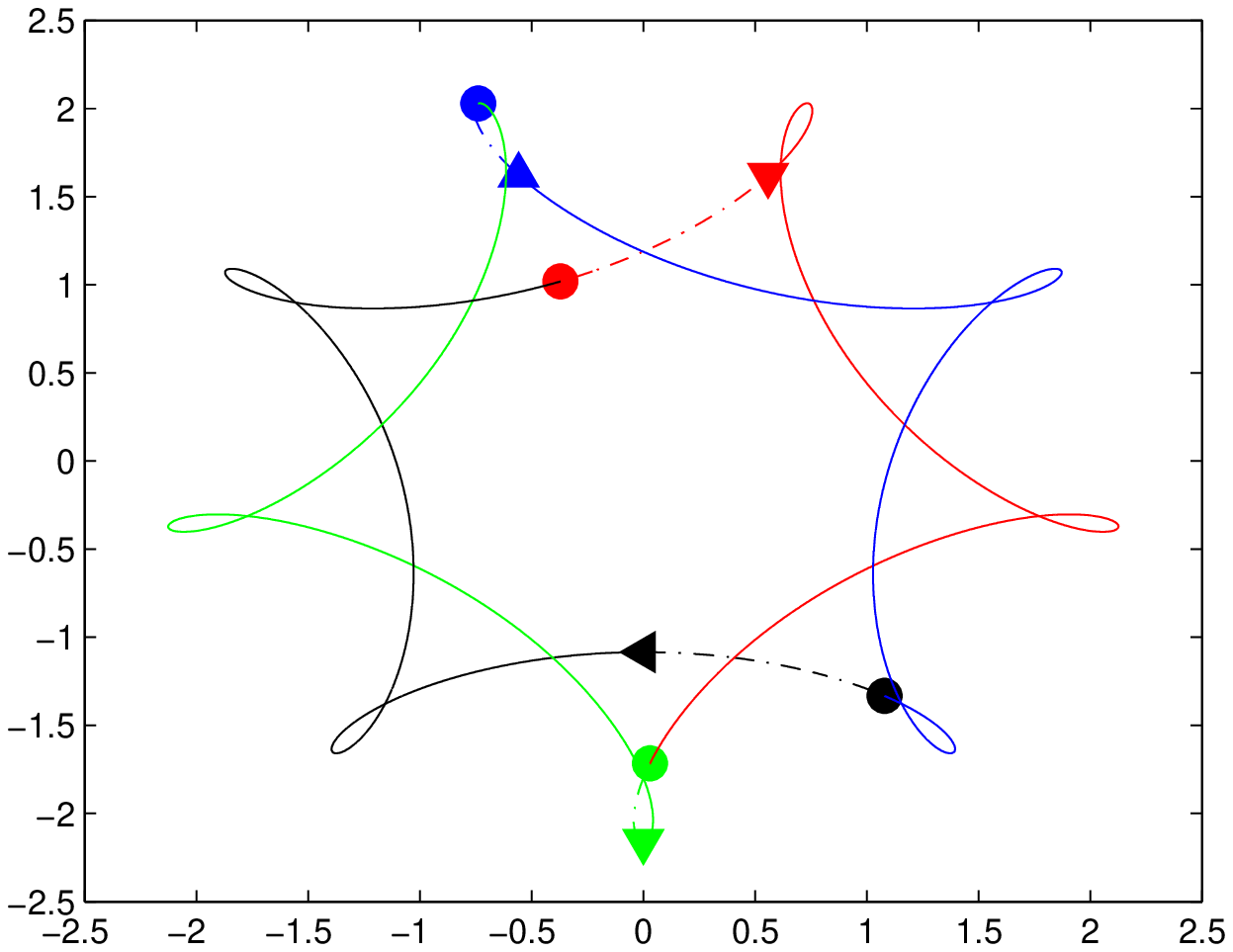}
\includegraphics[height=5cm,width=.32\textwidth]{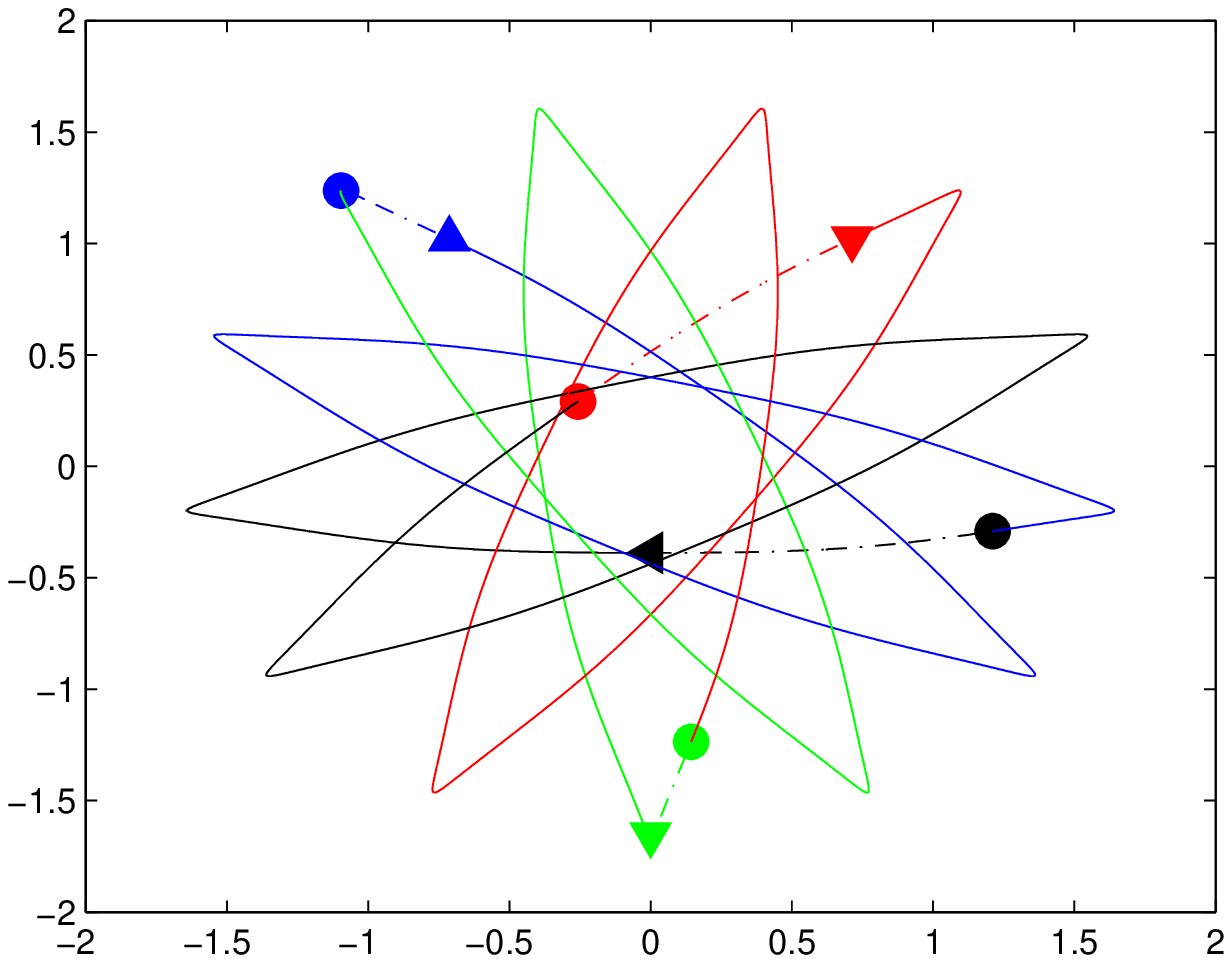}
\includegraphics[height=5cm,width=.32\textwidth]{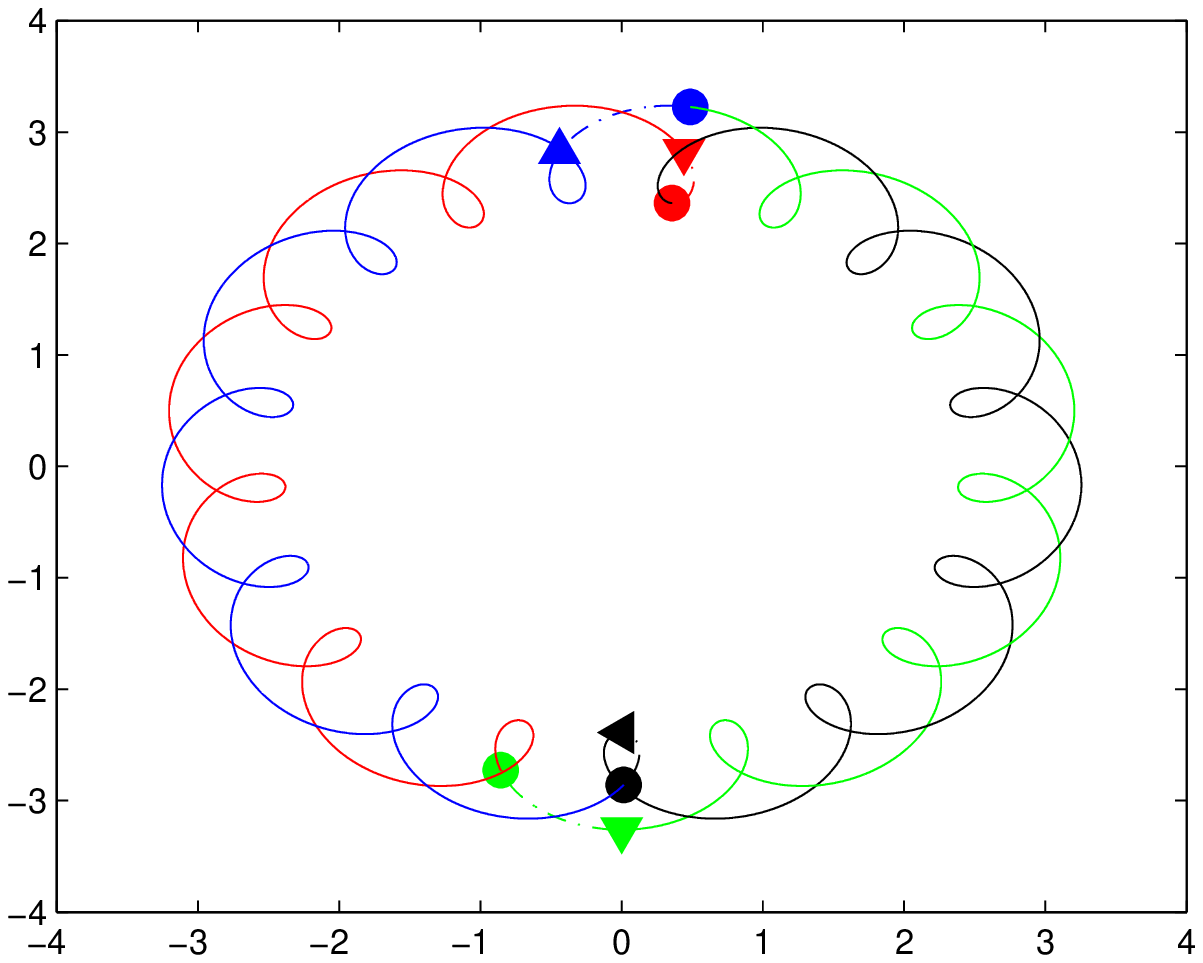}
\caption{\small Simple Choreographic Solutions when $\mu=1$ and  $\theta = \frac{P}{Q}\pi$ with $P$ even and $\frac{Q-1}{2}$ even.  Bodies chase each other in the order $q_1 (red) \rightarrow q_4 (green) \rightarrow q_3 (blue) \rightarrow q_2 (black)\rightarrow q_1$. From left to right $\theta= \frac{8\pi}{9},$ $\frac{10\pi}{13},$ and  $\frac{22\pi}{21}$. }
\label{P24}\end{figure}

\begin{figure}
\includegraphics[height=5cm,width=.32\textwidth]{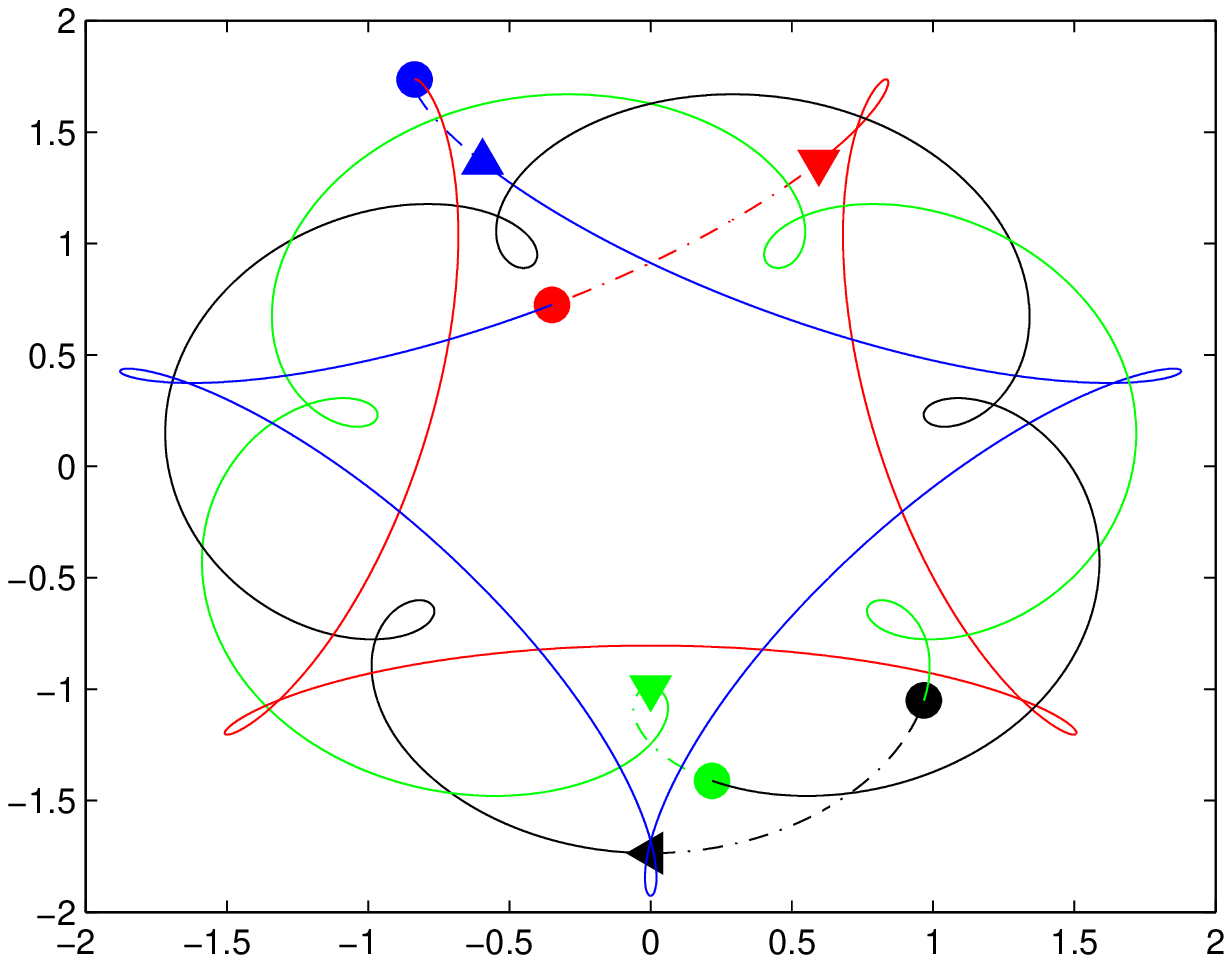}
\includegraphics[height=5cm,width=.32\textwidth]{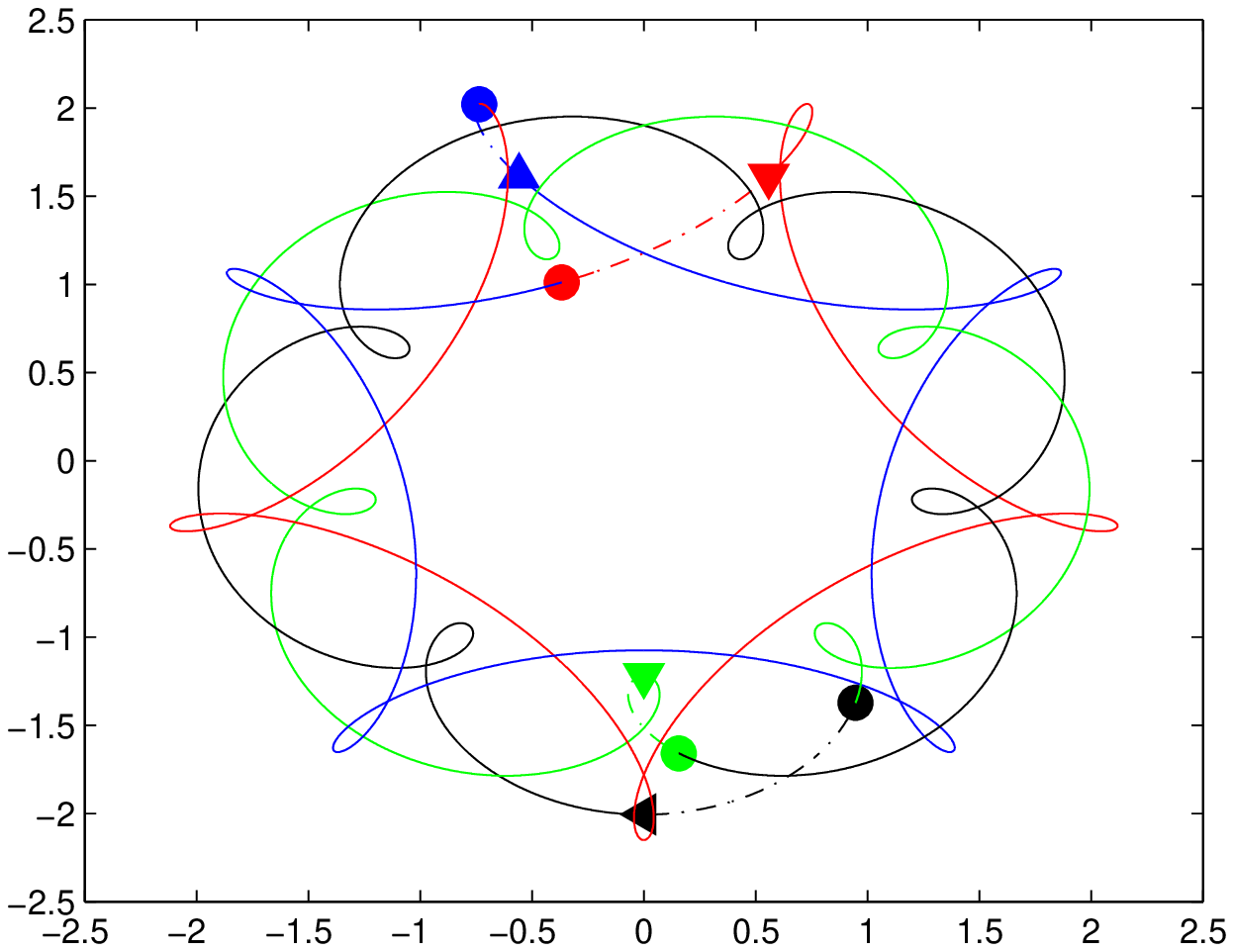}
\includegraphics[height=5cm,width=.32\textwidth]{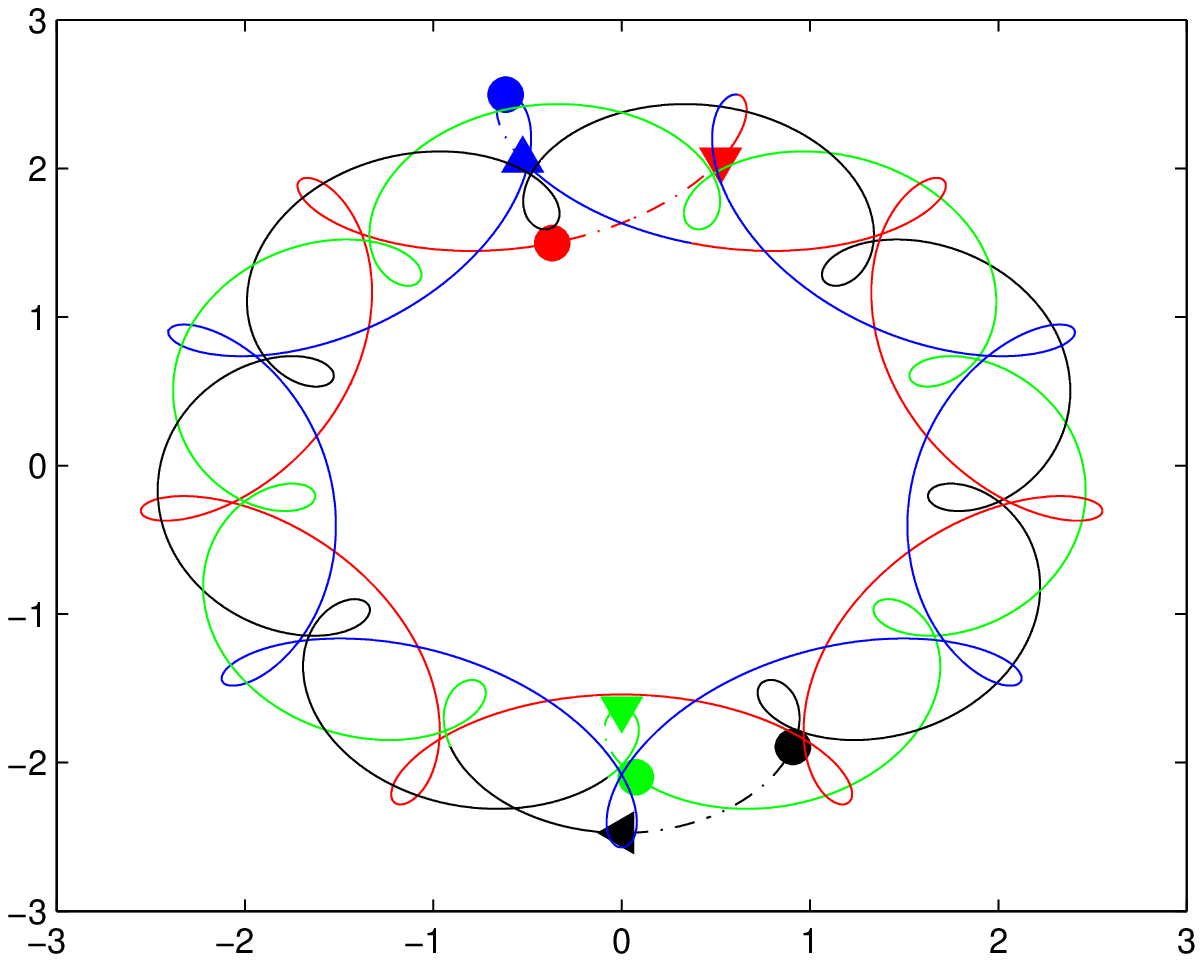} %very stable.
\caption{\small  Double Periodic Solutions when $\mu=1$ and  $\theta = \frac{P}{Q}\pi$ with  $P$ even and $Q$ odd. From left to right: $(\theta,\mu)=(\frac{6\pi}{7},1)$;$(\theta,\mu)=(\frac{8\pi}{9},1)$; $(\theta,\mu)=(\frac{12\pi}{13},1)$. }
\label{case4}\end{figure}

\begin{figure}
\includegraphics[height=5cm,width=.32\textwidth]{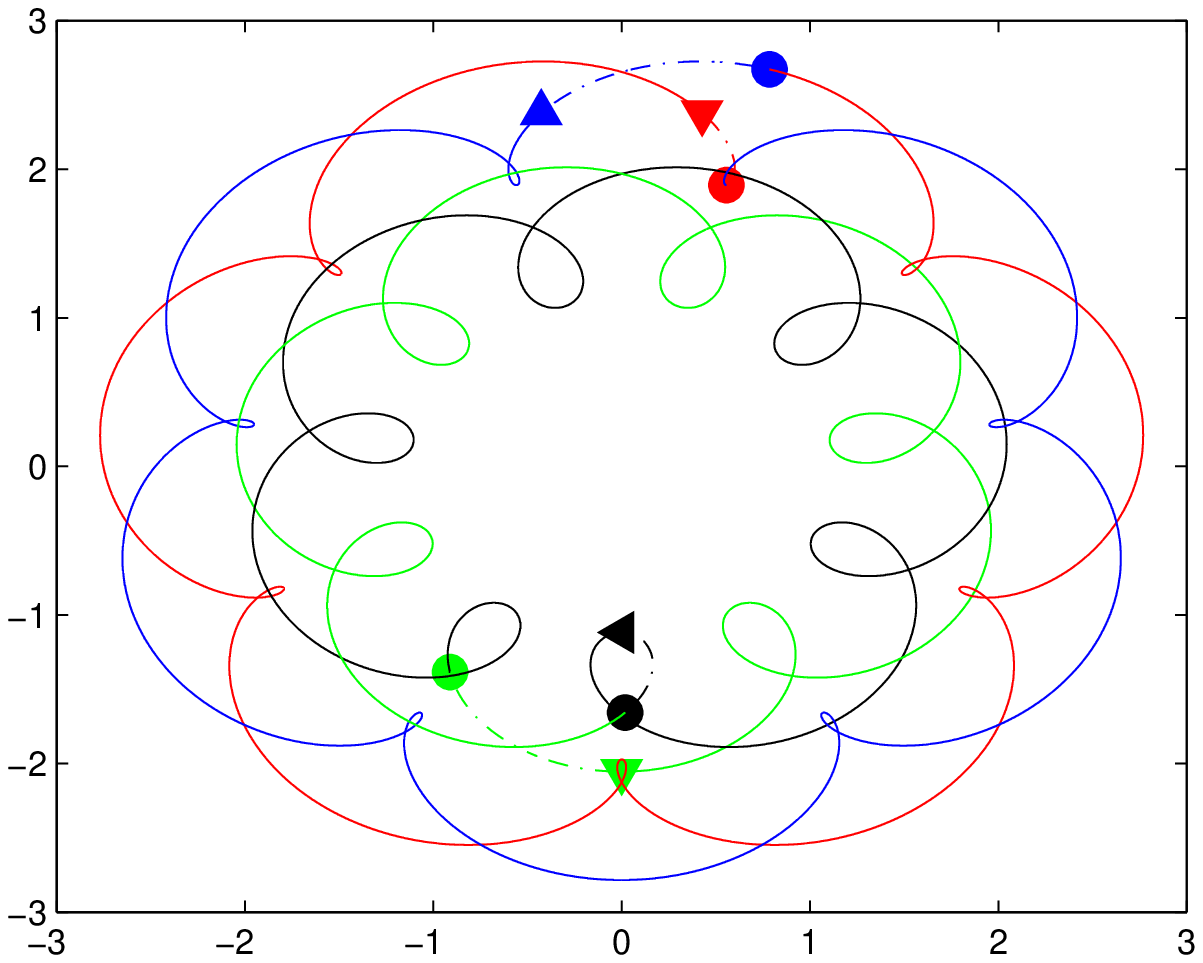}
\includegraphics[height=5cm,width=.32\textwidth]{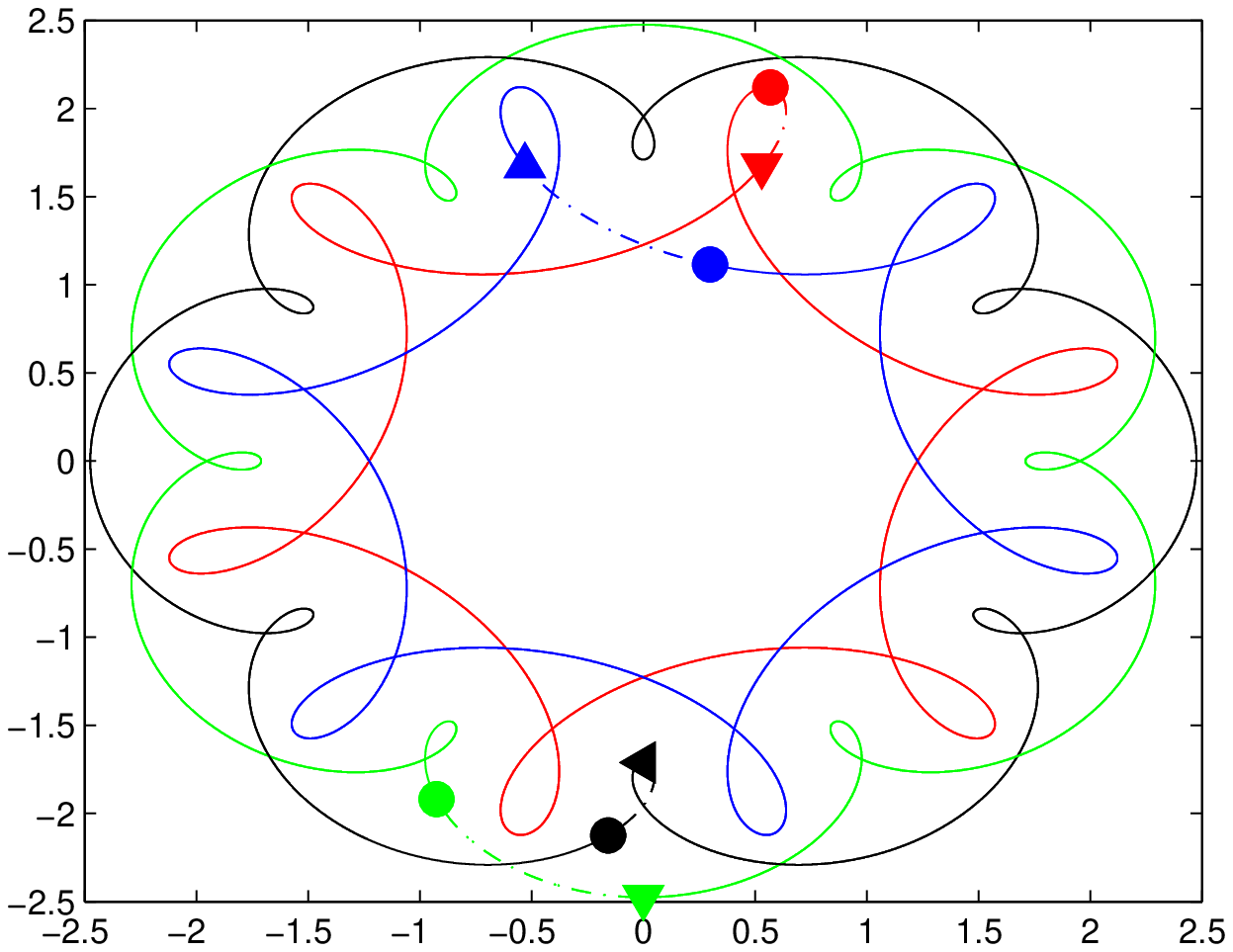}
\includegraphics[height=5cm,width=.32\textwidth]{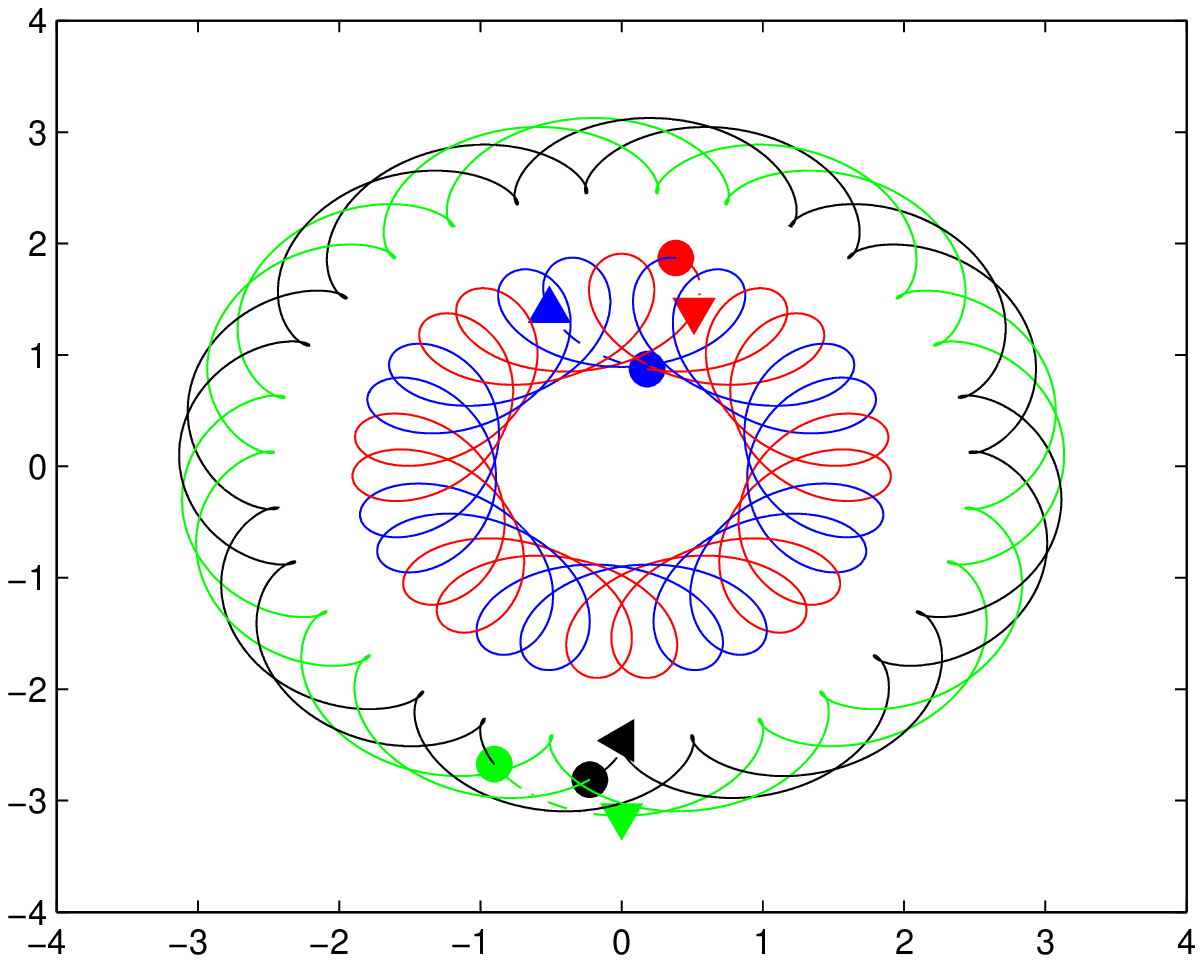}
\caption{\small Periodic Solutions for $\theta>\pi$. From left to right $(\theta, \mu)= (\frac{12\pi}{11}, 1.5)$ ,$(\theta, \mu)= (\frac{13\pi}{12}, 0.8)$,$(\theta, \mu)= (\frac{33\pi}{31}, 0.5)$. }
\label{figA}\end{figure}

\section{Linear Stability of the Periodic Solutions }\label{sec4}
%\begin{proof}[The proof of theorem \ref{Thm:LS}]
%\end{proof}
In this section, we provide a rigorous computation to study the linear stability of the periodic solutions. The periodic solutions generated by the local minimizers will be regarded as a $\mathcal{T}$-periodic solutions to a Hamiltonian system. The linear stability of the periodic solutions will be determined by the eigenvalues of their corresponding monodromy matrix.
%In theory all calculations could be done by paper and pencil, but in practice the number of operations, even if they are very basic and trivial, exceeds human resources.
Suppose that $\gamma(t)$ is a $\mathcal{T}$-periodic solution to the Hamiltonian system $\dot{\gamma}=J\nabla H(\gamma)$, where $J=\left[\begin{array}{ll} 0 & I\\ -I & 0 \end{array}\right]$ is the standard symplectic matrix and $I$ is the appropriately sized identity matrix.  Let $X(t)$ be the fundamental matrix solution to
$$\dot{\xi}=JD^2H(\gamma(t))\xi, \hspace{0.2cm} \xi(0)=I.$$
$X(t)$ is symplectic and satisfies $X(t+\mathcal{T})=X(t)X(\mathcal{T})$ for all $t$. The matrix $X(\mathcal{T})$ is called the monodromy matrix whose eigenvalues, the characteristic multipliers, determine the linear stability of the periodic solution. Since every integral in the $n$-body problem yields a multiplier of $+1$, there are eight $+1$ multipliers for a periodic orbit in the planar problem. It is natural to define the linear stability of a periodic solution by examining stabiltiy on the reduced quotient space.
\begin{definition}
A periodic solution of the planar $n$-body problem has eight trivial
characteristic multipliers of $+1$. The solution is spectrally stable if the remaining
multipliers lie on the unit circle and linearly stable if, in addition, the monodromy matrix
$X(T)$ restricted to the reduced space is diagonalizable.
\end{definition}

Here we apply standard symplectic transforms to reduce Hamiltonian system to a 10 dimension Hamiltonian system. Such reductions have been constructed in \cite{OuXie} and we include it here for the sake of completeness. The monodromy matrix of the periodic solution $\gamma(t)$ in the reduced system has a pair of $+1$ eigenvalues and the remaining eight eigvalues must be on the unit circle if the solution is linearly stable.

To eliminate the trivial $+1$ multipliers of a periodic solution, we use Jacobi coordinates and symplectic polar coordinates (see chapter 7 in \cite{MHO2}). Denote $p_i=m_i\dot{q}_i$ as the momentum coordinates and let $\mu_i=\sum_{j=1}^i m_j$ and $M_i=\frac{m_i\mu_{i-1}}{\mu_i}$. Then let
$$\begin{array}{ll}
g_4={\frac {{ m_4}\,{ q_4}+{ m_3}\,{ q_3}+{ m_2}\,{ q_2}+{
m_1}\,{ q_1}}{{ m_1}+{ m_2}+{ m_3}+{ m_4}}}, & G_4={ p_4}+{ p_3}+{ p_2}+{ p_1};\\
u_2=q_2 - q_1, & v_2={\frac {{ \mu_1}\,{ p_2}}{{ \mu_2}}}-{\frac {{ m_2}\,{ p_1}}{{
 \mu_2}}};\\
u_3={ q_3}-{\frac {{ m_2}\,{ q_2}+{ m_1}\,{ q_1}}{{ m_1}+{
m_2}}}, & v_3={\frac {{ \mu_2}\,{ p_3}}{{ \mu_3}}}-{\frac {{ m_3}\, \left( {
 p_2}+{ p_1} \right) }{{ \mu_3}}};\\
u_4={ q_4}-{\frac {{ m_3}\,{ q_3}+{ m_2}\,{ q_2}+{ m_1}\,{
q_1}}{{ m_1}+{ m_2}+{ m_3}}}, & v_4={\frac {{ \mu_3}\,{ p_4}}{{ \mu_4}}}-{\frac {{ m_4}\, \left( {
 p_3}+{ p_2}+{ p_1} \right) }{{ \mu_4}}}.
\end{array}$$
The new Hamiltonian is $$H_2(u_2,u_3,u_4,v_2,v_3,v_4)=\,{\frac {{{ v_2}}^{2}}{{2 M_2}}}+\,{\frac {{{ v_3}}^{2}}{{
2 M_3}}}+\,{\frac {{{ v_4}}^{2}}{{ 2 M_4}}}-U_2.$$
%where
% $$ U_2={\frac {{ m_1}\,{ m_2}}{ \left| { u_2} \right| }}+{ m_1}\,{ m_3} \left(  \left| {\frac {{ m_2}\,{ u_2}}{{ \mu_2}}}+{ u_3} \right|  \right) ^{-1}+{ m_1}\,{ m_4} \left(  \left| { u_4}+{\frac {{ m_2}\,{ u_2}}{{ \mu_2}}}+{\frac {{ m_3}\,{ u_3}}{{ \mu_3}}} \right|  \right) ^{-1}+$$ $${ m_2}\,{ m_3} \left(  \left| -{ u_3}+{\frac {{ m_1}\,{ u_2}}{{ \mu_2}}} \right|  \right) ^{-1}+{ m_2}\,{ m_4} \left(  \left| -{ u_4}+{\frac {{ m_1}\,{ u_2}}{{ \mu_2}}}-{\frac {{ m_3}\,{ u_3}}{{ \mu_3}}} \right| \right) ^{-1}+$$ $${ m_3}\,{ m_4} \left(  \left| -{ u_4}+{\frac {{ \mu_2}\,{ u_3}}{{ \mu_3}}} \right|  \right) ^{-1}.$$
$U_2$ is the corresponding potential energy in the new coordinates and similarly $U_3, U_4$ in the below are the potential energy in the different cooordinates.The new Hamiltonian is independent of $g_4$ and $G_4$, the center of mass and total linear momentum respectively. This reduces the dimension by four from 16 to 12.\\
Next we change to symplectic polar coordinates to eliminate the integrals due to the angular momentum and rotational symmetry. Set
$$u_i=(r_i\cos(\theta_i), r_i\sin(\theta_i))$$ %\hspace{0.3cm} \hbox{ and } \hspace{0.3cm}
$$v_i=(R_i\cos(\theta_i)-\frac{\Theta_i}{r_i}\sin(\theta_i), R_i\sin(\theta_i)+\frac{\Theta_i}{r_i}\cos(\theta_i))$$
for $i=2,3,4$. Then the new Hamiltonian  becomes
$$H_3=\,{\frac {{{ R_2}}^{2}{{ r_2}}^{2}+{{ \Theta_2}}^{2}}{{2 M_2}
\,{{ r_2}}^{2}}}+\,{\frac {{{ R_3}}^{2}{{ r_3}}^{2}+{{
\Theta_3}}^{2}}{{ 2M_3}\,{{ r_3}}^{2}}}+\,{\frac {{{ R_4}}^{2}{{
 r_4}}^{2}+{{ \Theta_4}}^{2}}{{ 2M_4}\,{{ r_4}}^{2}}}-U_3.$$
Note that the Hamiltonian $H_3$ has only terms of difference angles. This suggests making a final symplectic change of coordinates by leaving the radial variables alone. Use the generating function $ S={ \Theta_2}\,{ x_2}+{ \Theta_3}\, \left( {x_3}+{ x_2} \right) +{ \Theta_4}\, \left( { x_4}+{ x_3}+{ x_2} \right)$, and so
 $$\theta_2 = x_2, \theta_3 = x_3+x_2, \theta_4 = x_4+x_3+x_2;$$
 $$ \Theta_2 = X_2-X_3, \Theta_3 = X_3-X_4; \Theta_4 = X_4.$$
 The new Hamiltonian will be independent of $x_2$ which means that $X_2=\Theta_2+\Theta_3+\Theta_4$ (total angular momentum) is an integral, and $x_2$ is an ignorable variable. Setting $X_2=c$ and plugging into the Hamiltonian $H_3$ yields
 $$H_4= \,{\frac {{{ R_2}}^{2}{{ r_2}}^{2}+ \left( {c}-{ X_3}
 \right) ^{2}}{{ 2M_2}\,{{ r_2}}^{2}}}+\,{\frac {{{ R_3}}^{2}{
{ r_3}}^{2}+ \left( { X_3}-{ X_4} \right) ^{2}}{{ 2M_3}\,{{
r_3}}^{2}}}+\,{\frac {{{ R_4}}^{2}{{ r_4}}^{2}+{{ X_4}}^{2}}{{
 2M_4}\,{{ r_4}}^{2}}}-U_4.$$
 This reduces the system to 10 dimensions, with the variables $z=(r_2, r_3, r_4, x_3, x_4, R_2, R_3, R_4,$ $ X_3, X_4)$. \\
 Because $H_4$ is a Hamiltonian system, the monodromy matrix $X(\mathcal{T})$ is symplectic. Its periodic solution $\gamma(t)$ will generate an eigenvector of $X(\mathcal{T})$. In fact, $\gamma(t)$ is a solution of $\dot{z}=J\nabla H_4(z)$ with initial condition $z(0)=\gamma(0)$. Then $\ddot{\gamma}(t)=JD^2H_4(\gamma(t))\dot{\gamma}(t)$. This implies that $\dot{\gamma}(t)$ satisfies the associated linear system
$$\dot{\xi}=JD^2H_4(\gamma(t))\xi,\hspace{0.2cm} \xi(0)=\dot{\gamma}(0).$$
Since $X(t)$ is the fundamental solution of the above linear system, $\dot{\gamma}(t)=X(t)\dot{\gamma}(0)$, which implies $X(\mathcal{T})\dot{\gamma}(0)=\dot{\gamma}(\mathcal{T})=\dot{\gamma}(0)$. Because $X(\mathcal{T})$ is symplectic, $J^{-1}X(\mathcal{T})J= X(\mathcal{T})$. Then $X(\mathcal{T})J\dot{\gamma}(0)=J\dot{\gamma}(0)$. So the Monodromy matrix has two $+1$ multipliers, leaving the remaining eight eigenvalues to determine the linear stability of the periodic solution. Because the eigenvalues of a symplectic matrix occur in quadruples $(\lambda,$ $ \lambda^{-1}, $ $\bar{\lambda}$, $\bar{\lambda}^{-1})$, we have the following lemma. %So we have proved the following results \\ %{\bf \it In the polar coordinates, the angle $\theta$ is periodic in the quotient space $R/[-\pi/2,3\pi/2]$. In time period $\mathcal{T}$, there are jumping values. } \\
%\begin{lemma}
%Suppose that $\gamma(t)$ is a $\mathcal{T}$-periodic solution of the Hamiltonian system of the 4-body problem. Let $X(\mathcal{T})$ is the monodromy matrix of the periodic solution $\gamma(t)$  in the reduced Hamiltonian system $H_4$. Then the periodic solution $\gamma(t)$ is linearly stable if the remaining eight eigenvalues of the 10 dimensional symplectic matrix $X(\mathcal{T})$ are on unit circle.
%\end{lemma}
%\begin{lemma}
%Let $v_1=\dot{\gamma}(0)/|\dot{\gamma}(0)|$. Let $v_1, v_2,v_3,v_4,v_5$ be an orthonormal set of vectors such that $Y_0=[Jv_1,Jv_2,\cdots,Jv_5,v_1,v_2,\cdots, v_5]$ is an orthogonal, symplectic matrix. Then $X(\mathcal{T})=Y(\mathcal{T})Y_0^{-1}$ is similar to $W=Y_0^{-1}Y(\mathcal{T})$. $Y_0^{-1}v_1=e_6$ and $We_6=e_6$. $Y_0^{-1}Jv_1=e_1$ and $We_1=e_1$. Let $D$ be the matrix obtained from $W$ by deleting the first and sixth rows and columns. Then the linear stability of periodic orbits for planar-four body problem is determined by the eight eigenvalues of $D$.
%\end{lemma}
\begin{lemma}
Let $X$ be a symplectic matrix and $W=\frac{1}{2}(X+X^{-1})$. Then the eigenvalues of $X$ are all on the unit circle if and only if all of the eigenvalues of $W$ are real and in $[-1,1]$.
\end{lemma}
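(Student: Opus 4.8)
The plan is to reduce the whole statement to the scalar relation between an eigenvalue $\lambda$ of $X$ and the number $\tfrac12(\lambda+\lambda^{-1})$, and then to analyze when such a number is real and lies in $[-1,1]$.

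\textbf{Step 1 (locating the spectrum of $W$).} First I would show that the eigenvalues of $W$, counted with algebraic multiplicity, are exactly the numbers $\tfrac12(\lambda_j+\lambda_j^{-1})$, where $\lambda_1,\dots,\lambda_{2n}$ are the eigenvalues of $X$. Since $X$ is invertible, Cayley--Hamilton gives $X^{-1}=r(X)$ for some polynomial $r$ with $r(\lambda_j)=\lambda_j^{-1}$ for every eigenvalue; hence $W=q(X)$ with $q(t)=\tfrac12\bigl(t+r(t)\bigr)$, and the spectral mapping theorem yields $\operatorname{spec}(W)=\{\tfrac12(\lambda_j+\lambda_j^{-1})\}_j$ with the correct multiplicities. (Equivalently: $X$ and $X^{-1}$ commute, so a common similarity puts them in upper triangular form $T$ and $T^{-1}$, and the diagonal of $\tfrac12(T+T^{-1})$ displays the claimed eigenvalues.)

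\textbf{Step 2 (the two implications).} For the forward direction, if every $\lambda_j$ lies on the unit circle then $\lambda_j^{-1}=\overline{\lambda_j}$, so $\tfrac12(\lambda_j+\lambda_j^{-1})=\operatorname{Re}\lambda_j$, which is real with absolute value at most $1$; thus all eigenvalues of $W$ are real and in $[-1,1]$. For the converse, suppose $\mu_j:=\tfrac12(\lambda_j+\lambda_j^{-1})$ is real with $|\mu_j|\le 1$ for each $j$. Then $\lambda_j$ is a root of $t^2-2\mu_j t+1=0$, whose two roots are $\mu_j\pm i\sqrt{1-\mu_j^2}$ (using $\mu_j^2\le 1$), each of modulus $\sqrt{\mu_j^2+(1-\mu_j^2)}=1$. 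Hence $|\lambda_j|=1$ for all $j$. A convenient sanity check is the contrapositive: if $|\lambda_j|\ne 1$ then either $\mu_j$ is non-real, or $\lambda_j$ is real and $|\mu_j|=\tfrac12\bigl(|\lambda_j|+|\lambda_j|^{-1}\bigr)>1$.

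\textbf{Main obstacle.} There is no deep difficulty here; the one point that genuinely needs care is the multiplicity bookkeeping in Step 1, because the map $\lambda\mapsto\tfrac12(\lambda+\lambda^{-1})$ is generically two-to-one, so one must be sure that $W$ acquires no spurious eigenvalue and loses none. Writing $W=q(X)$ and invoking the spectral mapping theorem (or, equivalently, simultaneous triangularization of the commuting pair $X,X^{-1}$) disposes of this cleanly, after which only the elementary algebra of Step 2 remains. Note that the symplectic hypothesis is not used in the equivalence itself --- only invertibility of $X$ --- but it is precisely what forces the eigenvalues to occur in quadruples $(\lambda,\lambda^{-1},\overline{\lambda},\overline{\lambda}^{-1})$, which is what makes the lemma a useful reformulation of spectral stability in the later computation.
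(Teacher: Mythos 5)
Your proposal is correct, and at its core it follows the same strategy as the paper (and as Roberts' Lemma 4.1, which the paper cites): reduce everything to the scalar map $f(\lambda)=\tfrac12(\lambda+\lambda^{-1})$ and analyze where it sends the unit circle and its complement. The differences are in execution, and on one point your version is actually tighter. The paper establishes the spectral correspondence only by the eigenvector computation $X\vec v=\lambda\vec v\Rightarrow W\vec v=\tfrac12(\lambda+\lambda^{-1})\vec v$, which shows $f(\operatorname{spec}(X))\subseteq\operatorname{spec}(W)$ but, for a non-diagonalizable $X$, does not by itself show that \emph{every} eigenvalue of $W$ arises this way; that inclusion is exactly what the ``only if'' direction needs. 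Your Step 1 (writing $W=q(X)$ via Cayley--Hamilton and invoking spectral mapping, or simultaneously triangularizing the commuting pair $X$, $X^{-1}$) closes that gap and gets the multiplicities right, which you correctly flag as the one delicate point. For the scalar analysis, the paper quotes the topological description of the Joukowski map (unit circle onto $[-1,1]$, exterior of the disk homeomorphically onto $\mathbf{C}\setminus[-1,1]$), whereas you verify the same facts by elementary algebra: $\mu=\operatorname{Re}\lambda$ when $|\lambda|=1$, and conversely the roots of $t^2-2\mu t+1=0$ have modulus $\sqrt{\mu^2+(1-\mu^2)}=1$ when $\mu\in[-1,1]$ is real. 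Your closing remark that symplecticity is irrelevant to the equivalence itself (only invertibility matters) and enters only through the quadruple symmetry of the spectrum is accurate and worth keeping.
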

\begin{proof}
The lemma and its proof are similar to Lemma 4.1 in Roberts' paper \cite{RG}. We prove it here for the sake of completeness. Suppose that $\vec{v}$ is an eigenvector of the symplectic matrix $X$ with eigenvalue $\lambda$, i.e. $X\vec{v}=\lambda \vec{v}$. Then $X^{-1}\vec{v}=\lambda^{-1}\vec{v}$.  $W\vec{v}=\frac{1}{2}(X+X^{-1})\vec{v}=\frac{1}{2}(\lambda+\lambda^{-1})\vec{v}$ from which it follows that $\frac{1}{2}(\lambda+\lambda^{-1})$ is an eigenvalue of $W$. The map $f : \mathcal{C}\mapsto\mathcal{C}$ given by $f(\lambda) = \frac{1}{2}(\lambda+\lambda^{-1})$ takes the unit circle onto the
real interval $[-1, 1]$ while mapping the exterior of the unit disk homeomorphically onto
$\mathcal{C}\backslash[-1, 1]$. The lemma follows this assertion immediately.
\end{proof}
Because the eigenvalue pairs $\lambda$ and $\lambda^{-1}$ of $X$ are mapped to the same eigenvalue $\frac{1}{2}(\lambda+\lambda^{-1})$ of $W$, the multiplicity of eigenvalues of $W$ must be at least two. The two $+1$ multipliers is still mapped to $+1$ with multiplicity two.  The remaining eight non-one eigenvalues on the unit circle of $X$ for linear stable periodic solution have been mapped to four pairs of real eigenvalues in $(-1,1)$.

%\subsection{Numerical Calculations}
Numerically, a MATLAB program was written using a Runge-Kutta-Fehlberg method with local truncation error of
order four to compute the monodromy matrix $X(\mathcal{T})$ of the  reduced linearized Hamiltonian $H_4$ for a periodic solution of planar 4-body problem.  Then we compute $W=\frac{1}{2}(X+X^{-1})$ and  its eigenvalues. In order to conclude the stability, we first need to improve the estimates of our SPBC $\vec{a}_0$ and initial conditions of a periodic solutions. There are two steps in searching a solution satisfying SPBC. The first step is to find a solution for a fixed boundary and the second step is to vary the boundary to find a minimizer. In this way, we can easily get a good approximation of the initial conditions of the star pentagon and other solutions with an  absolute error tolerance of $10^{-12}$. To check whether the global error is within the expected accuracy, we also compute the monodromy matrix and its eigenvalues with several different step sizes for each case as in \cite{OuXie}. By our computation, the four pairs of eigenvalues are  all real and distinct  in $(-1,1)$. Returning to the full monodromy matrix, the corresponding eigenvalues are distinct and on the unit circle. Therefore, the corresponding periodic solutions are all linearly stable.

%Numerically, a MATLAB program was written using a Runge-Kutta-Fehlberg method to compute the monodromy matrix $X(\mathcal{T})$ of the  reduced linearized Hamiltonian $H_4$ for the star pentagon choreographic solution presented in figure \ref{fig1}. Then we compute $W=\frac{1}{2}(X+X^{-1})$ and  its eigenvalues.\\

%Based on the initial conditions and  $\mathcal{T}=20$, the four pairs of eigenvalues other than the multipliers $+1$ are $[0.75737,$ $0.75737,$ $0.237095,$ $0.237095,$ $- 0.298735,$ $ - 0.298735,$ $- 0.458522,$ $- 0.458522].$
%$[ 0.761537,$  $  0.761537, $  $ 0.235841,$  $ 0.235841,$  $  - 0.299445, $  $ - 0.299445, $  $ - 0.456736, $  $- 0.456736].$ This paper [ 1.05241, 1.05241, 0.75737 - 0.0000000000000875288*i, 0.0000000000000875288*i + 0.75737, 0.237095 - 0.00000000000100314*i, 0.00000000000100314*i + 0.237095, - 0.00000000000342911*i - 0.298735, 0.00000000000342911*i - 0.298735, - 0.000000000000104006*i - 0.458522, 0.000000000000104006*i - 0.458522]
 %The four pairs of eigenvalues are real and distinct  in $(-1,1)$. Returning to the full monodromy matrix, the corresponding eigenvalues are distinct and on the unit circle. Therefore, the star pentagon choreographic solution is linearly stable.

 Here we only list the initial conditions for some stable orbits with different rotation angle $\theta$ and mass ratio $\mu$ in theorem \ref{main2}. \\% Eigenvalues of the monodromy matrix can be computed from the initial conditions.  All the orbits are extended from the initial four pieces connecting from $q(0)$ to $q(T)$ by extension formula \eqref{qet}. We use $T=1$ and $m_1=m_3=1$ in our calculation. As we point out in Remark \ref{remark1}, non-circular minimizers exist for $(\theta,\mu)$ out of the region $\Omega$. Most of the figures can be generated in any Newtonian $n$-body simulation program by using these initial data. \\ %However, some examples are highly unstable and it is hard to produce satisfactory numerical figures.
(1) $(\theta,\mu)=(\frac{4\pi}{5},1)$, $\mathcal{T}=20$.
$$q_1(0)= [-0.2997475302,  0.4125670813], \dot{q}_1(0)=[1.114760563,    0.8099231855];$$
$$q_2(0)=[1.195555973, -0.5096218631], \dot{q}_2(0)=[-0.8600513847, -0.003559696213];$$
$$q_3(0)= [-1.011040523,   1.391577897],\dot{q}_3(0)=[0.01444607736,   0.01049661818];$$
$$q_4(0)= [0.1152320804,  -1.294523115],\dot{q}_4(0)=[ -0.2691552559,   -0.8168601074].$$  %variation 5.133063e+000, [ 0.6676768845, 1.115022917, 0.5099609578, 0.6676768751, 1.115022937, 0.5099609569]
(2) $(\theta,\mu)=(\frac{4\pi}{5},0.5)$, $\mathcal{T}=20$.
 $$q_1(0)= [-0.03365216432, 0.04631823056], \dot{q}_1(0)=[0.8791868243,  0.6387681838];$$
 $$q_2(0)=[1.229294751, -0.7817016926],  \dot{q}_2(0)=[-0.904358996, -0.2059939437];$$
 $$q_3(0)= [-0.762779971,   1.049876561], \dot{q}_3(0)=[-0.1893205096, -0.1375492335];$$
 $$q_4(0)= [0.3635695192,  -1.410687891],\dot{q}_4(0)=[-0.4753736332, -0.7964439574].$$ %variation is 3.070172e+000, [ 0.5350476184, 1.354971279, 0.05725248156, 0.6625690539, 0.674050662, 0.8789715597] max(eig)=1.0021
(3) $(\theta,\mu)=(\frac{4\pi}{5},1.5)$, $\mathcal{T}=20$.
 $$q_1(0)= [-0.4954623785,  0.6819454601], \dot{q}_1(0)=[1.286530639,    0.93472253];$$
  $$q_2(0)=[ 1.17942819, -0.3292799005], \dot{q}_2(0)=[-0.8390962366,  0.1359441271];$$
 $$q_3(0)= [-1.196730568,   1.647158317], \dot{q}_3(0)=[0.1671195441,   0.121421328];$$
 $$q_4(0)= [ -0.05129955936,  -1.223455951], \dot{q}_4(0)=[-0.1300038857, -0.8400400324].$$ %variation is 7.354403e+000variation is 7.354403e+000
(4) $(\theta,\mu)=(\frac{7\pi}{8},1)$, $\mathcal{T}=16$.
   $$q_1(0)= [ -0.3657149699, 0.8829140403], \dot{q}_1(0)=[1.186543081,   0.4914819077];$$
   $$q_2(0)=[1.104628492, -1.169044107], \dot{q}_2(0)=[-0.7831641034,   0.3494036031];$$
     $$q_3(0)= [-0.7844622398,  1.893859379],\dot{q}_3(0)=[-0.09666570501, -0.04003861407];$$
      $$q_4(0)= [ 0.04554871816, -1.607729312],\dot{q}_4(0)=[-0.3067132724,  -0.8008468968].$$%Phi=7pi/8, variation is 4.705669e+000, [ 0.5731698431, 1.502778945, 0.9556592707, 0.5731698911, 1.502778966, 0.9556592488]
(5) $(\theta,\mu)=(\frac{7\pi}{8},1.5)$, $\mathcal{T}=16$.
  $$q_1(0)= [-0.5350773026,   1.291790881], \dot{q}_1(0)=[1.350250694,  0.5592959079];$$
   $$q_2(0)=[1.099754106, -0.9458393211], \dot{q}_2(0)=[-0.7454903418,  0.4733604227];$$
     $$q_3(0)= [-0.9513025743,   2.296647577],\dot{q}_3(0)=[0.05662296548, 0.02345549633];$$
      $$q_4(0)= [ -0.1088341884,  -1.446452984],\dot{q}_4(0)=[-0.1924254315, -0.8618613589].$$ %Phi=7pi/8, variation is 6.792303e+000, [ 0.6540832718, 1.294699266, 1.398224374, 0.5749989621, 1.943136851, 0.6652771972]
 (6) $(\theta,\mu)=(\frac{7\pi}{9},1)$, $\mathcal{T}=36$.
  $$q_1(0)= [-0.27004813,  0.3218308291], \dot{q}_1(0)=[1.071180019,  0.8988296083];$$
  $$q_2(0)=[1.207641964, -0.3501521227], \dot{q}_2(0)=[-0.8488458756, -0.1158622427];$$
   $$q_3(0)= [-1.072721533,   1.278419741], \dot{q}_3(0)=[0.03916771092, 0.03286635046];$$
   $$q_4(0)= [ 0.1351276988,  -1.250098447], \dot{q}_4(0)=[-0.2615018538,  -0.815833716].$$ %Phi=7pi/9, variation is 5.243419e+000, [ 0.7000339698, 1.044489379, 0.4201203102, 0.7000339725, 1.044489378, 0.4201203071]

% $(\theta,\mu)=(\frac{5\pi}{6},1)$, $\mathcal{T}=$.    $$q_1(0)= [], \dot{q}_1(0)=[];$$   $$q_2(0)=[], \dot{q}_2(0)=[];$$     $$q_3(0)= [],\dot{q}_3(0)=[];$$      $$q_4(0)= [],\dot{q}_4(0)=[].$$
\begin{remark}
Without the symplectic reduction, the original Hamiltonian system of the planar four-body problem has 16 dimension. To check the stability of a periodic solution, one can directly compute the eigenvalues of its Monodromy matrix. Thanks a Matlab Program by Professor Robert Vanderbei, the largest absolute values of the eigenvalues for above examples are all 1.0000.
\end{remark}

\section{Other solutions from different SPBC in the planar 4-body problem}
We decide to close our paper by presenting a different SPBC in the planar 4-body problem. They produce some interesting orbits including triple-choreographic solutions where orbits consist of  three closed curves. The configurations formed by four-body with some symmetries can be isosceles triangle with one on the axis of symmetry of the triangle, rectangle, square, diamond, kite, and collinear.   We numerically found lots of periodic solutions by the different combinations of these symmetrical configurations. Some of known planar 4-body periodic orbits can be found by this method.   \\

  \begin{example}\label{ex1} %from program 40.
   The SPBC is  given by two appropriate configuration subspaces $\mathbf{A} \subset (\mathbf{R}^2)^4$ and $\mathbf{B}\subset (\mathbf{R}^2)^4$ as follows.
$$\mathbf{A}=\left\{ \left.  \left( \begin{array}{cc} 0 & a_1\\
0 & a_1-|a_2|\\ 0 & a_1-|a_2|-|a_3|\\
0 & \frac{-(m_1a_1+m_2(a_1-|a_2|)+m_3(a_1-|a_2|-|a_3|))}{m_4} \end{array} \right) \in (\mathbf{R}^2)^4\right| (a_1,a_2,a_3)\in \mathbf{R}^3 \right\},$$
and
$$\mathbf{B}=\left\{ \left. \left( \begin{array}{cc} 0 & -a_6\\
0 & (-m_3a_5-m_4a_5+m_1a_6)/m_2\\ -a_4 & a_5\\
m_3a_4/m_4 & a_5 \end{array} \right)R(\theta)\in (\mathbf{R}^2)^4\right| (a_4,a_5,a_6)\in \mathbf{R}^3 \right\}.$$
Geometrically, four bodies start from a collinear configuration $q(0)\in \mathbf{A}$ (circular spots in figures) and end at a triangle configuration $q(T)\in \mathbf{B}$ (triangular spots in figures).
\end{example}
(1) $\theta=\frac{4\pi}{5}$, $m_1=m_2=m_3=m_4=1$ (see left graph in  Figure \ref{f1ex1}).
 $$q_1(0)= [0,    1.4415], \dot{q}_1(0)=[0.1842, 0]; q_2(0)=[0,    0.7814], \dot{q}_2(0)=[-1.6060,  0];$$
 $$q_3(0)= [0,   -0.5094],\dot{q}_3(0)=[1.4062,  0]; q_4(0)= [ 0,   -1.7134],\dot{q}_4(0)=[0.0156, 0].$$ %program 40 case 2. Phi=4pi/5, variation is 6.164185e+000
(2) $\theta=\frac{4\pi}{5}$, $m_1=m_2= 1.5, m_3=m_4=1$ (see middle graph in  Figure \ref{f1ex1}).
$$q_1(0)= [0,    1.3354], \dot{q}_1(0)=[0.4047, 0]; q_2(0)=[0,    0.5743], \dot{q}_2(0)=[-1.6241,  0];$$
 $$q_3(0)= [0,   -0.8395],\dot{q}_3(0)=[1.6258,  0]; q_4(0)= [ 0,   -2.0250],\dot{q}_4(0)=[0.2033, 0].$$ %program 40 case 2.%Phi=4pi/5, variation is 9.389144e+000, [ 1.335413942, 0.7611442486, 1.41378492, 0.6709714733, 1.418208218, 0.5594732173]
(3) $\theta=\frac{4\pi}{5}$, $m_1=m_2= 0.9, m_3=m_4=1$ (see right graph in  Figure \ref{f1ex1}).
$$q_1(0)= [ 0,    1.4691 ], \dot{q}_1(0)=[0.1300, 0]; q_2(0)=[0,    0.8331], \dot{q}_2(0)=[-1.6031,  0];$$
 $$q_3(0)= [0,   -0.4315],\dot{q}_3(0)=[1.3541,  0]; q_4(0)= [ 0,   -1.6405],\dot{q}_4(0)=[ -0.0283, 0].$$ %program 40 case 2.
\begin{figure}
\includegraphics[height=4.5cm,width=.33\textwidth]{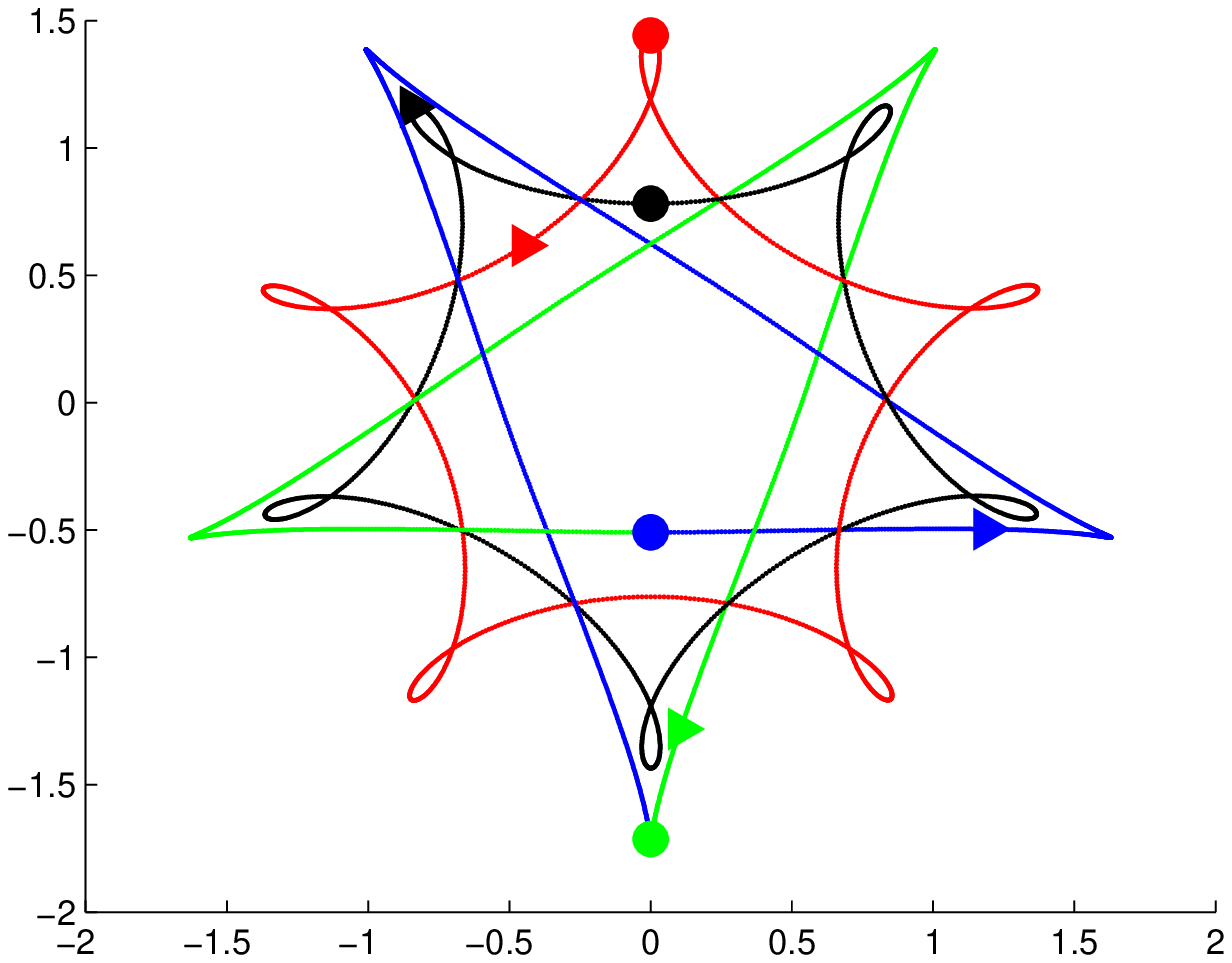}
\includegraphics[height=5cm,width=.32\textwidth]{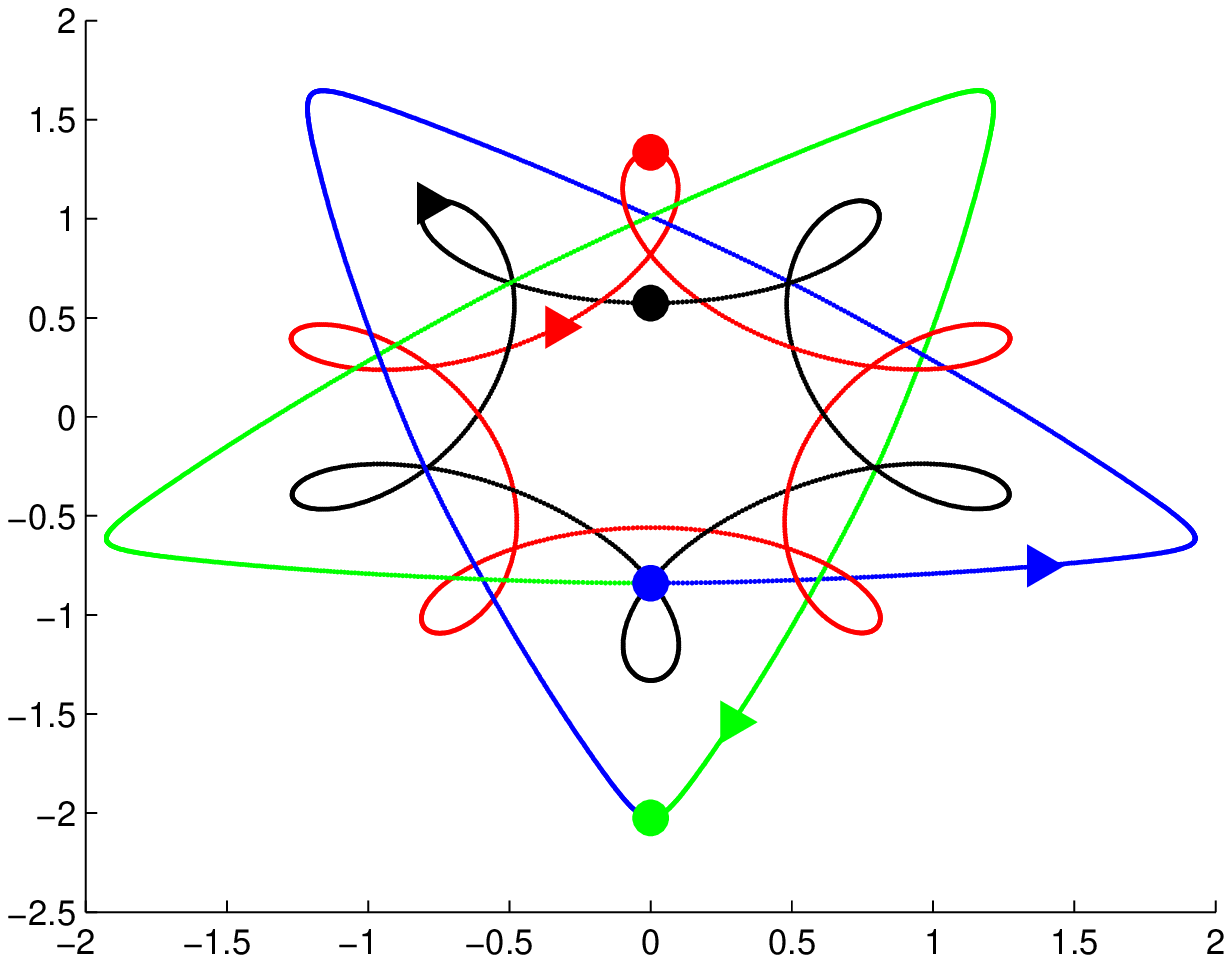}
\includegraphics[height=4.5cm,width=.33\textwidth]{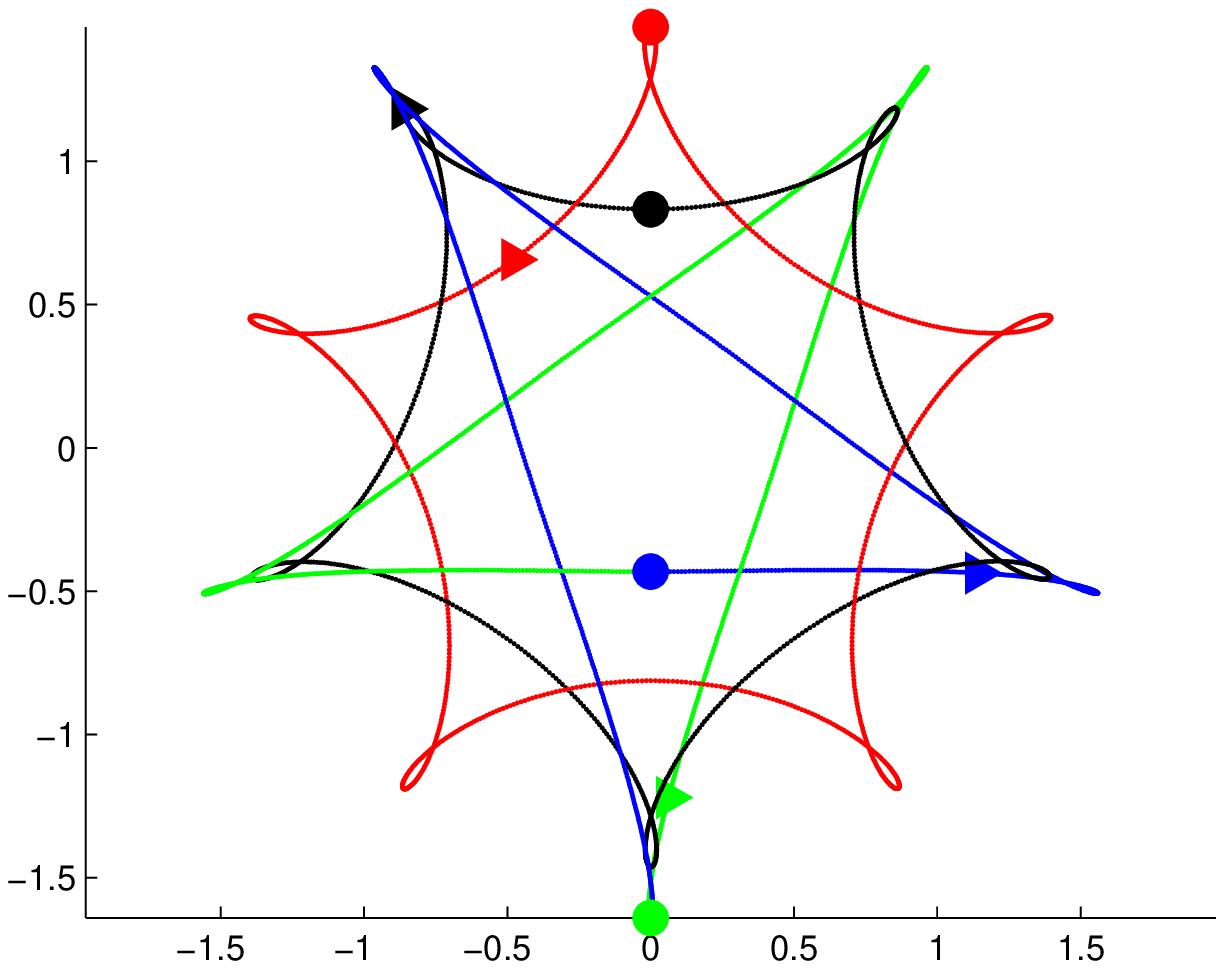}
\caption{\small Triple-choreographic Periodic Solutions for $\theta=\frac{4\pi}{5}$ with different masses in Example \ref{ex1}. }
\label{f1ex1}\end{figure}
\begin{figure}
\includegraphics[height=4.3cm,width=.32\textwidth]{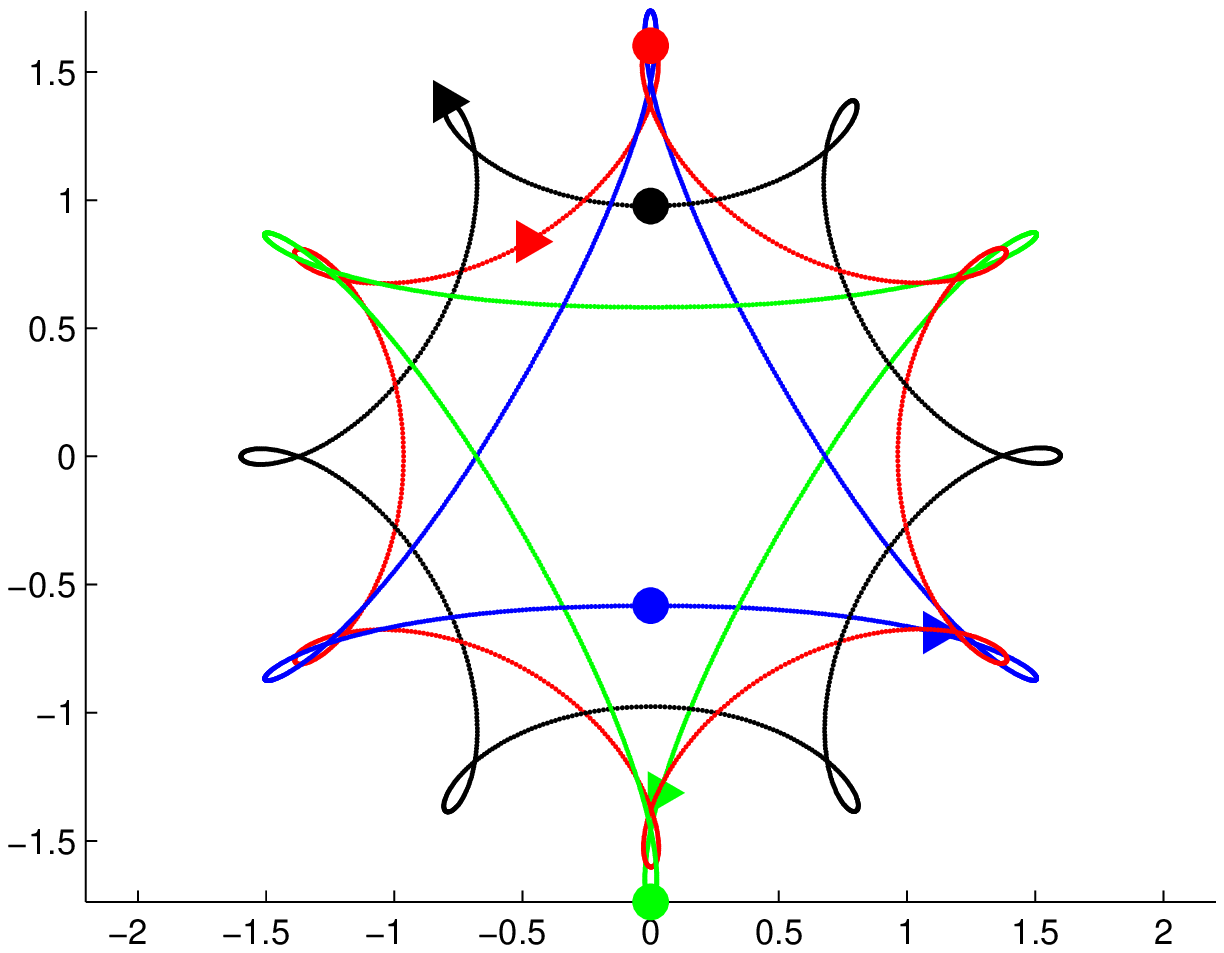}
\includegraphics[height=4.3cm,width=.32\textwidth]{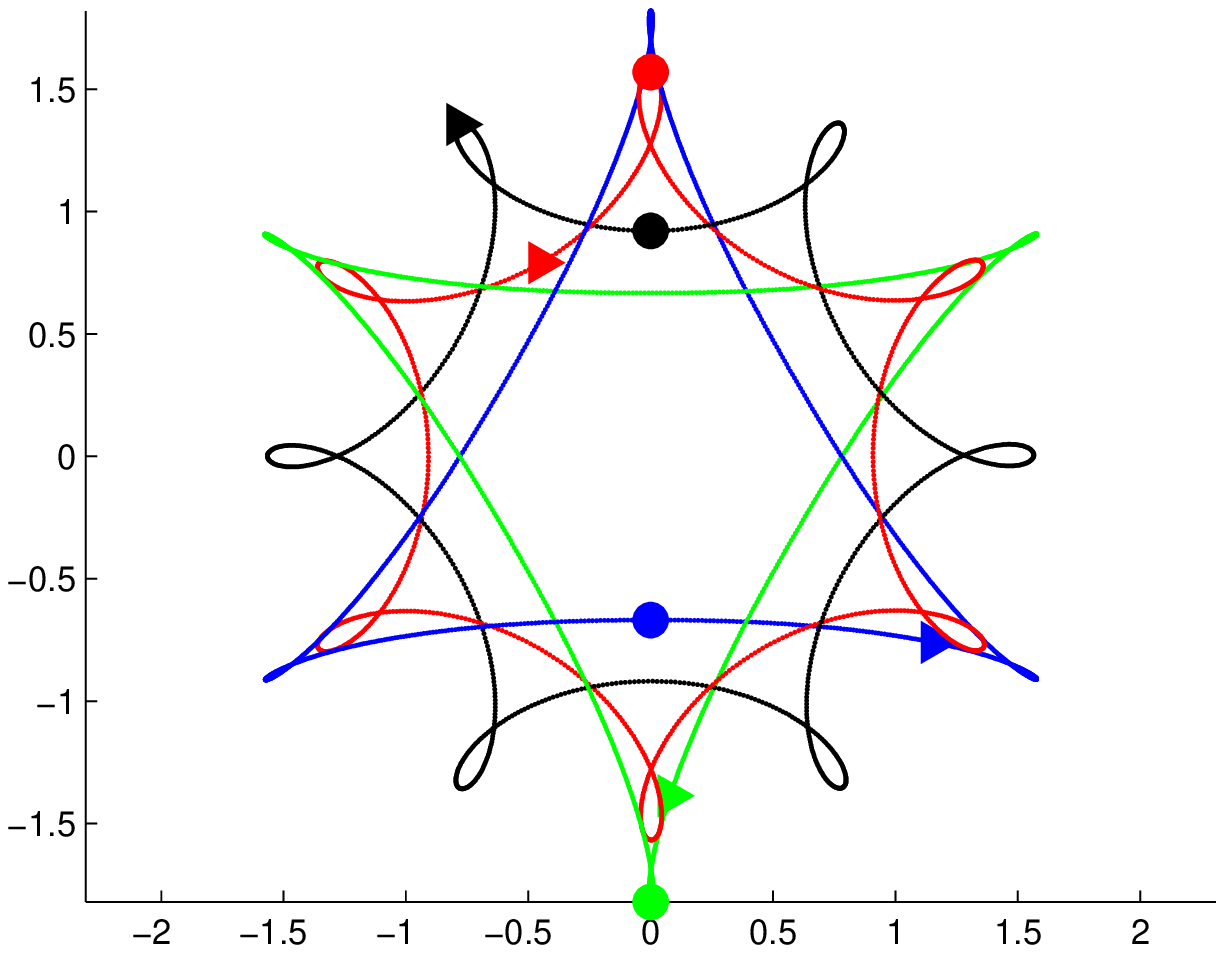}
\includegraphics[height=4.3cm,width=.32\textwidth]{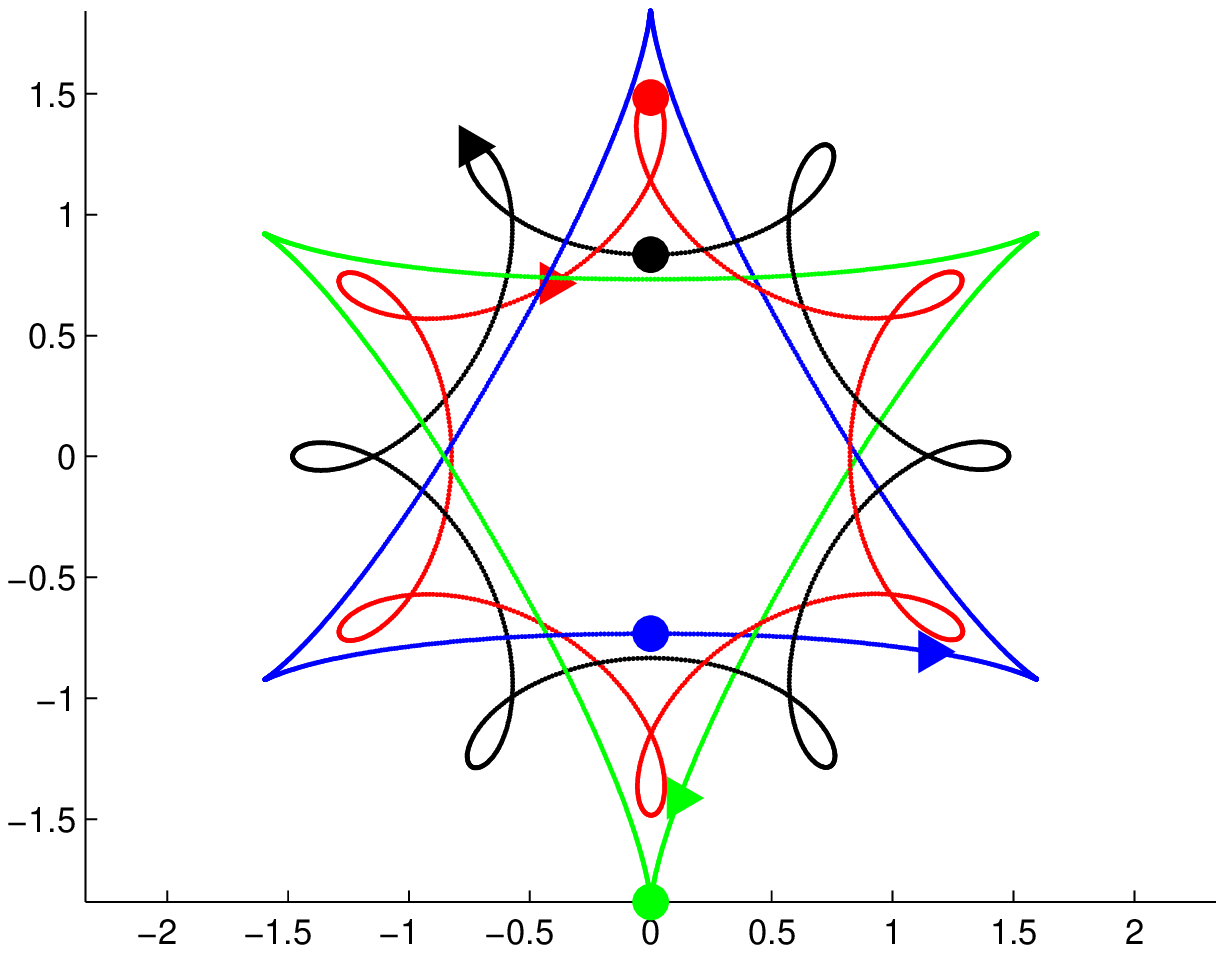}
\caption{\small Non-choreographic Periodic Solutions for $\theta=\frac{5\pi}{6}$ with different masses in Example \ref{ex1}. }
\label{f2ex1}\end{figure}
%E1P3A5f6; Phi=5pi/6, variation is 5.565893e+000 [ 1.484158895, 0.6498662248, 1.5674762, 0.6017079122, 1.280731091, 0.8239182188]
 %[ 0,   1.484158895, 0,   0.2788451039,   0.00001128617652, 0, 1.0]
%[ 0,  0.8342926707, 0,   -1.506304892, -0.000005616866318, 0, 1.0]
%[ 0, -0.7331835298, 0,    1.355128941, -0.000003945132658, 0, 0.9]
%[ 0,  -1.842873766, 0, 0.008715268865, -0.000002355485734, 0, 0.9]
%E1P2A5f6; Phi=5pi/6, variation is 5.400682e+000   [ 1.602294054, 0.6258044347, 1.558897173, 0.621823344, 1.153389197, 0.9643712756]
%[ 0,   1.602294054, 0,   0.1818228921, -0.000001360966039, 0, 0.9]
%[ 0,  0.9764896194, 0,    -1.54781934, -0.000006390259134, 0, 0.9]
%[ 0, -0.5824075539, 0,    1.306124192, -0.000008962413556, 0, 1.0]
%[ 0,  -1.738497752, 0, -0.07672738936,   0.00001593867074, 0, 1.0]
%E1P1A5f6; Phi=5pi/6, variation is 5.972134e+000 [ 1.56947853, 0.6486931907, 1.589689236, 0.6223807227, 1.237825127, 0.9093622557]
  %0    1.5695         0    0.2349    0.0007         0    1.0000
 %        0    0.9208         0   -1.5537   -0.0005         0    1.0000
%         0   -0.6689         0    1.3537    0.0000         0    1.0000
 %        0   -1.8214         0   -0.0349   -0.0002         0    1.0000

%%%%%%%%%%%%%%%%%%%%%%%%%%%%%%%%%%%%%%%%%%%%%%%%%%%%%%%%%%%%%%%%%%%%%%%

{\it Acknowledgements.} This work was partially supported by a grant from the Simons Foundation ($\#$278445 to Zhifu Xie).


\begin{thebibliography}{00}
\bibitem{AFS} Albouy, Alain; Fu, Yanning; Sun, Shanzhong, Symmetry of Planar
Four-Body Convex Central Configurations. \textsl{Proc. Royal Soc.
A}, \textbf{464} (2008), 1355--1365.

\bibitem{ABT} G.  Arioli, V. Barutello, S. Terracini, A new branch of Mountain Pass solutions for the choreographical 3-body problem. Comm. Math. Phys. 268 (2006), no. 2, 439–-463.

\bibitem{BT} V. Barutello,  S. Terracini, Double choreographical solutions for n-body type problems, Celestial Mechanics and Dynamical Astronomy (2006) 95:67–-80.

    \bibitem{BT2}  V. Barutello, S. Terracini, Action minimizing orbits in the n-body problem with simple choreography constraint. Nonlinearity 17 (2004), no. 6, 2015–-2039.

    \bibitem{BCPS} E.  Barrabés, J. Cors, C. Pinyol,  J.  Soler,  Hip-hop solutions of the 2N-body problem. Celestial Mech. Dynam. Astronom. 95 (2006), no. 1--4, 55–-66.

         \bibitem{Br} R. Broucke, Classification of Periodic Orbits in the Four- and Five-Body Problems, Ann. N.Y. Acad. Sci. 1017: 408–421 (2004).


\bibitem{CA2} A. Chenciner, Action minimizing solutions in the Newtonian n-body problem: from homology to symmetry. Proceedings of the International Congress of Mathematicians (Beijing, 2002). Higher Ed. Press, Beijing, 279–-294, 2002. Erratum. Proceedings of the International Congress of Mathematicians (Beijing, 2002). Higher Ed. Press, Beijing, 651–-653, 2002.

 \bibitem{CM}  A. Chenciner,R. Montgomery,  A remarkable periodic solution of the three body problem in the
case of equal masses. Ann. Math. 152, 881–-901 (2000).

\bibitem{CGMS} A. Chenciner, J. Gerver, R. Montgomery, C. Sim\'o, Simple choreographic motions of $N$ bodies: a preliminary study. Geometry, mechanics, and dynamics, 287–-308, Springer, New York, 2002.

 \bibitem{CV} A. Chenciner, A. Venturelli, Minima of the action integral of the Newtonian problem of four bodies of equal mass in $R^3$: "hip-hop'' orbits, Celestial Mech. Dynam. Astronom. 77 (2000), no. 2, 139–-152.% (2001).

\bibitem{Chen1} K. Chen, Existence and minimizing properties of retrograde orbits to the three-body problem with various choices of masses. Ann. of Math. (2) 167 (2008), no. 2, 325–-348.
 \bibitem{Chen2}    K. Chen,  Variational methods on periodic and quasi-periodic solutions for the N-body problem. Ergodic Theory Dynam. Systems 23 (2003), no. 6, 1691–-1715.
\bibitem{Chen3} K. Chen, Removing Collision Singularities from Action Minimizers for the N-Body Problem with Free Boundaries, Arch. Rational Mech. Anal. 181 (2006) 311–331.
\bibitem{Ouyang1} K. Chen, T. Ouyang, Z. Xia, Action-minimizing periodic and quasi-periodic solutions in the $N$-body problem,   Math. Res. Lett. 19 (2012), no. 2, 483–-497.

\bibitem{DengZhangZhou} C. Deng, S. Zhang, Q. Zhou, Rose solutions with three petals for
planar 4-body problems, Sci. China Math, 2010, 53(12): 3085–-3094

%\bibitem{EL} M. S. ElBialy, Collision singularities in celestial mechanics. SIAM J. Math. Anal. 21, 1563–1593 (1990)

\bibitem{FG} G. Fusco, G.F. Gronchi, P. Negrini, Platonic polyhedra, topological constraints
and periodic solutions of the classical N-body problem, Invent. math. (2011) 185:283–-332.
%DOI 10.1007/s00222-010-0306-3

\bibitem{FT} D. Ferrario, S. Terracini, On the existence of collisionless equivariant
minimizers for the classical n-body problem, Invent. math. 155, 305–-362 (2004).




\bibitem{GW} W. Gordon, A minimizing property of Keplerian orbits, American Journal of Mathematics 99, no. 5, (1977) 961-971.

   % \bibitem{HLS} X. Hu, Y. Long, and S. Sun, Linear stability of elliptic Lagrangian solutions of the classical planar three-body problem via index theory, preprint, arXiv:1206.6162.
\bibitem{KS}    T. Kapela, C. Sim\'o,
Computer assisted proofs for nonsymmetric planar choreographies
and for stability of the Eight, {\em Nonlinearity} (2007), {\bf 20},
1241--1255.

\bibitem{LS} Y. Long and S. Sun,
 Four-body central configurations with some equal masses.  \textsl{Arch. Ration. Mech. Anal.}
 \textbf{162}  (2002),  no. 1, 25--44.
 %\bibitem{YL} Y.  Long, $\omega$-index theory and linear stability of elliptic Lagrangian solutions of the classical three-body problem. Adv. Nonlinear Stud. 12 (2012), no. 4, 799–-818.

\bibitem{MaD} Dana Mackenzie, TRIPLE STAR SYSTEMS MAY DO CRAZY EIGHTS,Science 17 March 2000:
Vol. 287 no. 5460 pp. 1910-1912.

 \bibitem{Ma1} C. Marchal, How the method of minimization of action avoids singularities, Clestial Mechanics and Dynamical Astronomy, 83 (2002) 325--353.

       \bibitem{MHO2} K. Meyer, G. Hall, D. Offin, Introduction to Hamiltonian Dynamical Systems and the N-Body Problem, second edition, Springer, 2009.
 \bibitem{Moor} C. Moore, Braids in Classical Gravity, Physical Review Letters 70 (1993) 3675--3679.

%\bibitem{DO} D. Offin, H. Cabral, Hyperbolicity for symmetric periodic orbits in the isosceles three body problem. Discrete Contin. Dyn. Syst. Ser. S 2 (2009), no. 2, 379–-392.

    \bibitem{RG} G. Roberts, Linear stability ananlysis of the figure-eight orbit in the three-body problem, Ergod. Th. and Dynam. Sys. (2007), 27, 1947--1963.

   \bibitem{OuXie} T. Ouyang, Z. Xie,  A new variational method with SPBC and  many stable choreographic solutions of the Newtonian $4$-body problem, submitted (Available at http://arxiv.org/pdf/1306.0119.pdf).

\bibitem{Sa} D. Saari, The manifold structure for collision and for hyperbolic-parabolic orbits in the n-body
problem, J. Differential Equations, 55 (1984), 300–-329.

\bibitem{Sc} Planetary ballet. Science 294 (2001) Dec. 14, 2255.


\bibitem{SX} J. Shi, Z.  Xie,  Classification of four-body central configurations with three equal
masses, Journal of Mathematical Analysis and
Applications 363 (2010) pp512--524.
 \bibitem{SM} C. Sim\'o, New families of solutions in the $N$-body problems, Proceedings of the third European Congress of Matheamtics, Casacuberta et al. edits, Progress in Mathematics (2001), 201, 101--115.

\bibitem{SH} H.J. Sperling, On the real singularities of the $N$-body problem, J. Reine Angew. Math. 245, (1970), 15--40.
\bibitem{SK} K.F. Sundman, M\'{e}moire sur le probl\`{e}des trois corps. Acta Math. 36, (1913), 105--179.

\bibitem{TE} S. Terracini, On the variational approach to the periodic $n$-body problem, Cel. Mech. Dyn. Ast. 95, 1--4, (2006), 3--25.
\bibitem{TV} S. Terracini, A. Venturelli, Symmetric trajectories for the $2N$-body problem with equal masses, Arch. Rational Mech. Anal. 184, 465--493 (2007).

\bibitem{Va} R.J. Vanderbei, New Orbits for the n-Body Problem. In Proceedings of the Conference on New Trends in Astrodynamics, 2003.

    %\bibitem{VaW} R.J. Vanderbei, http://www.princeton.edu/$\%$7Ervdb/WebGL/OuyangXie3D.html
\end{thebibliography}
\end{document}